\numberwithin{equation}{section}
\theoremstyle{definition}
\numberwithin{equation}{section}
\newcommand{\ncom}{\newcommand}
\ncom{\beq}{\begin{equation}}
\ncom{\eeq}{\end{equation}}
\ncom{\bea}{\begin{eqnarray*}}
\ncom{\eea}{\end{eqnarray*}}
\ncom{\beqa}{\begin{eqnarray}}
\ncom{\eeqa}{\end{eqnarray}}
\ncom{\nno}{\nonumber}
\ncom{\non}{\nonumber}
\ncom{\ds}{\displaystyle}
\ncom{\half}{\frac{1}{2}}
\ncom{\mbx}{\makebox{.25cm}}
\ncom{\hs}{\mbox{\hspace{.25cm}}}
\ncom{\rar}{\rightarrow}
\ncom{\Rar}{\Rightarrow}
\ncom{\noin}{\noindent}
\ncom{\bc}{\begin{center}}
\ncom{\ec}{\end{center}}
\ncom{\sz}{\scriptsize}
\ncom{\rf}{\ref}
\ncom{\s}{\sqrt{2}}
\ncom{\sgm}{\sigma}
\ncom{\Sgm}{\Sigma}
\ncom{\psgm}{\sigma^{\prime}}
\ncom{\dt}{\delta}
\ncom{\Dt}{\Delta}
\ncom{\lmd}{\lambda}
\ncom{\Lmd}{\Lambda}
\ncom{\Th}{\Theta}
\ncom{\e}{\eta}
\ncom{\eps}{\epsilon}
\ncom{\pcc}{\stackrel{P}{>}}
\ncom{\lp}{\stackrel{L_{p}}{>}}
\ncom{\dist}{{\rm\,dist}}
\ncom{\sspan}{{\rm\,span}}
\ncom{\re}{{\rm Re\,}}
\ncom{\im}{{\rm Im\,}}
\ncom{\sgn}{{\rm sgn\,}}
\ncom{\ba}{\begin{array}}
\ncom{\ea}{\end{array}}
\ncom{\hone}{\mbox{\hspace{1em}}}
\ncom{\htwo}{\mbox{\hspace{2em}}}
\ncom{\hthree}{\mbox{\hspace{3em}}}
\ncom{\hfour}{\mbox{\hspace{4em}}}
\ncom{\vone}{\vskip 2ex}
\ncom{\vtwo}{\vskip 4ex}
\ncom{\vonee}{\vskip 1.5ex}
\ncom{\vthree}{\vskip 6ex}
\ncom{\vfour}{\vspace*{8ex}}
\ncom{\norm}{\|\;\;\|}
\ncom{\integ}[4]{\int_{#1}^{#2}\,{#3}\,d{#4}}
\ncom{\vspan}[1]{{{\rm\,span}\{ #1 \}}}
\ncom{\dm}[1]{ {\displaystyle{#1} } }
\ncom{\ri}[1]{{#1} \index{#1}}
\newtheorem{theorem}{\bf Theorem}[section]
\newtheorem{remark}{\bf Remark}[section]
\newtheorem{proposition}{Proposition}[section]
\newtheorem{lemma}{Lemma}[section]
\newtheorem{corollary}{Corollary}[section]
\newtheorem{example}{Example}[section]
\newtheorem{definition}{Definition}[section]
\newtheoremstyle
    {remarkstyle}
    {}
    {11pt}
    {}
    {}
    {\bfseries}
    {:}
    {     }
    {\thmname{#1} \thmnumber{#2} }
\theoremstyle{remarkstyle}
\def\l{{\langle}}
\def\r{\rangle}
\def\eps{\varepsilon}
\begin{document}
\title{Iterated Generalized Counting Process and its Extensions}
\author[Manisha Dhillon]{Manisha Dhillon}
\address{Manisha Dhillon, Department of Mathematics, Indian Institute of Technology Bhilai, Durg 491002, India.}
\email{manishadh@iitbhilai.ac.in}
\author[Kuldeep Kumar Kataria]{Kuldeep Kumar Kataria}
\address{Kuldeep Kumar Kataria, Department of Mathematics, Indian Institute of Technology Bhilai, Durg 491002, India.}
\email{kuldeepk@iitbhilai.ac.in}
\subjclass[2010]{Primary: 60J27; 60G51  Secondary: 60G42; 60G55}
\keywords{generalized counting process; martingale characterization; Bell polynomials; iterated process; long-range dependence property.}
\date{\today}
\begin{abstract}
In this paper, we study the composition of two independent GCPs which we call the iterated generalized counting process (IGCP). Its distributional properties such as the transition probabilities, probability generating function, state probabilities and its corresponding L\'evy measure are obtained. We study some integrals of the IGCP. Also, we study some of its extensions, for example, the compound IGCP, the multivariate IGCP and the $q$-iterated GCP. It is shown that the IGCP and the compound IGCP are identically distributed to a compound GCP which leads to their martingale characterizations. Later, a time-changed version of the IGCP is considered where the time is changed by an inverse stable subordinator. Using its covariance structure, we establish that the time-changed IGCP exhibits long-range dependence property. Moreover, we show that its increment process exhibits short-range dependence property. Also, it is shown that its one-dimensional distributions are not infinitely divisible. Initially, some of its potential real life applications are discussed.
\end{abstract}

\maketitle
\section{Introduction}
In the past two decades, the time-changed processes have attracted the interest of several researchers because of their potential applications across various fields such as finance, biology, hydrology, internet data traffic modeling, \textit{etc}. The  Poisson process time-changed by an independent stable subordinator (see Orsingher and Polito (2012a)) and by its first hitting time (see Beghin and Orsingher (2009), Laskin (2003)) are two extensively studied time-changed  processes. These are known as the space fractional Poisson process and the time fractional Poisson process, respectively. 

Di Crescenzo {\it et al.} (2016) introduced a generalization of the time fractional Poisson process which performs independently $k$ kinds of jumps of amplitude $1,2,\dots,k$ with positive rates $\lambda_1,\lambda_2,\dots,\lambda_k$, respectively. It is known as the generalized fractional counting process (GFCP) and we denote it by $\{M^\alpha(t)\}_{t\geq0}$, $0<\alpha\le1$. Its state probabilities $p^\alpha(n,t)=\mathrm{Pr}\{M^\alpha(t)=n\}$ satisfy the following system of differential equations:
\begin{equation}\label{gfcpdeq}
\frac{\mathrm{d}^{\alpha}}{\mathrm{d}t^{\alpha}}p^{\alpha}(n,t)=-\sum_{j=1}^{k}\lambda_jp^{\alpha}(n,t)+	\sum_{j=1}^{\min\{n,k\}}\lambda_{j}p^{\alpha}(n-j,t),\ \ n\ge0
\end{equation}
with $p^{\alpha}(n,0)=\delta_n(0)$, where $\delta_n$'s are Dirac measures.

The fractional derivative involved in \eqref{gfcpdeq} is the Caputo fractional derivative defined in \eqref{caputo}. Its involvement induces a global memory effect in the system.

The state probabilities of GFCP are given by (see Di Crescenzo \textit{et al.} (2016))
\begin{equation}\label{GFCPPMF}
p^\alpha(n,t)=\sum_{r=0}^{n}\sum_{\substack{x_1+x_2+\dots+x_k=r\\
x_1+2x_2+\dots+kx_k=n}}r!\Big(\prod_{j=1}^{k}\frac{(\lambda_jt^\alpha)^{x_j}}{x_j!}\Big)E_{\alpha,r\alpha+1}^{r+1}(-\lambda t^\alpha),\ n\ge0,
\end{equation}
where $x_j$'s are non-negative integers and $E_{\alpha,r\alpha+1}^{r+1}(\cdot)$ is the three-parameter Mittag-Leffler function defined in \eqref{mitag}. Also, its mean is given by 
\begin{equation}
	\mathbb{E}(M^\alpha (t))=\sum_{j=1}^{k}j\lambda_j\frac{t^\alpha}{\Gamma(\alpha+1)}\label{meangfcp}
	\end{equation}
and its variance is given by
\begin{equation}
 \operatorname{Var}(M^\alpha(t))=\Big(\sum_{j=1}^{k}j\lambda_jt^\alpha\Big)^2\left(\frac{2}{\Gamma(2\alpha+1)}-\frac{1}{\Gamma^2(\alpha+1)}\right)+\sum_{j=1}^{k}j^{2}\lambda_j\frac{t^\alpha}{\Gamma(\alpha+1)}.\label{vargfcp}
\end{equation}

 For $\alpha=1$, the GFCP reduces to the generalized counting process (GCP), denoted by $\{M(t)\}_{t\ge0}$. Its transition probabilities are given by 
\begin{equation*}
	\mathrm{Pr}\{M(h)=j\}=\begin{cases}
		1-\sum_{j=1}^{k}\lambda_j h+o(h), \ j=0,\\
		\lambda_j h+o(h),\ 1\leq j\leq k,\\
		o(h), \ j\geq k+1,
	\end{cases}
\end{equation*}	
where $o(h)/h\to0 $ as $h\to0$. 

 For $k=1$, the GFCP and the GCP reduces to the time fractional Poisson process (see Mainardi (2004), Beghin and Orsingher (2009), Meerschaert {\it et al.} (2011)) and the homogeneous Poisson process, respectively. Recently, Dhillon and Kataria (2024) give martingale characterizations for the GCP and its time-changed variants. For additional properties of the GCP and its application in risk theory, we refer
 the reader to Kataria and Khandakar (2022). 

 Bochner (1955) introduced the concept of composition of independent processes. Orsingher and Polito (2012b) studied the compositions of
two independent Poisson processes and Di Crescenzo \textit{et al.} (2015) studied a compound Poisson process time-changed by an independent Poisson subordinator. They studied the first-crossing time problem through various types of
boundaries of the iterated Poisson process. Recently, Beghin and Orsingher (2016) introduced and studied the iterated birth process where they  considered the linear birth processes,
linear death processes and sublinear death processes time-changed by Poisson subordinator. For some recent works on the composition of independent processes, we refer the reader to Buchak and Sakhno (2017), Meoli (2023), and references therein. 

\subsection{Potential applications}
Here, we discuss some potential real life applications of the iterated processes studied in this paper.\\
\noindent (i) In a stock market, a trader can buy multiple stocks simultaneously, and also can sell multiple stocks simultaneously. Consider a trader who sells stocks according to the GCP $\{M(t)\}_{t\ge0}$, where $M(t)$ denotes the number of stocks sold by time $t$. Also, the trader buys stocks according the GCP $\{M_{0}(t)\}_{t\ge0}$ which is independent of $\{M(t)\}_{t\ge0}$. Here, $M_0(t)$ denotes the number of stocks purchased by time $t$. Then, the iterated process $\{M(M_0(t))\}_{t\ge0}$ represents the selling of stocks based on the number of stocks purchased by the trader.\\
\noindent (ii)  In oncology, tumor cells often divide uncontrollably and multiple cells may undergo mitosis simultaneously. Suppose a patient diagnosed with cancer is under treatment, and let the growth of cancer cells inside the body of patient is modeled by the GCP $\{M_0(t)\}_{t\ge0}$. During the treatment, the tumor cells get killed according to the GCP $\{M(t)\}_{t\ge0}$ which is independent of $\{M_0(t)\}_{t\ge0}$. Let $Y_1$ denote the severity of regrowth of tumor cells. Then, the compound process $\{\sum_{i=1}^{M(M_0(t))}Y_i\}_{t\ge0}$ models the damage inflicted by the tumor on the patient where $Y_i$'s are iid random variables independent of $\{M(M_0(t))\}_{t\ge0}$.

In this paper, we introduce and study the iterated generalized counting process (IGCP), which is obtained by the composition of two independent GCPs. First, we give several distributional results related to the IGCP. We derive explicit expressions for its probability generating function (pgf) and obtain its state probabilities in terms of Bell polynomials. Also, we derive the system of differential equations that governs  the state probabilities of IGCP. Moreover, we obtain its associated L\'evy measure and calculate mean, variance, and the distribution of first-passage time. It is shown that the IGCP is identically distributed to a compound GCP which leads to its martingale characterization. Also, we study a fractional integral of the IGCP.  Further, a non-homogeneous version of the IGCP is considered where the time-changing component is a non-homogeneous GCP. 

Then, we study some extended versions of the IGCP that includes the compound version, the multivariate version and the $q$-iterated GCP. Some of their distributional properties such as pgf, pmf, mean, variance, \textit{etc.} are obtained. We establish that the compound IGCP is equal in distribution to a compound GCP which gives a martingale result for the compound IGCP. It is shown that the multivariate IGCP is a L\'evy process and its corresponding L\'evy measure is derived. Later, a time-changed variant of the IGCP is discussed where time-changing component is an inverse stable subordinator. We establish that the time-changed IGCP exhibits long-range dependence (LRD) property and its increment process exhibits short-range dependence (SRD) property. It is shown that its one-dimensional distributions are not infinitely divisible.

\section{Preliminaries}
In this section, we give some known results and definitions that will be used in this paper.
{Here, $\mathbb{R}$ and $\mathbb{N}_0$ denote the set of real numbers and non-negative integers, respectively.}
\begin{definition}
 Let $f$ and $g$ be two positive functions. The function $f(t)$ is said to be asymptotically equal to $g(t)$ if $\lim_{t\to \infty} f(t)/g(t)=1$. It is denoted by $f(t)\sim g(t)$ as $t\to \infty$.
\end{definition}
\subsection{Bell polynomial}
The $n$th order Bell polynomial is defined as (see Comtet (1974))
\begin{equation}\label{bell}
	\mathcal{B}_n(x)=e^{-x}\sum_{r=0}^{\infty}\frac{r^nx^r}{r!}, \ n\ge 0.
\end{equation}
\subsection{Mittag-Leffler function}
The three-parameter Mittag-Leffler function is defined as (see Kilbas {\it et al.} (2006), p. 45)
\begin{equation}\label{mitag}
	E_{\alpha,\beta}^{\delta}(x)=\frac{1}{\Gamma(\delta)}\sum_{j=0}^{\infty} \frac{\Gamma(j+\delta)x^{j}}{j!\Gamma(j\alpha+\beta)},\ \ x\in\mathbb{R},
\end{equation}
where $\alpha>0$, $\beta>0$ and $\delta>0$. 

It reduces to two-parameter Mittag-Leffler function for $\delta=1$. Further, it reduces to the Mittag-Leffler function for  $\delta=\beta=1$. 

Let $g(t)=t^{\beta-1}E^{\delta}_{\alpha,\beta}(xt^{\alpha})$. Then, for $x\in\mathbb{R}$, the following result holds true (see Kilbas {\it et al.} (2006), Eq. (1.9.13)):
\begin{equation}\label{mi}
	\tilde{g}(s)=\frac{s^{\alpha\delta-\beta}}{(s^{\alpha}-x)^{\delta}},\ s>|x|^{1/\alpha},
\end{equation} 
where $\tilde{g}(s)$ denotes the Laplace transform of the function $g(t)$. 
\subsection{Caputo fractional derivative} The Caputo fractional derivative of the function $f(t)$ is defined as (see Kilbas {\it et al.} (2006))
\begin{equation}\label{caputo}
	\frac{\mathrm{d}^{\alpha}}{\mathrm{d}t^{\alpha}}f(t)=\left\{
	\begin{array}{ll}
		\dfrac{1}{\Gamma{(1-\alpha)}}\displaystyle\int^t_{0} (t-s)^{-\alpha}f'(s)\,\mathrm{d}s,\  0<\alpha<1,\vspace{.2cm}\\	f'(t),\ \alpha=1.
	\end{array}
	\right.
\end{equation}
Its Laplace transform is given by (see Kilbas {\it et al.} (2006), Eq. (5.3.3))
\begin{equation*}
	\tilde{h}(s)=s^{\alpha}\tilde{f}(s)-s^{\alpha-1}f(0),\ \ s>0,
\end{equation*}
where $h(t)=\frac{\mathrm{d}^{\alpha}}{\mathrm{d}t^{\alpha}}f(t)$.
\subsection{Inverse $\alpha$-stable subordinator} A $\alpha$-stable subordinator $\{D^\alpha(t)\}_{t\geq0}$, $0<\alpha<1$ is a non-decreasing L\'evy process. Its Laplace transform is given by $\mathbb{E}(e^{-sD^\alpha(t)})=e^{-ts^\alpha},\, s>0$. Its first passage time $\{Y^\alpha(t)\}_{t\ge0}$ is known as the inverse $\alpha$-stable subordinator and it is defined as
\begin{equation*}
Y^\alpha(t)\coloneqq\inf\{x>0:D^\alpha(x)>t\}.
\end{equation*} 
Its mean and variance are given by (see Leonenko \textit{et al.} (2014))
\begin{equation}\label{meanvarinv}
\mathbb{E}(Y^\alpha(t))= \frac{t^\alpha}{\Gamma(1+\alpha)}\ \ \text{and}\ \ \operatorname{Var}(Y^\alpha(t))=t^{2\alpha}\Big(\frac{2}{\Gamma(2\alpha+1)}-\frac{1}{\Gamma^2(\alpha+1)}\Big),
\end{equation}
respectively.

For fixed $s$ and large $t$, the following asymptotic result holds (see Kataria and Khandakar (2022), Eq. (11)):
\begin{equation}\label{covinv}
\operatorname{Cov}(Y^\alpha(s),Y^\alpha(t))\sim\frac{1}{\Gamma^2(\alpha+1)}\bigg(\alpha s^{2\alpha}B(\alpha, \alpha+1)-\frac{\alpha^2}{(\alpha+1)}\frac{s^{\alpha+1}}{t^{1-\alpha}}\bigg),
\end{equation}
where $B(\alpha,\alpha+1)$ denotes the beta function.

\subsection{Generalized counting process and its compound version}
Here, we give some known results for the GCP and its compound version (see Di Crescenzo {\it et al.} (2016) and Kataria and Khandakar (2022)). 

For each $n\geq0$, the state probability $p(n,t)=\mathrm{Pr}\{M(t)=n\}$ of GCP is given by
\begin{equation}\label{p(n,t)}
	p(n,t)=\sum_{\Omega(k,n)}\prod_{j=1}^{k}\frac{(\lambda_jt)^{x_j}}{x_j!}e^{-\lambda_j t},
\end{equation}
where $\Omega(k,n)=\{(x_1,x_2,\ldots,x_k):\sum_{j=1}^{k}jx_j=n,\ x_j\in \mathbb{N}_0\}$. 

Its state probabilities $p(n,t)$ satisfy the following system of differential equations:
\begin{equation*}
	\frac{\mathrm{d}}{\mathrm{d}t}p(n,t)=-\sum_{j=1}^{k}\lambda_j p(n,t)+\sum_{j=1}^{\min\{n,k\}}\lambda_j p(n-j,t), \ n\geq0,
\end{equation*}
with initial condition $p(n,0)=\delta_n(0)$. 

The L\'evy measure of GCP is given by $\Pi(\mathrm{d}x)=\sum_{j=1}^{k}\lambda_j\delta_j(\mathrm{d}x)$, where $\delta_j$'s are Dirac measures. Its pgf and moment generating function are given by
\begin{equation}\label{pgfmt}
	G(u,t)=\mathbb{E}\left(u^{M(t)}\right)=\exp\bigg(-t\sum_{j=1}^{k}\lambda_{j}(1-u^{j})\bigg),\   |u|\le 1.
\end{equation}
and 
\begin{equation}\label{mgfmt}
	\mathbb{E}\left(e^{uM(t)}\right)=\exp\bigg(-t\sum_{j=1}^{k}\lambda_{j}(1-e^{uj})\bigg),\  u\in \mathbb{R},
\end{equation}
respectively. Also, its mean and variance are 
\begin{equation}\label{covgcp}
\mathbb{E}(M(t))=\sum_{j=1}^{k}j\lambda_jt\ \ \text{and} \ \  \operatorname{Var}(M(t))=\sum_{j=1}^{k}j^2\lambda_jt,
\end{equation}
respectively.

\section{Iterated generalized counting process}\label{secigcp}
In this section, we introduce and study a counting process that is formed by the composition of two independent GCPs. We call it the iterated generalized counting process (IGCP) and denote it by $\{\hat{M}(t)\}_{t\ge0}$. It is defined as 
\begin{equation}\label{IGCPrep}
\hat{M}(t)\coloneqq M(M_0(t)), \, t\ge0,
\end{equation} 
where $\{M_0(t)\}_{t\ge0}$ and $\{M(t)\}_{t\ge0}$ are independent GCPs with positive rates $\mu_{1}$, $\mu_2$, $\dots$, $\mu_{k_0}$ and $\lambda_{1}$, $\lambda_2$, $\dots$, $\lambda_k$, respectively. 

Let $\Omega(k,m)=\{(x_1,x_2,\dots,x_{k}):x_1+2x_2+\dots+kx_k=m,\,x_{j}\in\mathbb{N}_0\}$,  $\lambda=\lambda_{1}+\lambda_2+\dots+\lambda_k$, $\mu=\mu_1+\mu_2+\dots+\mu_{k_0}$ and $z_k=x_1+x_2+\dots+x_k$.
 
In an infinitesimal time interval of length $h$ such that $o(h)/h\to0$ as $h\to0$, the transition probabilities of IGCP are given by
{\footnotesize\begin{equation}\label{transigcp}
\mathrm{Pr}\{\hat{M}(t+h)=n+m|\hat{M}(t)=n\}=\begin{cases}
	1-h\mu+h\sum_{j_0=1}^{k_0}\mu_{j_0}e^{-j_0\lambda}+o(h),\  m=0,\vspace{.2cm}\\	
	h\sum_{j_0=1}^{k_0}\mu_{j_0}\displaystyle\sum_{\Omega(k,m)}\prod_{j=1}^{k}\frac{(j_0\lambda_j)^{x_j}}{x_j!}e^{-j_0\lambda_j}+o(h),\  m>0.
\end{cases}
\end{equation}}
\begin{remark}
For $k=k_0=1$, the IGCP reduces to the iterated Poisson process studied by Orsingher and Polito (2012b).
\end{remark}
For $|u|\le1$, the pgf of IGCP can be obtained as follows:
\begin{align}
\hat{G}(u,t)=\mathbb{E}(u^{\hat{M}(t)})&=\mathbb{E}(\mathbb{E}(u^{M(M_0(t))}|M_0(t)))\nonumber\\
&=\mathbb{E}\Big(\exp\Big(-M_0(t)\sum_{j=1}^{k}\lambda_{j}(1-u^{j})\Big)\Big)\nonumber\\
&=\exp\Big(-\sum_{j_0=1}^{k_0}\mu_{j_0}t\Big(1-\exp\Big(-j_0\sum_{j=1}^{k}\lambda_{j}(1-u^{j})\Big)\Big)\Big).\label{IGDCpgf}
\end{align}
Thus, the governing system of differential equations for $\hat{G}(u,t)$ is given by
\begin{equation}\label{Ipgfde}
\frac{\partial}{\partial t}\hat{G}(u,t)=-\sum_{j_0=1}^{k_0}\mu_{j_0}\Big(1-\exp\Big(-j_0\sum_{j=1}^{k}\lambda_{j}(1-u^{j})\Big)\Big)\hat{G}(u,t),\ \hat{G}(u,0)=1. 
\end{equation}
Similarly, its moment generating function can be obtained in the following form:
\begin{equation}\label{mgfIGCP}
\mathbb{E}(e^{u\hat{M}(t)})=\exp\Big(-\sum_{j_0=1}^{k_0}\mu_{j_0}t\Big(1-\exp\Big(-j_0\sum_{j=1}^{k}\lambda_{j}(1-e^{uj})\Big)\Big)\Big), \ \ u\in\mathbb{R}. 
\end{equation}

\begin{remark}
By substituting $k=k_0=1$ into \eqref{IGDCpgf}, the pgf of IGCP reduces to that of iterated Poisson process given in Eq. (22) of Orsingher and Polito (2012b). 
\end{remark}
 Dhillon and Kataria (2024) showed that the GCP has a unique representation as the weighted sum of independent Poisson processes, that is,
 \begin{equation*}
 M(t)=\sum_{j=1}^{k}jN_j(t),\ t\ge0,
 \end{equation*} 
where $\{N_j(t)\}_{t\ge0}$, $j=1,2,\dots,k$ are independent Poisson processes with positive rates $\lambda_j$, respectively. So, the following result holds true for IGCP: 
\begin{equation*}
\hat{M}(t)=\sum_{j=1}^{k}jN_{j}(M_0(t)),
\end{equation*}
where $\{M_0(t)\}_{t\ge0}$ is independent of $\{N_{j}(t)\}_{t\ge0}$, $j=1,2,\dots,k$.

 In the following result, we show that the IGCP is equal in distribution to a compound GCP.

\begin{proposition}\label{prpcgcp}
Let $X_1$, $X_2$, $\dots$ be independent and identically distributed (iid) random variables, such that, $X_1\overset{d}{=}M(1)$. Then,
\begin{equation*}
\hat{M}(t)\overset{d}{=}\sum_{n=1}^{M_0(t)}X_n,\ t\ge0.
\end{equation*}
\end{proposition}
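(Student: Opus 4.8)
The plan is to establish the identity in distribution by comparing probability generating functions (pgfs), relying on the fact that the pgf of a non-negative integer-valued random variable determines its law uniquely on $|u|\le 1$. Since the pgf $\hat G(u,t)$ of $\hat M(t)$ has already been computed in \eqref{IGDCpgf}, it suffices to compute the pgf of the compound sum $S(t)\coloneqq\sum_{n=1}^{M_0(t)}X_n$ and verify that the two expressions coincide for every fixed $t$ and all $|u|\le 1$.

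First I would condition on the outer process $M_0(t)$. Using that $\{X_n\}$ and $\{M_0(t)\}_{t\ge0}$ are independent and that the $X_n$ are iid, conditioning on $M_0(t)$ gives $\mathbb{E}(u^{S(t)}\mid M_0(t))=(\mathbb{E}(u^{X_1}))^{M_0(t)}$, and hence
\begin{equation*}
\mathbb{E}(u^{S(t)})=\mathbb{E}\big((\mathbb{E}(u^{X_1}))^{M_0(t)}\big).
\end{equation*}
Because $X_1\overset{d}{=}M(1)$, the inner factor is the pgf of the GCP $M$ evaluated at time $1$; from \eqref{pgfmt},
\begin{equation*}
\mathbb{E}(u^{X_1})=G(u,1)=\exp\Big(-\sum_{j=1}^{k}\lambda_j(1-u^j)\Big).
\end{equation*}

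Next I would recognize the outer expectation as the pgf of the GCP $M_0$, which has rates $\mu_1,\dots,\mu_{k_0}$, evaluated at the argument $G(u,1)$. Applying \eqref{pgfmt} to $M_0$ and then substituting $G(u,1)^{j_0}=\exp(-j_0\sum_{j=1}^{k}\lambda_j(1-u^j))$ yields
\begin{equation*}
\mathbb{E}(u^{S(t)})=\exp\Big(-t\sum_{j_0=1}^{k_0}\mu_{j_0}\big(1-G(u,1)^{j_0}\big)\Big)=\exp\Big(-\sum_{j_0=1}^{k_0}\mu_{j_0}t\Big(1-\exp\Big(-j_0\sum_{j=1}^{k}\lambda_j(1-u^j)\Big)\Big)\Big),
\end{equation*}
which is precisely $\hat G(u,t)$ as given in \eqref{IGDCpgf}.

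There is no genuine obstacle in this argument: it consists of a single conditioning step followed by the algebraic substitution of $G(u,1)^{j_0}$ into the pgf of $M_0$. The only point requiring care is the final invocation of the uniqueness theorem for pgfs on the closed disc $|u|\le 1$, which is what licenses the passage from equality of generating functions to equality in distribution for each fixed $t\ge 0$, thereby completing the proof.
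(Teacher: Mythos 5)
Your proof is correct and follows essentially the same route as the paper: condition on $M_0(t)$, use the iid structure to write the conditional pgf as $(\mathbb{E}(u^{X_1}))^{M_0(t)}$ with $\mathbb{E}(u^{X_1})$ given by \eqref{pgfmt} at $t=1$, apply the pgf of $M_0$, and match the result with \eqref{IGDCpgf}. Your explicit appeal to the uniqueness of pgfs on $|u|\le 1$ is a point the paper leaves implicit, but it does not constitute a different argument.
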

\begin{proof}
Let $Y(t)=X_1+X_2+\dots+X_{M_0(t)}$. Then, its pgf can be obtained as follows:
\begin{align*}
G_Y(u,t)&=\mathbb{E}(\mathbb{E}(u^{Y(t)}|M_0(t)))\\
&=\mathbb{E}\Big(\prod_{n=1}^{M_0(t)}G_{X_n}(u,t)\Big),\, \text{(as $X_n$'s are independent)}\\
&=\mathbb{E}\Big(\Big(\exp\Big(-\sum_{j=1}^{k}\lambda_{j}(1-u^{j})\Big)\Big)^{M_0(t)}\Big),\ \text{(using \eqref{pgfmt})}\\
&=\exp\Big(-\sum_{j_0=1}^{k_0}\mu_{j_0}t\Big(1-\exp\Big(-j_0\sum_{j=1}^{k}\lambda_{j}(1-u^{j})\Big)\Big)\Big),\ |u|\le1
\end{align*}
which coincides with \eqref{IGDCpgf}. This completes the proof.
\end{proof}
\begin{remark}
For $k=k_0=1$, the result in Proposition \ref{prpcgcp} reduces to that of iterated Poisson process (see Orsingher and Polito (2012b), Eq. (23)).
\end{remark}
\begin{proposition}\label{prpde}
The state probabilities $\hat{p}(n,t)=\mathrm{Pr}\{\hat{M}(t)=n\}$, $n\ge0$ of IGCP satisfy the following system of difference-differential equations:
\begin{equation}\label{IGCPDE}
\frac{\mathrm{d}}{\mathrm{d}t}\hat{p}(n,t)=-\mu\hat{p}(n,t)+\sum_{j_0=1}^{k_0}\mu_{j_0}e^{-j_0\lambda}\sum_{m=0}^{n}\sum_{\Omega(k,m)}\Big(\prod_{j=1}^{k}\frac{(j_0\lambda_{j})^{x_{j}}}{x_{j}!}\Big)\hat{p}(n-m,t)  
\end{equation}
with initial condition $\hat{p}(n,0)=\delta_n(0)$. 
\end{proposition}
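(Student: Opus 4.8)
The plan is to read the recursion for the state probabilities directly off the pgf differential equation \eqref{Ipgfde}, by expanding both sides as power series in $u$ and matching the coefficient of $u^n$. The starting point is that $\hat{G}(u,t)=\sum_{n\ge0}\hat{p}(n,t)u^n$, so its $t$-derivative is $\sum_{n\ge0}\frac{\mathrm{d}}{\mathrm{d}t}\hat{p}(n,t)u^n$, term-by-term differentiation being legitimate for $|u|\le1$.

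First I would rewrite the multiplier on the right-hand side of \eqref{Ipgfde} in a form whose power-series coefficients are transparent. Splitting off the constant gives
\begin{equation*}
-\sum_{j_0=1}^{k_0}\mu_{j_0}\Big(1-\exp\Big(-j_0\sum_{j=1}^{k}\lambda_j(1-u^j)\Big)\Big)=-\mu+\sum_{j_0=1}^{k_0}\mu_{j_0}e^{-j_0\lambda}\exp\Big(j_0\sum_{j=1}^{k}\lambda_ju^j\Big),
\end{equation*}
using $\sum_{j_0}\mu_{j_0}=\mu$ and $\sum_j\lambda_j=\lambda$. The key step is then to expand the factor $\exp(j_0\sum_j\lambda_ju^j)=\prod_{j=1}^k e^{j_0\lambda_ju^j}$ as a product of exponential series and collect powers of $u$; the coefficient of $u^m$ is exactly $\sum_{\Omega(k,m)}\prod_{j=1}^k (j_0\lambda_j)^{x_j}/x_j!$, since a monomial $u^{x_1+2x_2+\cdots+kx_k}$ feeds into $u^m$ precisely when $(x_1,\dots,x_k)\in\Omega(k,m)$.

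Next I would substitute this expansion together with $\hat{G}(u,t)=\sum_{l\ge0}\hat{p}(l,t)u^l$ into \eqref{Ipgfde}, form the Cauchy product of the two power series on the right, and extract the coefficient of $u^n$. The $-\mu$ piece contributes $-\mu\hat{p}(n,t)$, while the convolution of the $\exp$-series with the pgf contributes $\sum_{j_0=1}^{k_0}\mu_{j_0}e^{-j_0\lambda}\sum_{m=0}^{n}\sum_{\Omega(k,m)}(\prod_{j=1}^k(j_0\lambda_j)^{x_j}/x_j!)\,\hat{p}(n-m,t)$, the upper limit $m\le n$ coming from the requirement $n-m\ge0$. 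Equating coefficients of $u^n$ then yields \eqref{IGCPDE}, and the initial condition $\hat{p}(n,0)=\delta_n(0)$ follows from $\hat{M}(0)=M(M_0(0))=0$ almost surely.

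The main obstacle is bookkeeping rather than anything deep: one must justify interchanging differentiation and summation and the term-by-term multiplication of the two series, and must correctly identify the $u^m$-coefficient of the multinomial expansion in terms of $\Omega(k,m)$ — in particular checking that the $m=0$ term, for which $\Omega(k,0)=\{\mathbf{0}\}$ gives coefficient $1$, is retained inside the sum and kept separate from the $-\mu\hat{p}(n,t)$ term. As an alternative avoiding generating functions, one could instead apply the Chapman--Kolmogorov relation $\hat{p}(n,t+h)=\sum_{m=0}^n \mathrm{Pr}\{\hat{M}(t+h)=n\mid \hat{M}(t)=n-m\}\,\hat{p}(n-m,t)$ using the transition probabilities \eqref{transigcp}, subtract $\hat{p}(n,t)$, divide by $h$, and let $h\to0$; the $o(h)$ terms vanish in the limit and the same equation \eqref{IGCPDE} emerges.
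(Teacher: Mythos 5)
Your proposal is correct and takes essentially the same route as the paper: your main argument---expanding the multiplier in \eqref{Ipgfde} as $-\mu+\sum_{j_0=1}^{k_0}\mu_{j_0}e^{-j_0\lambda}\exp\big(j_0\sum_{j=1}^{k}\lambda_ju^j\big)$, identifying the $u^m$-coefficient via $\Omega(k,m)$, forming the Cauchy product with $\hat{G}(u,t)$, and matching coefficients of $u^n$---is precisely the paper's alternate proof in Appendix A1. The Chapman--Kolmogorov argument you sketch as a fallback, using the transition probabilities \eqref{transigcp} and letting $h\to0$, is exactly the paper's main proof, so both of your routes are accounted for and no step is missing.
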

\begin{proof}
Note that
\begin{align*}
\hat{p}(n,t+h)&=\hat{p}(n,t)\hat{p}(0,h)+\sum_{m=1}^{n}\hat{p}(n-m,t)\hat{p}(m,h)+o(h).
\end{align*}
By using \eqref{transigcp}, we get
\begin{align*}
\frac{\hat{p}(n,t+h)-\hat{p}(n,t)}{h}&=-\mu \hat{p}(n,t)+\sum_{j_0=1}^{k_0}\mu_{j_0}\sum_{m=0}^{n}\sum_{\Omega(k,m)}\Big(\prod_{j=1}^{k}\frac{(j_0\lambda_j)^{x_j}}{x_j!}e^{-j_0\lambda_j}\Big)\hat{p}(n-m,t)+\frac{o(h)}{h}.
\end{align*}
On letting $h\to0$, we get the required result.
For an alternate proof, we refer the reader to Appendix A1.
\end{proof}
\begin{remark}
For $k=k_0=1$, the result in Proposition \ref{prpde} reduces to that of iterated Poisson process (see Orsingher and Polito (2012b), Eq. (27)).
\end{remark}
Next, we obtain the state probabilities of IGCP in terms of Bell polynomials.
\begin{theorem}\label{thmpmf}
The state probabilities of IGCP are given by
\begin{equation}\label{IGpmf}
\hat{p}(n,t)=\sum_{\Omega(k,n)}\Big(\prod_{j=1}^{k}\frac{\lambda_{j}^{n_{j}}}{n_{j}!}\Big)\sum_{\sum_{j_0=1}^{k_0}r_{j_0}=z_{k}}z_{k}!\prod_{j_0=1}^{k_0}\frac{j_0^{r_{j_0}}}{r_{j_0}!}e^{-\mu_{j_0}t(1-e^{-j_0\lambda})}\mathcal{B}_{r_{j_0}}(e^{-j_0\lambda}\mu_{j_0}t),\ n\ge0,
\end{equation}
where $\mathcal{B}_n(x)$ is the $n$th order Bell polynomial defined in \eqref{bell}.
\end{theorem}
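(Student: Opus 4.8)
The plan is to read off $\hat{p}(n,t)$ as the coefficient of $u^n$ in the power series expansion of the probability generating function $\hat{G}(u,t)$ obtained in \eqref{IGDCpgf}, exploiting the fact that its exponent is a finite sum over $j_0$, so that $\hat{G}(u,t)=\prod_{j_0=1}^{k_0}\exp(-\mu_{j_0}t(1-e^{-j_0 f(u)}))$ with $f(u)=\sum_{j=1}^{k}\lambda_j(1-u^j)$. Writing $\lambda=\sum_{j=1}^{k}\lambda_j$ and $w=\sum_{j=1}^{k}\lambda_j u^j$, we have $f(u)=\lambda-w$, so each factor equals $e^{-\mu_{j_0}t}\exp(\mu_{j_0}t\,e^{-j_0\lambda}e^{j_0 w})$.

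The key step is to linearize the nested exponential through the Bell polynomials. From the definition \eqref{bell}, interchanging the two sums and summing the inner exponential series yields the identity $\exp(c\,e^{y})=e^{c}\sum_{r=0}^{\infty}\mathcal{B}_r(c)\,y^{r}/r!$, valid for all $c,y$. Applying this with $c=\mu_{j_0}t\,e^{-j_0\lambda}$ and $y=j_0 w$, and using $e^{-\mu_{j_0}t}e^{c}=e^{-\mu_{j_0}t(1-e^{-j_0\lambda})}$, each factor of $\hat{G}(u,t)$ becomes $e^{-\mu_{j_0}t(1-e^{-j_0\lambda})}\sum_{r_{j_0}=0}^{\infty}\mathcal{B}_{r_{j_0}}(\mu_{j_0}t\,e^{-j_0\lambda})\,j_0^{r_{j_0}}w^{r_{j_0}}/r_{j_0}!$.

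Next I would multiply these $k_0$ factors and collect terms by the total power of $w$. Setting $z_k=\sum_{j_0=1}^{k_0}r_{j_0}$, the product becomes $\prod_{j_0=1}^{k_0}e^{-\mu_{j_0}t(1-e^{-j_0\lambda})}$ times $\sum_{z_k=0}^{\infty}w^{z_k}\sum_{\sum_{j_0}r_{j_0}=z_k}\prod_{j_0=1}^{k_0}(j_0^{r_{j_0}}/r_{j_0}!)\,\mathcal{B}_{r_{j_0}}(\mu_{j_0}t\,e^{-j_0\lambda})$. Finally, expanding $w^{z_k}=(\sum_{j=1}^{k}\lambda_j u^j)^{z_k}$ by the multinomial theorem gives $\sum_{n_1+\dots+n_k=z_k}(z_k!/\prod_j n_j!)\prod_j\lambda_j^{n_j}u^{\sum_j j n_j}$, so extracting the coefficient of $u^n$ selects exactly those $(n_1,\dots,n_k)$ with $\sum_j j n_j=n$, i.e. $(n_1,\dots,n_k)\in\Omega(k,n)$, for which automatically $\sum_j n_j=z_k$. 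Rewriting $(z_k!/\prod_j n_j!)\prod_j\lambda_j^{n_j}=z_k!\prod_j(\lambda_j^{n_j}/n_j!)$ and pulling the $r_{j_0}$-independent prefactors $e^{-\mu_{j_0}t(1-e^{-j_0\lambda})}$ back inside the inner sum yields precisely \eqref{IGpmf}.

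I expect the work to be organizational rather than conceptual: keeping the two simplex constraints $\sum_{j_0}r_{j_0}=z_k$ and $\sum_{j}n_j=z_k$ consistent while passing from powers of $w$ to powers of $u$, and arranging the prefactors to match the displayed form of \eqref{IGpmf}. The interchange of the infinite $r_{j_0}$-sums with the finite products and with coefficient extraction is justified by absolute convergence of all series for $|u|\le1$, since $\hat{G}(\cdot,t)$ is an analytic probability generating function there.
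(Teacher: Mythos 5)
Your proof is correct, but it takes a genuinely different route from the paper's. You work on the transform side: starting from the closed-form pgf \eqref{IGDCpgf}, you factor it over $j_0$, expand each factor by the identity $\exp(ce^{y})=e^{c}\sum_{r\ge0}\mathcal{B}_r(c)\,y^{r}/r!$ (the exponential generating function of the Bell polynomials, which does indeed follow from \eqref{bell} by interchanging the two sums), and then extract the coefficient of $u^{n}$ via the multinomial theorem. The paper never touches the pgf in this proof: it conditions on $M_0(t)=s$, writes $\hat{p}(n,t)=\sum_{s\ge0}\mathrm{Pr}\{M(s)=n\}\mathrm{Pr}\{M_0(t)=s\}$, substitutes the explicit pmf \eqref{p(n,t)} for both processes, trades the constrained double sum (over $s$ and over $\Omega(k_0,s)$) for a free sum over the $x_{j_0}$'s, applies the multinomial theorem to $s^{z_k}=\big(\sum_{j_0}j_0x_{j_0}\big)^{z_k}$, and finally recognizes each factored series $\sum_{x\ge0}x^{r}c^{x}/x!=e^{c}\mathcal{B}_{r}(c)$ as a Bell polynomial straight from the definition. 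The two arguments are nearly dual to one another: the sum over $s$ that the paper manipulates is exactly what your use of the closed-form pgf has already performed, and your Bell-polynomial expansion undoes it in the other order. What each buys: your version makes the structural reason for the Bell polynomials transparent (each factor of a compound-Poisson-type pgf is a Bell EGF) and settles all convergence questions at once by analyticity of the pgf on $|u|\le1$; the paper's version needs no coefficient-extraction step, since every rearrangement involves nonnegative terms, and its intermediate expression \eqref{igcppmfep} is reused later in the paper, for instance in computing the conditional mean of the fractional integral of the IGCP.
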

\begin{proof}
From \eqref{IGCPrep}, we have
\begin{align}
\hat{p}(n,t)&=\sum_{s=0}^{\infty}\mathrm{Pr}\{M(s)=n\}\mathrm{Pr}\{M_0(t)=s\}\nonumber\\
&=\sum_{\Omega(k,n)}\Big(\prod_{j=1}^{k}\frac{\lambda_{j}^{n_{j}}}{n_{j}!}\Big)e^{-\mu t}\sum_{s=0}^{\infty}s^{z_{k}}e^{-\lambda s}\sum_{\Omega(k_0,s)}\prod_{j_0=1}^{k_0}\frac{(\mu_{j_0}t)^{x_{j_0}}}{x_{j_0}!}\label{igcppmfep}\\
&=\sum_{\Omega(k,n)}\Big(\prod_{j=1}^{k}\frac{\lambda_{j}^{n_{j}}}{n_{j}!}\Big)e^{-\mu t}\sum_{\substack{x_{j_0}\ge0\\1\le j_0\le k_0}}\Big(\sum_{j_0=1}^{k_0}j_0x_{j_0}\Big)^{z_{k}}\prod_{j_0=1}^{k_0}\frac{(e^{-j_0\lambda}\mu_{j_0}t)^{x_{j_0}}}{x_{j_0}!}\nonumber\\
&=\sum_{\Omega(k,n)}\Big(\prod_{j=1}^{k}\frac{\lambda_{j}^{n_{j}}}{n_{j}!}\Big)e^{-\mu t}\sum_{\sum_{j_0=1}^{k_0}r_{j_0}=z_{k}}z_{k}!\Big(\prod_{j_0=1}^{k_0}\frac{j_0^{r_{j_0}}}{r_{j_0}!}\Big)\sum_{\substack{x_{j_0}\ge0\\1\le j_0\le k_0}}\prod_{j_0=1}^{k_0}\frac{x_{j_0}^{r_{j_0}}(e^{-j_0\lambda}\mu_{j_0}t)^{x_{j_0}}}{x_{j_0}!}.\nonumber
\end{align}
By using \eqref{bell}, we get the required result.
\end{proof}
\begin{remark}
On taking $k=k_0=1$ in \eqref{IGpmf}, we get the state probabilities of iterated Poisson process (see Orsingher and Polito (2012b), Theorem 2.1). 
\end{remark}
Note that the IGCP is a L\'evy process as it is the composition of two independent L\'evy processes.  Its L\'evy measure is obtained in the following result:
\begin{proposition}
The L\'evy measure of IGCP is given by
\begin{equation*}
\Pi_{\hat{M}}(\mathrm{d}x)=\sum_{j_0=1}^{k_0}\mu_{j_0}\sum_{n=1}^{\infty}\sum_{\Omega(k,n)}\Big(\prod_{j=1}^{k}\frac{(j_0\lambda_{j})^{n_{j}}}{n_{j}!}e^{-j_0\lambda_{j}}\Big)\delta_n(\mathrm{d}x),
\end{equation*}
where $\delta_n$'s are Dirac measures.
\end{proposition}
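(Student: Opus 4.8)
The plan is to read the Lévy measure off the pgf in \eqref{IGDCpgf} by casting its logarithm into the canonical subordinator form. Since $\{\hat{M}(t)\}_{t\ge0}$ is a non-decreasing, integer-valued L\'evy process (as noted just before the statement), its L\'evy measure $\Pi_{\hat{M}}$ is supported on the positive integers and carries no drift or Gaussian component, so the pgf must satisfy $\hat{G}(u,t)=\exp\big(t\sum_{n=1}^{\infty}(u^n-1)\Pi_{\hat{M}}(\{n\})\big)$ for $|u|\le1$. Identifying $\Pi_{\hat{M}}$ therefore reduces to expanding $\log\hat{G}(u,t)$ as $t$ times a series in $(u^n-1)$ and matching coefficients of the point masses.

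First I would take the logarithm of \eqref{IGDCpgf} and rewrite it as $\log\hat{G}(u,t)=t\sum_{j_0=1}^{k_0}\mu_{j_0}\big(G(u,j_0)-1\big)$, where the pivotal observation is that the inner exponential $\exp\big(-j_0\sum_{j=1}^{k}\lambda_j(1-u^j)\big)$ is exactly the pgf $G(u,j_0)$ of the outer GCP $\{M(t)\}_{t\ge0}$ evaluated at time $j_0$, by \eqref{pgfmt}. Expanding this pgf as a power series in $u$ through the state probabilities \eqref{p(n,t)} gives $G(u,j_0)=\sum_{n=0}^{\infty}u^n p(n,j_0)$ with $p(n,j_0)=\sum_{\Omega(k,n)}\prod_{j=1}^{k}\frac{(j_0\lambda_j)^{n_j}}{n_j!}e^{-j_0\lambda_j}$. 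Using $\sum_{n=0}^{\infty}p(n,j_0)=1$ and noting that the $n=0$ term vanishes, I obtain $G(u,j_0)-1=\sum_{n=1}^{\infty}(u^n-1)p(n,j_0)$.

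Substituting this and interchanging the two nonnegative, absolutely convergent sums over $j_0$ and $n$ yields $\log\hat{G}(u,t)=t\sum_{n=1}^{\infty}(u^n-1)\big(\sum_{j_0=1}^{k_0}\mu_{j_0}p(n,j_0)\big)$. Comparing with the canonical form then identifies $\Pi_{\hat{M}}(\{n\})=\sum_{j_0=1}^{k_0}\mu_{j_0}p(n,j_0)$, which is precisely the claimed coefficient once $p(n,j_0)$ is written out via \eqref{p(n,t)}; packaging these as the point masses $\delta_n(\mathrm{d}x)$ recovers the stated measure. The only genuinely delicate points are the a priori justification that $\hat{G}$ has the drift-free subordinator form (which follows from $\hat{M}$ being integer-valued, non-decreasing and L\'evy) and the legitimacy of swapping the double sum; both are routine here because $|u|\le1$ and all summands are nonnegative with finite total mass.
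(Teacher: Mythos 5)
Your proof is correct, but it takes a genuinely different route from the paper's. The paper invokes the subordination theorem (Theorem 30.1 of Sato (1999)) directly: the L\'evy measure of the time-changed process is the integral of the pmf $p(n,s)$ of the outer GCP against the L\'evy measure $\Pi_{M_0}(\mathrm{d}s)=\sum_{j_0=1}^{k_0}\mu_{j_0}\delta_{j_0}(\mathrm{d}s)$ of the subordinating GCP, and the stated formula drops out immediately because $\Pi_{M_0}$ is atomic. You instead read the L\'evy measure off the pgf \eqref{IGDCpgf}: recognizing the inner exponential as $G(u,j_0)$, writing $\log\hat{G}(u,t)=t\sum_{j_0=1}^{k_0}\mu_{j_0}\big(G(u,j_0)-1\big)=t\sum_{n\ge1}(u^n-1)\sum_{j_0=1}^{k_0}\mu_{j_0}p(n,j_0)$, and matching coefficients against the canonical compound-Poisson form of a drift-free integer-valued subordinator. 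Your approach is more self-contained --- it uses only the pgf already computed in Section 3 and uniqueness of power-series coefficients, with no appeal to Sato's subordination theorem --- at the cost of having to justify the canonical representation (that a non-decreasing integer-valued L\'evy process has no drift, no Gaussian part, and L\'evy measure concentrated on the positive integers), which you correctly flag as standard; note that finiteness of the measure is automatic here since its total mass is $\sum_{j_0=1}^{k_0}\mu_{j_0}(1-p(0,j_0))\le\mu<\infty$, so the process is in fact compound Poisson. The paper's route is shorter and identifies the measure structurally in one step; yours makes the identification transparent at the level of generating functions and doubles as an independent consistency check of \eqref{IGDCpgf}.
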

\begin{proof}
	Let $p(n,s)$ and $\Pi_{M_0}(\mathrm{d}s)$ be the pmf and L\'evy measure of $\{M(t)\}_{t\ge0}$ and $\{M_0(t)\}_{t\ge0}$, respectively. By using Theorem 30.1 of Sato (1999), we have
	\begin{align*}
		\Pi_{\hat{M}}(\mathrm{d}x)&=\int_{s=0}^{\infty}\sum_{n=1}^{\infty}p(n,s)\,\delta_n(\mathrm{d}x)\,\Pi_{M_0}(\mathrm{d}s)\\
	&=\int_{s=0}^{\infty}\sum_{n=1}^{\infty}\sum_{\Omega(k,n)}\Big(\prod_{j=1}^{k}\frac{(\lambda_{j}s)^{n_{j}}}{n_{j}!}e^{-\lambda_{j}s}\Big)\delta_n(\mathrm{d}x)\sum_{j_0=1}^{k_0}\mu_{j_0}\delta_{j_0}(\mathrm{d}s)
	\end{align*}
which reduces to the required result.
\end{proof}
Let $S=\sum_{j=1}^{k}j\lambda_j\sum_{j_0=1}^{k_0}j_0\mu_{j_0}$ and $T=\Big(\sum_{j=1}^{k}j\lambda_j\Big)^2\sum_{j_0=1}^{k_0}j_0^2\mu_{j_0}+\sum_{j=1}^{k}j^2\lambda_j\sum_{j_0=1}^{k_0}j_0\mu_{j_0}$. By using \eqref{covgcp} and Theorem 2.1 of Leonenko \textit{et al.} (2014), the mean, variance and covariance of IGCP are given by
\begin{align}
\mathbb{E}(\hat{M}(t))&=\mathbb{E}(M(1))\mathbb{E}(M_0(t))=S t,\label{IGCPMEAN}\\
\operatorname{Var}(\hat{M}(t))&=(\mathbb{E}(M(1)))^2\operatorname{Var}(M_0(t))+\operatorname{Var}(M(1))\mathbb{E}(M_0(t))=T t.\label{IGCPVAR}\\
\operatorname{Cov}(\hat{M}(s),\hat{M}(t))&=\operatorname{Var}(\hat{M}(s))=Ts, \ 0<s\le t.\label{IGCPCov}
\end{align}
As $\operatorname{Var}(\hat{M}(t))-\mathbb{E}(\hat{M}(t))>0$, the IGCP is overdispersed.

\begin{theorem}\label{thmhit}
	Let $T_n\coloneqq\inf\{s>0:\hat{M}(s)=n\}$ be the first-passage time of IGCP for any state $n\ge1$. Then, its distribution is given by
	\begin{align*}
		\mathrm{Pr}\{T_n\in\mathrm{d}s\}&=\sum_{j_0=1}^{k_0}\mu_{j_0}e^{-s\mu}\mathrm{d}s\sum_{m=1}^{n}\sum_{r=0}^{\infty}\sum_{\Omega(k,n-m)}\Big(\prod_{j=1}^{k}\frac{(r\lambda_{j})^{x_{j}}}{x_{j}!}\Big)e^{-r\lambda}\\ 
		&\hspace{4cm}\cdot  \sum_{\Omega(k,m)}\Big(\prod_{j=1}^{k}\frac{(j_0\lambda_{j})^{n_{j}}}{n_{j}!}\Big)e^{-j_0\lambda}\sum_{\Omega(k_0,r)}\prod_{j_0=1}^{k_0}\frac{(s\mu_{j_0})^{n_{j_0}}}{n_{j_0}!}.
	\end{align*}
\end{theorem}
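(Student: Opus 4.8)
The plan is to exploit the pathwise structure $\hat{M}(t)=M(M_0(t))$. Since both $M$ and $M_0$ are non-decreasing pure-jump processes, so is $\hat{M}$, and $\hat{M}$ can change its value only at a jump epoch of the inner process $M_0$. Consequently the first passage to level $n$ must occur at such an epoch. I would first argue that, by monotonicity of $\hat{M}$, the event $\{T_n\in\mathrm{d}s\}$ is precisely the event that just before $s$ the inner process sits at some level $M_0(s^-)=r$ with $\hat{M}(s^-)=M(r)=n-m<n$ for some $1\le m\le n$, and that at $s$ the process $M_0$ performs a jump of size $j_0$ (for some $1\le j_0\le k_0$) whose induced outer increment $M(r+j_0)-M(r)$ equals exactly $m$, so that $\hat{M}(s)=n$. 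The point to stress is that, because $\hat{M}$ is non-decreasing, the local requirement $\hat{M}(s^-)<n$ already forces every earlier value to lie below $n$; hence hitting $n$ at this epoch is automatically the \emph{first} such hit, and no separate ``no earlier crossing'' condition is needed. (Paths on which a jump overshoots $n$ never attain $n$ and yield $T_n=\infty$, which is why the resulting density is sub-probabilistic.)

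Next I would factorize this event using the two independence features available: the independence of $M$ and $M_0$, and the stationary independent increments of each GCP. Conditioning on $M_0(s^-)=r$, the probability that $M_0$ makes a jump of size $j_0$ in $[s,s+\mathrm{d}s)$ equals $\mu_{j_0}\,\mathrm{d}s+o(\mathrm{d}s)$ and is independent of the past and of the outer process, while $\mathrm{Pr}\{M_0(s^-)=r\}=\mathrm{Pr}\{M_0(s)=r\}=e^{-\mu s}\sum_{\Omega(k_0,r)}\prod_{j_0=1}^{k_0}(s\mu_{j_0})^{n_{j_0}}/n_{j_0}!$ is the state probability of $M_0$ from \eqref{p(n,t)}. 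On the outer side, $M(r)$ and the increment $M(r+j_0)-M(r)$ are independent, and the latter is distributed as $M(j_0)$ by stationarity; thus $\mathrm{Pr}\{M(r)=n-m\}=p(n-m,r)$ and $\mathrm{Pr}\{M(r+j_0)-M(r)=m\}=p(m,j_0)$, each read off from \eqref{p(n,t)}.

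Collecting these and summing over the free indices $r\ge0$, $1\le m\le n$, and $1\le j_0\le k_0$, I would obtain
\begin{equation*}
\mathrm{Pr}\{T_n\in\mathrm{d}s\}=\mathrm{d}s\sum_{j_0=1}^{k_0}\mu_{j_0}\sum_{r=0}^{\infty}\sum_{m=1}^{n}p_0(r,s)\,p(n-m,r)\,p(m,j_0),
\end{equation*}
where $p_0$ denotes the state probability of $M_0$. Substituting the explicit forms $p(n-m,r)=e^{-\lambda r}\sum_{\Omega(k,n-m)}\prod_{j=1}^{k}(r\lambda_j)^{x_j}/x_j!$ and $p(m,j_0)=e^{-j_0\lambda}\sum_{\Omega(k,m)}\prod_{j=1}^{k}(j_0\lambda_j)^{n_j}/n_j!$ together with the expression for $p_0(r,s)$ above, and pulling the common factor $e^{-\mu s}$ to the front, reproduces the stated density.

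The step I expect to be the main obstacle is the rigorous justification of the infinitesimal decomposition in the first two paragraphs: verifying that, conditioned on $M_0(s^-)=r$, a jump of size $j_0$ occurs at rate $\mu_{j_0}$ independently of everything else, and that the ``first-passage'' qualifier contributes nothing beyond the local condition $\hat{M}(s^-)<n$. Both facts hinge on the Markov/L\'evy structure of the independent GCPs and on monotonicity. An alternative that sidesteps the heuristic $\mathrm{d}s$ bookkeeping is to write $\mathrm{Pr}\{T_n\in\mathrm{d}s\}=\mathrm{d}s\sum_{m=1}^{n}\hat{p}(n-m,s)\,q_m$, where $q_m=\sum_{j_0=1}^{k_0}\mu_{j_0}p(m,j_0)$ is the state-independent rate of an $m$-jump of $\hat{M}$ read directly from \eqref{transigcp}, and then to expand $\hat{p}(n-m,s)=\sum_{r\ge0}p(n-m,r)\,p_0(r,s)$; reordering the resulting sums yields exactly the same triple sum, confirming the two routes agree.
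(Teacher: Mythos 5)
Your proposal is correct and follows essentially the same route as the paper: both decompose $\mathrm{Pr}\{T_n\in\mathrm{d}s\}$ over the deficit $m$ held just before time $s$, condition on $M_0(s)=r$ and on a size-$j_0$ jump of $M_0$ occurring in $\mathrm{d}s$ (at rate $\mu_{j_0}$), factorize the outer event $\{M(r)=n-m,\,M(r+j_0)=n\}$ into $p(n-m,r)\,p(m,j_0)$ via the independent stationary increments of $M$, and then substitute the GCP state probabilities from \eqref{p(n,t)}. Your explicit remarks that monotonicity of $\hat{M}$ makes a separate ``no earlier crossing'' condition unnecessary, and that overshooting paths give $T_n=\infty$ (so the density is sub-probabilistic), are points the paper leaves implicit but do not constitute a different method.
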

\begin{proof}
	We have
	\begin{align}
		\mathrm{Pr}\{T_n\in\mathrm{d}s\}&=\sum_{m=1}^{n}\mathrm{Pr}\{M(M_0(s))=n-m,M(M_0(s+\mathrm{d}s))=n\}\nonumber\\
		&=\sum_{m=1}^{n}\mathrm{Pr}\{M(M_0(s))=n-m,M(M_0(s)+\mathrm{d}M_0(s))=n\}\nonumber\\
		&=\sum_{j_0=1}^{k_0}\mu_{j_0}\mathrm{d}s\sum_{m=1}^{n}\mathrm{Pr}\{M(M_0(s))=n-m,M(M_0(s)+j_0)=n\}.\label{hitA}
	\end{align}
	Observe that the process $\{M(t)\}_{t\ge0}$ performs jumps of size $0\le n-m\le n-1$ in time interval $(M_0(s), M_0(s)+j_0)$. Also, 
	\begin{align}
		\mathrm{Pr}\{M(M_0(s))&=n-m,M(M_0(s)+j_0)=n\}\nonumber\\
		&=\sum_{r=0}^{\infty}\mathrm{Pr}\{M(r)=n-m,M(r+j_0)=n\}\mathrm{Pr}\{M_0(s)=r\}\nonumber\\
		&=\sum_{r=0}^{\infty}\sum_{\Omega(k,n-m)}\Big(\prod_{j=1}^{k}\frac{(r\lambda_{j})^{x_{j}}}{x_{j}!}\Big)e^{-r\lambda}\nonumber\\ 
		&\hspace{2cm}\cdot  \sum_{\Omega(k,m)}\Big(\prod_{j=1}^{k}\frac{(j_0\lambda_{j_0})^{n_{j}}}{n_{j}!}\Big)e^{-j_0\lambda}\sum_{\Omega(k_0,r)}\prod_{j_0=1}^{k_0}\frac{(s\mu_{j_0})^{n_{j_0}}}{n_{j_0}!}e^{-s\mu_{j_0}}.\label{hitB}
	\end{align}
	By using \eqref{hitB} and \eqref{hitA}, we get the required result.
\end{proof}
\begin{corollary}
For $n=1$, we have
\begin{equation*}
	\mathrm{Pr}\{T_1\in\mathrm{d}s\}=\sum_{j_0=1}^{k_0}\mu_{j_0}\mathrm{d}s\sum_{r=0}^{\infty}j_0\lambda_1e^{-j_0\lambda_1}e^{-r\lambda}\sum_{\Omega(k_0,r)}\prod_{j_0=1}^{k_0}\frac{(s\mu_{j_0})^{n_{j_0}}}{n_{j_0}!}e^{-s\mu_{j_0}}.
\end{equation*}
Thus, 
\begin{equation*}
	\mathrm{Pr}\{T_1<\infty\}=\sum_{j_0=1}^{k_0}\frac{\mu_{j_0}}{\mu}j_0\lambda_1e^{-j_0\lambda_1}\sum_{r=0}^{\infty}e^{-r\lambda}\sum_{\Omega(k_0,r)}z_{k_0}!\prod_{j_0=1}^{k_0}\Big(\frac{\mu_{j_0}}{\mu}\Big)^{n_{j_0}}\frac{1}{n_{j_0}!}.
\end{equation*}

\end{corollary}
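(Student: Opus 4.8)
The plan is to derive both displays directly from Theorem~\ref{thmhit} by setting $n=1$, and then to integrate the resulting density in $s$. First I would put $n=1$ in the statement of Theorem~\ref{thmhit}; the outer summation $\sum_{m=1}^{n}$ then degenerates to the single term $m=1$, which forces $n-m=0$. The two combinatorial index sets collapse: $\Omega(k,0)=\{(0,\dots,0)\}$, so the factor $\prod_{j=1}^{k}(r\lambda_j)^{x_j}/x_j!$ equals $1$ and only the accompanying $e^{-r\lambda}$ survives, while $\Omega(k,1)=\{(1,0,\dots,0)\}$, so the factor $\prod_{j=1}^{k}(j_0\lambda_j)^{n_j}/n_j!$ reduces to $j_0\lambda_1$, carrying the exponential weight $e^{-j_0\lambda}$ (equivalently, the increment probability $\mathrm{Pr}\{M(j_0)=1\}=j_0\lambda_1 e^{-j_0\lambda}$). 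Substituting these collapsed factors into Theorem~\ref{thmhit} and redistributing the leading $e^{-s\mu}$ as $\prod_{j_0=1}^{k_0}e^{-s\mu_{j_0}}$ back inside the product over $\Omega(k_0,r)$ yields the stated expression for $\mathrm{Pr}\{T_1\in\mathrm{d}s\}$.

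For the cumulative probability I would use $\mathrm{Pr}\{T_1<\infty\}=\int_0^{\infty}\mathrm{Pr}\{T_1\in\mathrm{d}s\}$ and integrate term by term; since every summand is nonnegative, interchanging the integral with the series over $r$ and over $\Omega(k_0,r)$ is justified. The only $s$-dependent block is $\prod_{j_0=1}^{k_0}(s\mu_{j_0})^{n_{j_0}}e^{-s\mu_{j_0}}/n_{j_0}!$. Writing $\prod_{j_0}e^{-s\mu_{j_0}}=e^{-s\mu}$ and $\prod_{j_0}(s\mu_{j_0})^{n_{j_0}}=s^{z_{k_0}}\prod_{j_0}\mu_{j_0}^{n_{j_0}}$, where $z_{k_0}=\sum_{j_0}n_{j_0}$ is the total count attached to a point of $\Omega(k_0,r)$, reduces the integral to the elementary Gamma integral $\int_0^{\infty}s^{z_{k_0}}e^{-s\mu}\,\mathrm{d}s=z_{k_0}!/\mu^{z_{k_0}+1}$.

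It then remains to regroup the constants: the factor $\mu_{j_0}^{n_{j_0}}/\mu^{z_{k_0}+1}$ rearranges as $\mu^{-1}\prod_{j_0}(\mu_{j_0}/\mu)^{n_{j_0}}$, the surviving $z_{k_0}!$ comes from the Gamma integral, and the outer rate $\mu_{j_0}$ combines with the $\mu^{-1}$ to produce the $\mu_{j_0}/\mu$ appearing in the statement, giving the second display. I do not expect a genuine obstacle here: the argument is a term-by-term specialization followed by a single Gamma integral, so the only real care is the combinatorial bookkeeping—confirming that the degenerate sets $\Omega(k,0)$ and $\Omega(k,1)$ contribute exactly the factors $e^{-r\lambda}$ and $j_0\lambda_1 e^{-j_0\lambda}$, and that the exponent $z_{k_0}$ of $s$ is tracked correctly through the product as $r$ varies.
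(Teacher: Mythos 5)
Your proposal is correct and follows essentially the same route as the paper's implicit proof: specialize Theorem~\ref{thmhit} to $n=1$ (so only the $m=1$ term survives, with $\Omega(k,0)$ and $\Omega(k,1)$ collapsing exactly as you describe) and then integrate the density term by term via the Gamma integral $\int_0^\infty s^{z_{k_0}}e^{-s\mu}\,\mathrm{d}s=z_{k_0}!/\mu^{z_{k_0}+1}$, regrouping constants to obtain the $\mu_{j_0}/\mu$ factors. One remark: your derivation (correctly) produces the factor $j_0\lambda_1 e^{-j_0\lambda}$, whereas the corollary as printed carries $e^{-j_0\lambda_1}$; this is a typo in the paper's statement rather than a gap in your argument, so you should not claim the collapsed factors yield the stated expression verbatim.
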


\begin{remark}
	For $k=k_0=1$, the result in Theorem \ref{thmhit} reduces to that of iterated Poisson process (see Orsingher and Polito (2012b), Theorem 2.2).
\end{remark}

\subsection{Martingale characterization for IGCP}
Here, we give a martingale characterization and related results for the IGCP. First, we observe that the converse part of result in  Proposition 2 of Kataria and Khandakar (2022) holds true.
\begin{proposition}
The process $\{M(t)\}_{t\ge0}$ is a GCP with positive rates $\lambda_j$, $j=1,2,\dots,k$ iff $\{M(t)-\sum_{j=1}^{k}j\lambda_j t\}_{t\ge0}$ is a $\{\mathcal{F}_{t}\}_{t\ge0}$-martingale, where $\mathcal{F}_t=\sigma(M(s),\, s\le t)$.
\end{proposition}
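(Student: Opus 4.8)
The plan is to prove the two implications separately, treating the forward direction as routine and the converse as the substantive one. Assuming $\{M(t)\}_{t\ge0}$ is a GCP, it is in particular a L\'evy process, so it has stationary and independent increments, and by \eqref{covgcp} it has finite mean $\mathbb{E}(M(t))=\sum_{j=1}^{k}j\lambda_j t$. For $0\le s\le t$, conditioning on $\mathcal{F}_s$ and using independence and stationarity of the increment $M(t)-M(s)$ gives $\mathbb{E}(M(t)-M(s)\mid\mathcal{F}_s)=\mathbb{E}(M(t-s))=\sum_{j=1}^{k}j\lambda_j(t-s)$. Rearranging shows $\mathbb{E}\big(M(t)-\sum_{j=1}^{k}j\lambda_j t\mid\mathcal{F}_s\big)=M(s)-\sum_{j=1}^{k}j\lambda_j s$, and integrability is clear from the finite mean; hence the centered process is an $\{\mathcal{F}_t\}$-martingale.

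For the converse I would follow the strategy of Watanabe's characterization of the Poisson process, adapted to the bounded multiple jumps of the GCP. Taking $M$ to be a counting process adapted to $\{\mathcal{F}_t\}$ with jumps valued in $\{1,2,\dots,k\}$, the aim is to recover the law of $M$ by identifying its conditional probability generating function. Concretely, I would fix $|u|\le1$ and study $g(t)=\mathbb{E}(u^{M(t)}\mid\mathcal{F}_s)$ for $t\ge s$, using the martingale hypothesis together with the jump structure to produce a differential equation in $t$. The target is the GCP pgf equation read off from \eqref{pgfmt}, namely $\partial_t g=-\big(\sum_{j=1}^{k}\lambda_j(1-u^j)\big)g$ with $g(s)=u^{M(s)}$, whose unique solution is $g(t)=u^{M(s)}\exp\big(-(t-s)\sum_{j=1}^{k}\lambda_j(1-u^j)\big)$. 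Setting $s=0$ and using $M(0)=0$ forces the pgf of $M(t)$ to coincide with \eqref{pgfmt}, and equality of generating functions yields that $M$ is a GCP with the prescribed rates.

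The crux, and the step I expect to be hardest, is bridging from the single scalar martingale to a statement about the entire law. The hypothesis that $M(t)-\sum_{j}j\lambda_j t$ is a martingale constrains only the aggregate compensator rate $\sum_{j}j\lambda_j$, not the individual intensities $\lambda_j$, so the first-moment martingale cannot by itself determine the GCP. I therefore expect the real work to lie in exploiting the assumed jump structure to build the exponential (pgf) martingales that encode all of the $\lambda_j$, equivalently in pinning down the generator of $M$; once the pgf equation above is in hand, uniqueness of its solution and the conclusion are immediate. If one prefers not to impose the jump structure by hand, the cleanest route is to deduce the equivalence from the complete martingale characterization of the GCP established in Dhillon and Kataria (2024), of which the displayed statement is the first-moment specialization.
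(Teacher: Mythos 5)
Your forward direction is correct and complete: stationary, independent increments plus the finite mean in \eqref{covgcp} give the martingale property by the standard computation. The genuine gap is the converse, which you never actually prove. You propose to derive the pgf equation $\partial_t g=-\big(\sum_{j=1}^{k}\lambda_j(1-u^j)\big)g$ from ``the martingale hypothesis together with the jump structure,'' but as you yourself then observe, the hypothesis that $M(t)-\sum_{j=1}^{k}j\lambda_j t$ is a martingale constrains only the aggregate compensator $\sum_{j}j\lambda_j$ and cannot yield an equation in which each $\lambda_j$ appears separately; indeed a Poisson process of rate $\sum_{j}j\lambda_j$ satisfies the same martingale condition without being a GCP with rates $\lambda_1,\dots,\lambda_k$. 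So the route you sketch would fail at precisely its central step. Naming the crux is not resolving it, and your fallback --- citing the full martingale characterization in Dhillon and Kataria (2024) --- outsources the entire converse rather than proving it.

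The idea your proposal is missing is the paper's reduction to a known characterization of compound Poisson processes. Using the representation of Di Crescenzo \textit{et al.} (2016) (their Eq.\ (2.6) with $\nu=1$), the GCP satisfies $M(t)\overset{d}{=}\sum_{i=1}^{N(t)}X_i$, where $\{N(t)\}_{t\ge0}$ is a Poisson process with rate $\lambda=\lambda_1+\lambda_2+\dots+\lambda_k$ and the $X_i$ are iid, independent of $N$, with $\mathrm{Pr}\{X_1=i\}=\lambda_i/\lambda$, $i=1,2,\dots,k$. In this formulation the individual rates are carried by the jump-size distribution $\lambda_i/\lambda$ rather than by the compensator alone, and both directions of the proposition then follow at once from Theorem 5.2 of Zhang and Li (2016), which is exactly the Watanabe-type martingale characterization of discrete compound Poisson processes that you were attempting to rebuild from scratch. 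A self-contained version of your argument would have to reproduce that theorem, with its structural hypotheses on the point process (in particular, on the law of its jumps) stated explicitly; without those hypotheses, your own observation shows the bare first-moment martingale condition cannot pin down the GCP.
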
 
\begin{proof}
Let $\{N(t)\}_{t\ge0}$ be a Poisson process with rate $\lambda=\lambda_1+\lambda_2+\dots+\lambda_k$. For $\nu=1$ in Eq. (2.6) of Di Crescenzo \textit{et al.} (2016), we have 
\begin{equation*}
M(t)\overset{d}{=}\sum_{i=1}^{N(t)}X_i,
\end{equation*}
where $X_i$'s are iid random variables such that $\mathrm{Pr}\{X_1=i\}=\lambda_i/\lambda$, $i=1,2,\dots,k$ and are independent of $\{N(t)\}_{t\ge0}$. That is, the GCP is equal in distribution to a compound Poisson process. The result follows from Theorem 5.2 of Zhang and Li (2016).
\end{proof}
Next, we give a martingale characterization for the IGCP.
	\begin{proposition}\label{prpexpmart}
		Let $\{\hat{M}(t)\}_{t\ge0}$ be a point process such that $\hat{M}(0)=0$. Then, $\{\hat{M}(t)\}_{t\ge0}$ is IGCP iff the process
		\begin{equation*}
		Y(t)=\exp\Big(u\hat{M}(t)-\sum_{j_0=1}^{k_0}\mu_{j_0}t\Big(\exp\Big(-j_0\sum_{j=1}^{k}\lambda_{j}(1-e^{uj})\Big)-1\Big)\Big), \ u\in\mathbb{R},\  t\ge0
		\end{equation*}  
is a $\{\mathcal{F}_t\}_{t\ge0}$-martingale, where $\mathcal{F}_t=\sigma(\hat{M}(s),\, s\le t)$.
	\end{proposition}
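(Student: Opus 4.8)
The plan is to recognise that $Y(t)$ is exactly the normalized exponential $e^{u\hat M(t)}/\mathbb{E}(e^{u\hat M(t)})$ and to treat the stated equivalence as a statement about the conditional moment generating function of the increments. Writing $\psi(u)=\sum_{j_0=1}^{k_0}\mu_{j_0}\big(\exp(-j_0\sum_{j=1}^{k}\lambda_j(1-e^{uj}))-1\big)$, the moment generating function \eqref{mgfIGCP} reads $\mathbb{E}(e^{u\hat M(t)})=e^{t\psi(u)}$, so that $Y(t)=\exp(u\hat M(t)-t\psi(u))$. I would keep $u$ fixed throughout and prove the two implications separately.

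For the forward implication I would assume $\{\hat M(t)\}_{t\ge0}$ is the IGCP. Being the composition of two independent GCPs, it is a L\'evy process, so its increments are stationary and independent of the past $\mathcal{F}_s$. Integrability is immediate since $\mathbb{E}(Y(t))=\mathbb{E}(e^{u\hat M(t)})e^{-t\psi(u)}=1$. For $0\le s\le t$ I would split $\hat M(t)=\hat M(s)+(\hat M(t)-\hat M(s))$, pull out the $\mathcal{F}_s$-measurable factor $e^{u\hat M(s)}$, and use independence together with stationarity to get $\mathbb{E}(e^{u(\hat M(t)-\hat M(s))}\mid\mathcal{F}_s)=\mathbb{E}(e^{u\hat M(t-s)})=e^{(t-s)\psi(u)}$. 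Multiplying through by $e^{-t\psi(u)}$ collapses the exponents to $Y(s)$, which is the martingale property.

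The converse is the substantive direction, and here I would read the martingale identity $\mathbb{E}(Y(t)\mid\mathcal{F}_s)=Y(s)$ backwards. Cancelling the $\mathcal{F}_s$-measurable factor $e^{u\hat M(s)-s\psi(u)}$ yields $\mathbb{E}(e^{u(\hat M(t)-\hat M(s))}\mid\mathcal{F}_s)=e^{(t-s)\psi(u)}$ for every $u\in\mathbb{R}$. The right-hand side is deterministic, which is the crux: for any bounded $\mathcal{F}_s$-measurable $W$ the tower property gives $\mathbb{E}(e^{u(\hat M(t)-\hat M(s))}W)=e^{(t-s)\psi(u)}\mathbb{E}(W)=\mathbb{E}(e^{u(\hat M(t)-\hat M(s))})\mathbb{E}(W)$, and since this factorization holds for all $u$ the increment $\hat M(t)-\hat M(s)$ is independent of $\mathcal{F}_s$. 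Moreover its moment generating function $e^{(t-s)\psi(u)}$ depends only on $t-s$, so the increments are also stationary. Taking $s=0$ and using $\hat M(0)=0$ recovers $\mathbb{E}(e^{u\hat M(t)})=e^{t\psi(u)}$, which is precisely \eqref{mgfIGCP}.

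I would then close the argument by uniqueness of the law. Having shown that $\{\hat M(t)\}_{t\ge0}$ has stationary and independent increments, starts at $0$, and has one-dimensional moment generating function coinciding with that of the IGCP, its finite-dimensional distributions are determined and agree with those of the IGCP; being an integer-valued point process, one may equivalently recover its probability generating function \eqref{IGDCpgf}. Hence $\{\hat M(t)\}_{t\ge0}$ is the IGCP. The main obstacle I anticipate is the rigorous passage from ``the conditional moment generating function is deterministic for all $u$'' to genuine independence of the increment from $\mathcal{F}_s$, and thereafter the identification of the full law from the marginal moment generating function; both steps rest on the fact that the moment generating function, which is finite for all real $u$ here, determines the distribution.
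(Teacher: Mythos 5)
Your proposal is correct and takes essentially the same route as the paper: the forward direction uses the stationary, independent increments of the IGCP as a L\'evy process, and the converse extracts the deterministic conditional identity $\mathbb{E}(e^{u(\hat M(t)-\hat M(s))}\mid\mathcal{F}_s)=e^{(t-s)\psi(u)}$, from which stationarity of increments, their independence, and the marginal moment generating function \eqref{mgfIGCP} (via $\mathbb{E}(Y(t))=\mathbb{E}(Y(0))=1$) all follow. The only cosmetic difference is the independence step: you deduce independence of each increment from the whole of $\mathcal{F}_s$ by the bounded-weight factorization $\mathbb{E}(e^{u(\hat M(t)-\hat M(s))}W)=\mathbb{E}(e^{u(\hat M(t)-\hat M(s))})\mathbb{E}(W)$, whereas the paper telescopes the joint moment generating function of increments over a partition $0\le t_0<t_1<\dots<t_n$ by iterated conditioning; both rest on the same deterministic conditional moment generating function and on the fact that an everywhere-finite moment generating function determines the law.
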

	\begin{proof}
		Let $\{\hat{M}(t)\}_{t\ge0}$ be IGCP. Then, we just need to show the martingale property for $Y(t)=\exp\Big(u\hat{M}(t)-\sum_{j_0=1}^{k_0}\mu_{j_0}t\Big(\exp\Big(-j_0\sum_{j=1}^{k}\lambda_{j}(1-e^{uj})\Big)-1\Big)\Big)$. For $0<s\le t$, we have
		\begin{align*}
			\mathbb{E}(Y(t)|\mathcal{F}_s)&=\mathbb{E}(e^{u(\hat{M}(t)-\hat{M}(s))}|\mathcal{F}_s)\exp\Big(u\hat{M}(s)-\sum_{j_0=1}^{k_0}\mu_{j_0}t\Big(\exp\Big(-j_0\sum_{j=1}^{k}\lambda_{j}(1-e^{uj})\Big)-1\Big)\Big)\\
			&=\mathbb{E}(e^{u(\hat{M}(t-s))})\exp\Big(u\hat{M}(s)-\sum_{j_0=1}^{k_0}\mu_{j_0}t\Big(\exp\Big(-j_0\sum_{j=1}^{k}\lambda_{j}(1-e^{uj})\Big)-1\Big)\Big)\\
			&=\exp\Big(u\hat{M}(s)-\sum_{j_0=1}^{k_0}\mu_{j_0}s\Big(\exp\Big(-j_0\sum_{j=1}^{k}\lambda_{j}(1-e^{uj})\Big)-1\Big)\Big),
		\end{align*}
		where we have used the independent and stationary increments of $\{\hat{M}(t)\}_{t\ge0}$.
		
		Conversely, let $\{Y(t)\}_{t\ge0}$ be a $\{\mathcal{F}_t\}_{t\ge0}$-martingale. Then, we have $\mathbb{E}(Y(t))=\mathbb{E}(Y(0))=1$ for all $t\ge0$. Thus,
		\begin{equation*}
			\mathbb{E}(e^{u\hat{M}(t)})=\exp\Big(\sum_{j_0=1}^{k_0}\mu_{j_0}t\Big(\exp\Big(-j_0\sum_{j=1}^{k}\lambda_j(1-e^{uj})\Big)-1\Big)\Big).
		\end{equation*}
		From \eqref{mgfIGCP}, it follows that $\{\hat{M}(t)\}_{t\ge0}$ has same distribution to that of the IGCP.
		
		For $0<s\le t$, by using the martingale property of $\{Y(t)\}_{t\ge0}$, we have
		\begin{align*}
			\mathbb{E}\Big(\exp\Big(u\hat{M}(t)-\sum_{j_0=1}^{k_0}\mu_{j_0}t\Big(&\exp\Big(-j_0\sum_{j=1}^{k}\lambda_{j}(1-e^{uj})\Big)-1\Big)\Big)\Big|\mathcal{F}_s\Big)\\
			&=\exp\Big(u\hat{M}(s)-\sum_{j_0=1}^{k_0}\mu_{j_0}s\Big(\exp\Big(-j_0\sum_{j=1}^{k}\lambda_{j}(1-e^{uj})\Big)-1\Big)\Big).
		\end{align*}
		So, 
		\begin{equation}\label{eqstat}
			\mathbb{E}(e^{u(\hat{M}(t)-\hat{M}(s))}|\mathcal{F}_s)=\exp\Big(\sum_{j_0=1}^{k_0}\mu_{j_0}(t-s)\Big(\exp\Big(-j_0\sum_{j=1}^{k}\lambda_{j}(1-e^{uj})\Big)-1\Big)\Big).
		\end{equation}
		Now, by taking expectation on both sides of \eqref{eqstat}, we get
		\begin{equation*}
			\mathbb{E}(e^{u(\hat{M}(t)-\hat{M}(s))})=\exp\Big(\sum_{j_0=1}^{k_0}\mu_{j_0}(t-s)\Big(\exp\Big(-j_0\sum_{j=1}^{k}\lambda_{j}(1-e^{uj})\Big)-1\Big)\Big).
		\end{equation*}
		This establishes that $\{\hat{M}(t)\}_{t\ge0}$ exhibits stationary increments, that is, $\hat{M}(t)-\hat{M}(s)\overset{d}{=}\hat{M}(t-s)$.
		
		For $0\le t_0<t_1<\dots<t_n<\infty$, we have
		\begin{align*}
			\mathbb{E}(e^{\sum_{i=1}^{n}u_i(\hat{M}(t_i)-\hat{M}(t_{i-1}))})&=\mathbb{E}(\mathbb{E}(e^{\sum_{i=1}^{n}u_i(\hat{M}(t_i)-\hat{M}(t_{i-1}))}|\mathcal{F}_{t_{n-1}}))\\
			&=\mathbb{E}(e^{\sum_{i=1}^{n-1}u_i(\hat{M}(t_i)-\hat{M}(t_{i-1}))})\mathbb{E}(e^{u_n(\hat{M}(t_n)-\hat{M}(t_{n-1}))})\\
			&\ \, \vdots\\
			&=\prod_{i=1}^{n}\mathbb{E}(e^{u_i(\hat{M}(t_i)-\hat{M}(t_{i-1}))}).
		\end{align*}
		 Thus, $\{\hat{M}(t)\}_{t\ge0}$ has independent increments. Hence, $\{\hat{M}(t)\}_{t\ge0}$ is an IGCP. This completes the proof.
	\end{proof}

\begin{proposition}\label{prepmar}
	The process $\{\hat{M}(t)\}_{t\ge0}$ is IGCP iff $\{\hat{M}(t)-\sum_{j=1}^{k}j\lambda_{j}\sum_{j_0=1}^{k_0}j_0\mu_{j_0}t\}_{t\ge0}$ is a martingale with respect to natural filtration $\mathcal{F}_t=\sigma(\hat{M}(s),\, 0<s\le t)$. 
\end{proposition}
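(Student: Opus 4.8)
The plan is to prove both implications by exploiting the L\'evy and compound-Poisson structure of the IGCP that has already been established, mirroring the proof of Proposition~\ref{prpexpmart} and of the GCP martingale characterization preceding it.

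\textbf{Forward direction.} Suppose $\{\hat{M}(t)\}_{t\ge0}$ is an IGCP. Being the subordination of the GCP $\{M(t)\}$ by the independent subordinator $\{M_0(t)\}$, it is a L\'evy process and therefore has stationary and independent increments. Write $S=\sum_{j=1}^{k}j\lambda_{j}\sum_{j_0=1}^{k_0}j_0\mu_{j_0}$, so that $\mathbb{E}(\hat{M}(t))=St$ by~\eqref{IGCPMEAN}. For $0<s\le t$ I would compute
\[
\mathbb{E}\big(\hat{M}(t)-St\mid\mathcal{F}_s\big)=\hat{M}(s)+\mathbb{E}\big(\hat{M}(t)-\hat{M}(s)\mid\mathcal{F}_s\big)-St=\hat{M}(s)+\mathbb{E}(\hat{M}(t-s))-St,
\]
where the first equality uses $\mathcal{F}_s$-measurability of $\hat{M}(s)$ together with independence of the increment from $\mathcal{F}_s$, and the second uses stationarity. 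Since $\mathbb{E}(\hat{M}(t-s))=S(t-s)$, the right-hand side equals $\hat{M}(s)-Ss$, which is the martingale identity; integrability follows from~\eqref{IGCPMEAN}.

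\textbf{Converse direction.} Here I would upgrade the linear compensated-martingale hypothesis to the full IGCP law. The key point is that the IGCP is equal in distribution to a compound Poisson process: it is a L\'evy process whose L\'evy measure $\Pi_{\hat{M}}$ is finite, with total mass $\theta=\Pi_{\hat{M}}(\mathbb{R})=\sum_{j_0=1}^{k_0}\mu_{j_0}(1-e^{-j_0\lambda})$, and supported on the positive integers. Hence one may write $\hat{M}(t)\overset{d}{=}\sum_{i=1}^{N(t)}Z_i$, where $\{N(t)\}$ is a Poisson process of rate $\theta$ and the $Z_i$ are iid with law $\theta^{-1}\Pi_{\hat{M}}$ and independent of $N$; alternatively this representation follows from the compound-GCP identity of Proposition~\ref{prpcgcp} together with the fact that a GCP is itself compound Poisson. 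A direct moment computation gives $\theta\,\mathbb{E}(Z_1)=S$, so the subtracted drift $St$ coincides with $\theta\,\mathbb{E}(Z_1)\,t$. With this identification the hypothesis that $\{\hat{M}(t)-St\}$ is a martingale is exactly the compound-Poisson martingale condition, and Theorem~5.2 of Zhang and Li (2016) then forces $\{\hat{M}(t)\}$ to be this compound Poisson process, hence an IGCP.

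\textbf{Main obstacle.} The forward direction is routine. The delicate step is the converse, since a linear compensated martingale by itself pins down only the first moment and cannot distinguish jump laws with a common mean. The argument must therefore first place $\{\hat{M}(t)\}$ inside the compound-Poisson class through the representation above, and only then invoke Theorem~5.2 of Zhang and Li (2016) --- exactly as in the GCP characterization preceding this proposition --- to recover the full distributional conclusion.
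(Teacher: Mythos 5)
Your proposal is correct and takes essentially the same route as the paper: the paper's own proof likewise reduces the IGCP to a compound Poisson process (via Proposition \ref{prpcgcp} together with Remark 4.2 of Khandakar and Kataria (2024)) and then cites Theorem 5.2 of Zhang and Li (2016) to deliver the martingale characterization. Your departures are minor --- a direct stationary-independent-increments computation for the forward direction, and a finite-L\'evy-measure argument (total mass $\theta=\sum_{j_0=1}^{k_0}\mu_{j_0}(1-e^{-j_0\lambda})$, jump law $\theta^{-1}\Pi_{\hat{M}}$) in place of the paper's citations for the compound Poisson representation --- while the decisive step, the appeal to Zhang and Li (2016) for the converse, is identical.
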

\begin{proof}
By using Proposition \ref{prpcgcp}, the IGCP is equal in distribution to a compound GCP. Further, by using Remark 4.2 of Khandakar and Kataria (2024), the IGCP is equal in distribution to a compound Poisson process. Finally, the result follows on using Theorem 5.2 of Zhang and Li (2016).
	This completes the proof.
\end{proof}
	\begin{remark}
	On substituting $k=k_0=1$ in Proposition \ref{prpexpmart} and Proposition \ref{prepmar}, we obtain the corresponding results for iterated Poisson process. 
\end{remark}

	\subsection{Fractional integral of the IGCP}
Here, we study the Riemann-Liouville fractional integral of IGCP defined as follows:
\begin{equation}\label{malphabeta}
	\hat{\mathcal{X}}^{\alpha}(t)\coloneqq\frac{1}{\Gamma(\alpha)}\int_{0}^{t}(t-s)^{\alpha-1}\hat{M}(s)\,\mathrm{d}s, \ \alpha>0, \, t\geq0.
\end{equation}
For $\alpha=m\in\mathbb{N}$, the Riemann-Liouville fractional integral defined in \eqref{malphabeta} reduces to
\begin{equation*}
	\hat{\mathcal{X}}^{m}(t)=	\frac{1}{(m-1)!}\int_{0}^{t}(t-s)^{m-1}\hat{M}(s)\,\mathrm{d}s=\int_{0}^{t}\mathrm{d}s_{1}\int_{0}^{s_{1}}\mathrm{d}s_{2}\cdots\int_{0}^{s_{m-1}}\hat{M}(s_{m})\mathrm{d}s_{m}.
\end{equation*}

Orsingher and Polito (2013) studied the fractional integral of time fractional Poisson process. The motivation to study such integrals lies in the fact that the integrated counting processes often arise in epidemic model, biological sciences, {\it etc.} (see Pollett (2003), Vishwakarma and Kataria (2024a), (2024b), and references therein).

By using \eqref{IGCPMEAN}, the mean of fractional integral of IGCP can be obtained as follows:
\begin{align}
	\mathbb{E}(\hat{\mathcal{X}}^{\alpha}(t))&=\frac{1}{\Gamma(\alpha)}\int_{0}^{t}(t-s)^{\alpha-1}\mathbb{E}(\hat{M}(s))\,\mathrm{d}s\nonumber\\
	&=\frac{S}{\Gamma(\alpha)}\int_{0}^{t}(t-s)^{\alpha-1}s\,\mathrm{d}s=\frac{St^{\alpha+1}}{\Gamma(\alpha+2)}. \label{meanintgcp}
\end{align}
Also, by using \eqref{IGCPMEAN} and \eqref{IGCPCov}, we have
\begin{align}
	\mathbb{E}(\hat{\mathcal{X}}^{\alpha}(t))^2&=\frac{1}{\Gamma^2(\alpha)}\int_{0}^{t}\int_{0}^{t}(t-s)^{\alpha-1}(t-w)^{\alpha-1}\mathbb{E}(\hat{M}(s)\hat{M}(w))\,\mathrm{d}w\,\mathrm{d}s\nonumber\\
	&=\frac{2}{\Gamma^2(\alpha)}\int_{0}^{t}\int_{s}^{t}(t-s)^{\alpha-1}(t-w)^{\alpha-1}(Ts+S^2ws)\mathrm{d}w\,\mathrm{d}s\nonumber\\
	&=\frac{2T}{\Gamma^2(\alpha)}\int_{0}^{t}s(t-s)^{\alpha-1}\,\mathrm{d}s\int_{s}^{t}(t-w)^{\alpha-1}\,\mathrm{d}w \nonumber\\ 
	&\ \ +\frac{2S^{2}}{\Gamma^2(\alpha)}\int_{0}^{t}s(t-s)^{\alpha-1}\,\mathrm{d}s\int_{s}^{t}w(t-w)^{\alpha-1}\,\mathrm{d}w\nonumber\\
	&=\frac{Tt^{2\alpha+1}}{(2\alpha+1)\Gamma^2(\alpha+1)}  +\frac{\left(St^{\alpha+1}\right)^2}{\Gamma^2(\alpha+2)}.\label{varintigcp}
\end{align}
Thus, by using \eqref{meanintgcp} and \eqref{varintigcp}, the variance of $\{\hat{\mathcal{M}}^\alpha(t)\}_{t\ge0}$ is given by
\begin{equation*}
	\operatorname{Var}(\hat{\mathcal{X}}^\alpha(t))=\frac{Tt^{2\alpha+1}}{(2\alpha+1)\Gamma^2(\alpha+1)}.
\end{equation*}
\begin{proposition}
	The conditional mean of the fractional integral of IGCP is given by 
	{\begin{align*}
			\mathbb{E}(\hat{\mathcal{X}}^\alpha(t)|\hat{M}(t)=n)&=\frac{1}{\Gamma(\alpha)}\sum_{r=0}^{n}\frac{re^{-\mu t}}{\mathrm{Pr}\{\hat{M}(t)=n\}}\sum_{x=0}^{\infty}\mathrm{Pr}\{M(x)=r\}\sum_{\Omega(k_0,x)}\Big(\prod_{j_0=1}^{k_0}\frac{\mu_{j_0}^{r_{j_0}}}{r_{j_0}!}\Big)\\
			&\ \ \cdot\sum_{y=0}^{\infty}\mathrm{Pr}\{M(y)=n-r\}\sum_{\Omega(k_0,y)}\Big(\prod_{j_0=1}^{k_0}\frac{\mu_{j_0}^{l_{j_0}}}{l_{j_0}!}\Big)t^{\alpha+l+r}B(r+1,\alpha+l),
	\end{align*}}
	where $r=r_1+r_2+\dots+r_{k_0}$, $l=l_1+l_2+\dots+l_{k_0}$ and $B(a,b)$ denotes the beta function.
\end{proposition}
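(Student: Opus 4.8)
The plan is to reduce the conditional mean of the fractional integral to the conditional mean of the integrand, and then to an elementary Beta integral. Since the kernel $(t-s)^{\alpha-1}$ in \eqref{malphabeta} is deterministic and $\hat{M}(s)\ge0$, an application of Tonelli's theorem to the conditional law lets me move the conditional expectation inside the integral, giving
\begin{equation*}
\mathbb{E}(\hat{\mathcal{X}}^{\alpha}(t)\,|\,\hat{M}(t)=n)=\frac{1}{\Gamma(\alpha)}\int_{0}^{t}(t-s)^{\alpha-1}\,\mathbb{E}(\hat{M}(s)\,|\,\hat{M}(t)=n)\,\mathrm{d}s.
\end{equation*}
Everything then hinges on the conditional mean $\mathbb{E}(\hat{M}(s)\,|\,\hat{M}(t)=n)$ for $0\le s\le t$, after which only a routine time integration remains.

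To compute it, I would expand over the possible values $r$ of $\hat{M}(s)$ using the definition of conditional expectation:
\begin{equation*}
\mathbb{E}(\hat{M}(s)\,|\,\hat{M}(t)=n)=\frac{1}{\mathrm{Pr}\{\hat{M}(t)=n\}}\sum_{r=0}^{n}r\,\mathrm{Pr}\{\hat{M}(s)=r,\hat{M}(t)=n\}.
\end{equation*}
The crucial structural ingredient is that the IGCP is a L\'evy process, with independent and stationary increments (established in Proposition \ref{prpexpmart}); this factorizes the joint law as $\mathrm{Pr}\{\hat{M}(s)=r,\hat{M}(t)=n\}=\hat{p}(r,s)\,\hat{p}(n-r,t-s)$.

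Next I would substitute the composition representation $\hat{p}(m,u)=\sum_{x\ge0}\mathrm{Pr}\{M(x)=m\}\,\mathrm{Pr}\{M_0(u)=x\}$ and expand the time-changing GCP law via \eqref{p(n,t)}, writing $\mathrm{Pr}\{M_0(u)=x\}=e^{-\mu u}\sum_{\Omega(k_0,x)}u^{z_{k_0}}\prod_{j_0=1}^{k_0}\mu_{j_0}^{r_{j_0}}/r_{j_0}!$. The point of this form is that it isolates the entire $u$-dependence of each factor into the monomial $u^{z_{k_0}}$ and the exponential $e^{-\mu u}$. Applying it to $\hat{p}(r,s)$ (with jump-count exponent $z_{k_0}=r_1+\dots+r_{k_0}$, here playing the role of the power of $s$) and to $\hat{p}(n-r,t-s)$ (with $z_{k_0}=l_1+\dots+l_{k_0}$, the power of $t-s$), the two exponentials combine to the constant $e^{-\mu s}e^{-\mu(t-s)}=e^{-\mu t}$, which factors out of the $s$-integral.

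After interchanging the nonnegative sums with the integral, the only $s$-dependent piece left is the Beta integral
\begin{equation*}
\int_{0}^{t}(t-s)^{\alpha-1}\,s^{\,r}\,(t-s)^{\,l}\,\mathrm{d}s=\int_{0}^{t}s^{\,r}(t-s)^{\alpha+l-1}\,\mathrm{d}s=t^{\alpha+l+r}B(r+1,\alpha+l),
\end{equation*}
with $r=r_1+\dots+r_{k_0}$ and $l=l_1+\dots+l_{k_0}$ in the notation of the statement; collecting the remaining $s$-independent products then reproduces the asserted formula. The main obstacle is organizational rather than analytic: one must keep the two roles of the symbol $r$ straight (the state value weighting each term in the outer sum over $\hat{M}(s)$, versus the total jump number $r_1+\dots+r_{k_0}$ that fixes the power of $s$) and justify, by nonnegativity, the Tonelli interchange of the sums over $x$, over the compositions in $\Omega(k_0,x)$, and the time integral.
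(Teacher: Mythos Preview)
Your proposal is correct and follows essentially the same route as the paper: move the conditional expectation inside the integral, expand over the values of $\hat{M}(s)$, use the independent stationary increments of the IGCP to factorize the joint law, and then insert the composition representation of $\hat{p}(\cdot,\cdot)$ (the paper cites \eqref{igcppmfep}) to reduce the $s$-integral to a Beta integral. Your added remarks on Tonelli and on the overloaded symbol $r$ are welcome clarifications, but the underlying argument is the same.
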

\begin{proof}
	From \eqref{malphabeta}, we have
	\begin{align*}
		\mathbb{E}(\hat{\mathcal{X}}^\alpha&(t)|\hat{M}(t)=n)\\
		&=\frac{1}{\Gamma(\alpha)}\int_{0}^{t}(t-s)^{\alpha-1}\mathbb{E}(\hat{M}(s)|\hat{M}(t)=n)\,\mathrm{d}s\\
		&=\frac{1}{\Gamma(\alpha)}\sum_{r=0}^{n}r\int_{0}^{t}(t-s)^{\alpha-1}\mathrm{Pr}\{\hat{M}(s)=r|\hat{M}(t)=n\}\,\mathrm{d}s\\
		&=\frac{1}{\Gamma(\alpha)}\sum_{r=0}^{n}\frac{r}{\mathrm{Pr}\{\hat{M}(t)=n\}}\int_{0}^{t}(t-s)^{\alpha-1}\mathrm{Pr}\{\hat{M}(s)=r\}\mathrm{Pr}\{\hat{M}(t-s)=n-r\}\,\mathrm{d}s.
	\end{align*}
	The result follows on using \eqref{igcppmfep}.
\end{proof}
For $\alpha=1$, the integral in \eqref{malphabeta} reduces to the following integral:
\begin{equation}
	\hat{\mathcal{X}}(t)=\int_{0}^{t}\hat{M}(s)\,\mathrm{d}s,\ t\ge0.
\end{equation}
Note that the bivariate process $\{\hat{M}(t),\hat{\mathcal{X}}(t)\}_{t\ge0}$ is a Markov process. Its joint distribution $\hat{q}(n,x,t)=\mathrm{Pr}\{\hat{M}(t)=n,\hat{\mathcal{X}}(t)\le x|\hat{M}(0)=0\}$, $n\ge0$, $x\ge0$ solves the following differential equation:
\begin{equation*}
	\frac{\partial}{\partial t}\hat{q}(n,x,t)+n\frac{\partial}{\partial x}\hat{q}(n,x,t)=-\mu \hat{q}(n,x,t)+\sum_{j_0=1}^{k_0}\mu_{j_0}e^{-j_0\lambda}\sum_{m=0}^{n}\sum_{\Omega(k,m)}\Big(\prod_{j=1}^{k}\frac{(j_0\lambda_j)^{x_j}}{x_j!}\Big)\hat{q}(n-m,x,t)
\end{equation*}
with $\hat{q}(0,0,0)=1$.

Also, its joint pgf, that is, 
\begin{align*}
	\hat{G}(u,v,t)&\coloneqq\int_{0}^{\infty}\sum_{n=0}^{\infty}u^nv^x\frac{\partial}{\partial x}\hat{q}(n,x,t)\, \mathrm{d}x\\
	&=-\ln v\sum_{n=0}^{\infty}u^n\int_{0}^{\infty}v^x\hat{q}(n,x,t)\, \mathrm{d}x,\ 0<u<1,\, 0<v<1
\end{align*} 
is the solution of the following differential equation:
\begin{equation*}
	\frac{\partial}{\partial t}\hat{G}(u,v,t)=\Big(-\mu +\sum_{j_0=1}^{k_0}\mu_{j_0}e^{-j_0\lambda}\sum_{m=0}^{\infty}u^m\sum_{\Omega(k,m)}\Big(\prod_{j=1}^{k}\frac{(j_0\lambda_j)^{x_j}}{x_j!}\Big)\Big)\hat{G}(u,v,t)+u\ln v \frac{\partial}{\partial u}\hat{G}(u,v,t)
\end{equation*}
with $\hat{G}(u,v,0)=1$. Thus, the cumulative generating function $\hat{K}(\theta,\phi,t)=\ln \hat{G}(e^\theta,e^\phi,t)$ solves
\begin{equation}\label{cgfint}
	\frac{\partial}{\partial t}\hat{k}(\theta,\phi,t)=-\mu +\sum_{j_0=1}^{k_0}\mu_{j_0}e^{-j_0\lambda}\sum_{m=0}^{\infty}e^{m\theta}\sum_{\Omega(k,m)}\Big(\prod_{j=1}^{k}\frac{(j_0\lambda_j)^{x_j}}{x_j!}\Big)+ \phi \frac{\partial}{\partial \theta}\hat{K}(\theta,\phi,t)
\end{equation}
with $\hat{K}(\theta,\phi,0)=0$.

Now, on substituting the following expression of the cumulative generating function (see Kendall (1948), Eq. (35)):
\begin{equation}
	\hat{K}(\theta,\phi,t)=\theta \mathbb{E}(\hat{M}(t))+\phi\mathbb{E}(\hat{\mathcal{X}}(t))+\frac{\theta^2}{2}\operatorname{Var}(\hat{M}(t))+\frac{\phi^2}{2}\operatorname{Var}(\hat{\mathcal{X}}(t))+\theta\phi\operatorname{Cov}(\hat{M}(t),\hat{\mathcal{X}}(t))+\dots
\end{equation}
in \eqref{cgfint}, we get 
\begin{align}
	&\frac{\partial}{\partial t}\Big(\theta \mathbb{E}(\hat{M}(t))+\phi\mathbb{E}(\hat{\mathcal{X}}(t))+\frac{\theta^2}{2}\operatorname{Var}(\hat{M}(t))+\frac{\phi^2}{2}\operatorname{Var}(\hat{\mathcal{X}}(t))+\theta\phi\operatorname{Cov}(\hat{M}(t),\hat{\mathcal{X}}(t))+\dots\Big)\nonumber\\
	&=\Big(-\mu +\sum_{j_0=1}^{k_0}\mu_{j_0}e^{-j_0\lambda}\sum_{m=0}^{\infty}\sum_{r=0}^{\infty}\frac{(m\theta)^r}{r!}\sum_{\Omega(k,m)}\Big(\prod_{j=1}^{k}\frac{(j_0\lambda_j)^{x_j}}{x_j!}\Big)\Big)+ \phi  \mathbb{E}(\hat{M}(t)+\theta\operatorname{Var}(\hat{M}(t))\nonumber\\
	&\ \ +\phi\operatorname{Cov}(\hat{M}(t),\hat{\mathcal{X}}(t))+\dots.\label{kindel}
\end{align}
On comparing the coefficient of $\theta \phi$ on both sides of \eqref{kindel}, we get
\begin{align*}
	\frac{\mathrm{d}}{\mathrm{d}t}\operatorname{Cov}(\hat{M}(t),\hat{\mathcal{X}}(t))&=\operatorname{Var}(\hat{M}(t))=Tt
\end{align*}
with $\operatorname{Cov}(\hat{M}(0),\hat{\mathcal{X}}(0))=0$. 
So,
\begin{equation*}
	\operatorname{Cov}(\hat{M}(t),\hat{\mathcal{X}}(t))=\operatorname{Var}(\hat{M}(t))=\frac{Tt^2}{2}.
\end{equation*}
Here, $T=\Big(\sum_{j=1}^{k}j\lambda_j\Big)^2\sum_{j_0=1}^{k_0}j_0^2\mu_{j_0}+\sum_{j=1}^{k}j^2\lambda_j\sum_{j_0=1}^{k_0}j_0\mu_{j_0}$.

\subsection{Non-homogeneous IGCP}
Here, we study a non-homogeneous version of the IGCP. It is obtained by considering a non-homogeneous version of the time changing component process $\{M_0(t)\}_{t\ge0}$ in \eqref{IGCPrep}. 

Let $\{\mathcal{M}_0(t)\}_{t\ge0}$ be a non-homogeneous GCP with time-dependent rate functions $\mu_{j_0}(t):[0,\infty)\to [0,\infty)$, $j_0=1,2,\dots,k_0$. We consider the following time-changed process:
\begin{equation*}
\hat{\mathcal{M}}(t)\coloneqq M(\mathcal{M}_0(t)),\, t\ge0,
\end{equation*}
where $\{\mathcal{M}_0(t)\}_{t\ge0}$ is independent of the GCP $\{M(t)\}_{t\ge0}$. 

By using the conditional argument, the pgf of $\{\hat{\mathcal{M}}(t)\}_{t\ge0}$ can be obtained in the following form:
\begin{equation*}
\mathbb{E}(u^{\hat{\mathcal{M}}(t)})=\exp\Big(-\sum_{j_0=1}^{k_0}\rho_{j_0}(t)\Big(1-\exp\Big(-j_0\sum_{j=1}^{k}\lambda_{j}(1-u^{j})\Big)\Big)\Big), \ |u|\le1,
\end{equation*} 
where $\rho_{j_0}(t)=\int_{0}^{t}\mu_{j_0}(s)\,\mathrm{d}s$ is the cumulative rate function. Also, 
\begin{equation}\label{eqngcpcomp}
\hat{\mathcal{M}}(t)\overset{d}{=}\sum_{i=1}^{\mathcal{M}_0(t)}X_i,\, t\ge0,
\end{equation} 
where $X_i$'s are iid random variables which are equal in distribution to that of $M(1)$. 
The proof of \eqref{eqngcpcomp} follows similar lines to that of Proposition \ref{prpcgcp}.
Further, its state probabilities  $\hat{q}(n,t)=\mathrm{Pr}\{\hat{\mathcal{M}}(t)=n\}$, $n\ge0$ can be obtained as follows:
\begin{align}
\hat{q}(n,t)&=\sum_{m=0}^{\infty}\mathrm{Pr}\{M(m)=n\}\mathrm{Pr}\{\mathcal{M}_0(t)=m\}\label{Ngaa}\\
&=\sum_{\Omega(k,n)}\Big(\prod_{j=1}^{k}\frac{\lambda_j^{n_j}}{n_j!}\Big)\sum_{m=0}^{\infty}m^{z_k}e^{-\lambda m}\sum_{\Omega(k,m)}\prod_{j_0=1}^{k_0}\frac{(\rho_{j_0}(t))^{x_{j_0}}}{x_{j_0}!}e^{-\rho_{j_0}(t)}\nonumber\\
&=\sum_{\Omega(k,n)}\Big(\prod_{j=1}^{k}\frac{\lambda_j^{n_j}}{n_j!}\Big)\sum_{\substack{x_{j_0}\ge0\\1\le j_0\le k_0}}\sum_{\sum_{j_0=1}^{k_0}r_{j_0}=z_{k}}z_{k}!\prod_{j_0=1}^{k_0}\frac{(j_0x_{j_0})^{r_{j_0}}}{r_{j_0}!x_{j_0}!}(e^{-j_0\lambda}\rho_{j_0}(t))^{x_{j_0}} e^{-\rho_{j_0}(t)}\nonumber\\
&=\sum_{\Omega(k,n)}\Big(\prod_{j=1}^{k}\frac{\lambda_j^{n_j}}{n_j!}\Big)\sum_{\sum_{j_0=1}^{k_0}r_{j_0}=z_{k}}z_{k}!\prod_{j_0=1}^{k_0}\frac{j_0^{r_{j_0}}}{r_{j_0}!}e^{-\rho_{j_0}(t)(1-e^{-j_0\lambda})}\mathcal{B}_{r_{j_0}}(\rho_{j_0}(t)e^{-j_0\lambda}),\nonumber
\end{align}
where $z_k=n_1+n_2+\dots+n_k$ and $\rho_{j_0}(t)=\int_{0}^{t}\mu_{j_0}(s)\,\mathrm{d}s$.
In the second step, we have used the pmf of non-homogeneous GCP $\{\mathcal{M}_0(t)\}_{t\ge0}$ (see Kataria \textit{et al.} (2022), Section 3). 

Next, we obtain the system of differential equations that governs  the state probabilities of non-homogeneous IGCP as follows:
 
On differentiating \eqref{Ngaa} and by using Eq. (3.6) of Kataria \textit{et al.} (2022), we get
\begin{align*}
\frac{\mathrm{d}}{\mathrm{d}t}\hat{q}(n,t)&=\sum_{j_0=1}^{k_0}\mu_{j_0}(t) \Big(\sum_{m=0}^{\infty}p(n,m)\mathrm{Pr}\{\mathcal{M}_0(t)=m-j_0\}-\hat{q}(n,t)\Big)\\
&=\sum_{j_0=1}^{k_0}\mu_{j_0}(t)\Big(\sum_{r=0}^{n}p(r,j_0)\sum_{m=j_0}^{\infty}p(n-r,m-j_0)\mathrm{Pr}\{\mathcal{M}_0(t)=m-j_0\}-\hat{q}(n,t)\Big) \\
&=\sum_{j_0=1}^{k_0}\mu_{j_0}(t)\Big(\sum_{r=0}^{n}p(r,j_0)\hat{q}(n-r,t)-\hat{q}(n,t)\Big),\ n\ge0 
\end{align*}
with initial condition $\hat{q}(n,0)=\delta_{n}(0)$.

Its mean and variance are given by
\begin{equation*}
	\mathbb{E}(\hat{\mathcal{M}}(t))=\sum_{j=1}^{k}j\lambda_j\sum_{j_0=1}^{k_0}j_0\rho_{j_0}(t)
	\end{equation*}
and
\begin{equation*}
	\operatorname{Var}(\hat{\mathcal{M}}(t))=\Big(\sum_{j=1}^{k}j\lambda_{j}\Big)^2\sum_{j_0=1}^{k_0}j_0^2\rho_{j_0}(t)+\sum_{j=1}^{k}j^2\lambda_{j}\sum_{j_0=1}^{k_0}j_0\rho_{j_0}(t),
\end{equation*}
respectively.

Let $\tau_n=\inf\{t\ge0:\hat{\mathcal{M}}(t)=n\}$, $n\ge0$. Then, its distribution $F_{\tau_n}(t)=\mathrm{Pr}\{\tau_n\le t\}$ is given by
\begin{align*}
	F_{\tau_n}(t)&=\mathrm{Pr}\{\hat{\mathcal{M}}(t)\ge n\}\\
	&=1-\sum_{m=0}^{n-1}\sum_{\Omega(k,m)}\Big(\prod_{j=1}^{k}\frac{\lambda_j^{m_j}}{m_j!}\Big)\sum_{\sum_{j_0=1}^{k_0}r_{j_0}=z_{k}}z_{k}!\prod_{j_0=1}^{k_0}\frac{(j_0x_{j_0})^{r_{j_0}}}{r_{j_0}!}e^{-\rho_{j_0}(t)(1-e^{-j_0\lambda})}\mathcal{B}_{r_{j_0}}(\rho_{j_0}(t)e^{-j_0\lambda}).
\end{align*}
Observe that $F_{\tau_n}(t)$ is a distribution function if and only if $\rho_{j_0}(t)$'s satisfy the conditions stated in Remark 5 of Leonenko \textit{et al.} (2017).

Moreover, on following along the similar lines to the proof of Proposition \ref{prepmar}, it can be shown that the process $\{\hat{\mathcal{M}}(t)-\sum_{j=1}^{k}j\lambda_j\sum_{j_0=1}^{k_0}j_0\rho_{j_0}(t)\}_{t\ge0}$ is a martingale with respect to natural filtration $\mathcal{F}_t=\sigma(\hat{\mathcal{M}}(s),\, 0<s\le t)$.
%

For $v\ge0$, the increment process of  non-homogeneous IGCP is defined as
\begin{equation*}
\hat{I}(t,v)\coloneqq M(\mathcal{M}_0(t+v))-M(\mathcal{M}_0(v))\overset{d}{=}M(I_0(t,v)),\, t\ge0,
\end{equation*}
where $\{I_0(t,v)\}_{t\ge0}$ is the increment process of non-homogeneous GCP $\{\mathcal{M}_0(t)\}_{t\ge0}$ (see Kataria \textit{et al.} (2022)).
Its marginal distribution $\hat{q}_n(t,v)=\mathrm{Pr}\{\hat{I}(t,v)=n\}$, $n\ge0$ can be obtained as follows:
\begin{align}
\hat{q}_n(t,v)&=\sum_{m=0}^{\infty}\mathrm{Pr}\{I_0(t,v)=m\}\mathrm{Pr}\{M(m)=n\}\label{NIgcpon}\\
&=\sum_{\Omega(k,n)}\Big(\prod_{j=1}^{k}\frac{\lambda_j^{n_j}}{n_j!}\Big)\sum_{m=0}^{\infty}m^{z_k}e^{-m\lambda}\sum_{\Omega(k_0,m)}\Big(\prod_{j_0=1}^{k_0}\frac{(\rho_{j_0}(v,t+v))^{x_{j_0}}}{x_{j_0}!}e^{-\rho_{j_0}(v,t+v)}\Big)\nonumber\\
&=\sum_{\Omega(k,n)}\Big(\prod_{j=1}^{k}\frac{\lambda_j^{n_j}}{n_j!}\Big)\sum_{\sum_{j_0=1}^{k_0}r_{j_0}=z_k}z_{k}!\prod_{j_0=1}^{k_0}\frac{j_0^{r_{j_0}}}{r_{j_0}!}e^{-\rho_{j_0}(v,t+v)(1-e^{-j_0\lambda})}\mathcal{B}_{r_{j_0}}(e^{-j_0\lambda}\rho_{j_0}(v,t+v)),\label{lk}
\end{align}
where $\rho_{j_0}(v,t+v)=\int_{v}^{t+v}\mu_{j_0}(s)\,\mathrm{d}s$ 
and $z_k=n_1+n_2+\dots+n_k$.
\begin{remark}
On substituting $v=0$ in \eqref{lk}, we obtain the state probabilities of IGCP given in \eqref{thmpmf}. 
\end{remark}

On differentiating \eqref{NIgcpon} and by using Remark 3.5 of Kataria \textit{et al.} (2022), we get
\begin{align*}
\frac{\mathrm{d}}{\mathrm{d}t}\hat{q}_n(t,v)&=\sum_{m=0}^{\infty}p(n,m)\sum_{j_0=1}^{k_0}\mu_{j_0}(t+v)\Big(\mathrm{Pr}\{I_0(t,v)=m-j_0\}-\mathrm{Pr}\{I_0(t,v)=m\}\Big)\\
&=\sum_{j_0=1}^{k_0}\mu_{j_0}(t+v)\Big(\sum_{m=j_0}^{\infty}\sum_{r=0}^{n}p(r,j_0)p(n-r,m-j_0)\mathrm{Pr}\{I_0(t,v)=m-j_0\}-\hat{q}_n(t,v)\Big)\\
&=\sum_{j_0=1}^{k_0}\mu_{j_0}(t+v)\Big(\sum_{r=0}^{n}p(r,j_0)\hat{q}_{n-r}(t,v)-\hat{q}_n(t,v)\Big),\ n\ge0
\end{align*}
with initial condition $\hat{q}_n(0,v)=\delta_n(0)$.

\section{Some extensions of IGCP}
Here, we study few extended versions of the IGCP, namely, the compound IGCP, the multivariate IGCP and the $q$-iterated GCP. First, we consider a compound version of it.
\subsection{Compound iterated generalized counting process} Khandakar and Kataria (2024) introduced and studied a compound version of the GCP, namely, the compound generalized counting process (CGCP). We denote it by $\{W(t)\}_{t\ge0}$,. It is defined as follows (see Khandakar and Kataria (2024)):
\begin{equation*}
	W(t)\coloneqq\sum_{i=1}^{M(t)}X_i,\ t\ge0,
\end{equation*}
where $X_i$ are iid random variables independent of the GCP.

Here, we define the compound IGCP $\{Z(t)\}_{t\ge0}$ as a time-changed variant of the CGCP by subordinating  it with a GCP $\{M_0(t)\}_{t\ge0}$. That is,  
\begin{equation}\label{compigrep}
Z(t)\coloneqq W(M_0(t))=\sum_{i=1}^{M(M_0(t))}X_i,\, t\ge0,
\end{equation}
where $\{W(t)\}_{t\ge0}$ is  independent $\{M_0(t)\}_{t\ge0}$.    

 The pgf of CGCP can be obtained as follows:
\begin{align}
G_W(u,t)&=\mathbb{E}(\mathbb{E}(u^{W(t)}|M(t)))\nonumber\\
&=\sum_{m=0}^{\infty}(\mathbb{E}(u^{X_1}))^m\mathrm{Pr}\{M(t)=m\}, \, \text{(as $X_i$'s are iid random variables)}\nonumber\\
&=\exp\Big(-\sum_{j=1}^{k}\lambda_{j}t(1-(\mathbb{E}(u^{X_1}))^{j})\Big),\label{pgfcgcp}
\end{align}
where in the last step we have used \eqref{pgfmt}.
Its distribution function is given by
\begin{align}
H_W(w,t)&=\sum_{m=0}^{\infty}\mathrm{Pr}\{M(t)=m\}H_{X_1}^{*(m)}(w)\nonumber\\
&=\mathbb{I}_{\{w\ge0\}}e^{-\lambda t}+\sum_{m=1}^{\infty}\mathrm{Pr}\{M(t)=m\}H_{X_1}^{*(m)}(w),\label{cdfyt}
\end{align}
where $H_{X_1}^{*(m)}(\cdot)$ is the $m$-fold convolution of the distribution function $H_{X_1}(\cdot)$ of $X_1$.
\begin{remark}
For $k=k_0=1$, the  compound IGCP reduces to the compound iterated Poisson process. If $X_1$ has an atom at 1, that is, $H_{X_1}(x)=\mathbb{I}_{\{x\ge1\}}$ then the compound IGCP reduces to the IGCP and the compound iterated Poisson process reduces to the iterated Poisson process.
\end{remark}
For the subsequent results, we recall $\lambda$, $\mu$, $z_{k}$ and $\Omega(k,m)$ from Section \ref{secigcp}.
\begin{proposition}
The distribution function  $\hat{H}(w,t)=\mathrm{Pr}\{Z(t)\le w\}$ is given by
{\small\begin{align*}
\hat{H}(w,t)&=\mathbb{I}_{\{w\ge0\}}\exp\Big(-\sum_{j_0=1}^{k_0}\mu_{j_0}t(1-e^{-j_0\lambda})\Big)\\
&\ \  +\sum_{m=1}^{\infty}H_X^{*(m)}(w)\sum_{\Omega(k,m)}\Big(\prod_{j=1}^{k}\frac{\lambda_{j}^{m_{j}}}{m_{j}!}\Big)e^{-\mu t}\sum_{\sum_{j_0=1}^{k_0}r_{j_0}=z_{k}}z_{k}!\prod_{j_0=1}^{k_0}\frac{j_0^{r_{j_0}}}{r_{j_0}!}e^{\mu_{j_0}te^{-j_0\lambda}}\mathcal{B}_{r_{j_0}}(e^{-j_0\lambda}\mu_{j_0}t),
\end{align*}}
where $\mathcal{B}_n(x)$ is the $n$th order Bell polynomial defined in \eqref{bell}.
\end{proposition}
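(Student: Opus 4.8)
The plan is to recognize that the compound IGCP $\{Z(t)\}_{t\ge0}$ is simply a compound distribution driven by the IGCP $\{\hat{M}(t)\}_{t\ge0}$. From \eqref{compigrep} we have $Z(t)=\sum_{i=1}^{M(M_0(t))}X_i=\sum_{i=1}^{\hat{M}(t)}X_i$, where the $X_i$'s are iid and independent of both $\{M(t)\}_{t\ge0}$ and $\{M_0(t)\}_{t\ge0}$, hence independent of the IGCP $\hat{M}(t)=M(M_0(t))$. Conditioning on the value of $\hat{M}(t)$ and using the independence and identical distribution of the $X_i$'s, I would write
\begin{equation*}
\hat{H}(w,t)=\sum_{m=0}^{\infty}\mathrm{Pr}\{\hat{M}(t)=m\}\,H_X^{*(m)}(w),
\end{equation*}
where $H_X^{*(m)}(\cdot)$ is the $m$-fold convolution of the distribution function of $X_1$ and $H_X^{*(0)}(w)=\mathbb{I}_{\{w\ge0\}}$ corresponds to the empty sum concentrated at $0$. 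This is exactly the compound-distribution structure already used for the CGCP in \eqref{cdfyt}, the only change being that the driving process is now the IGCP rather than the GCP.

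Next I would isolate the $m=0$ term. Evaluating the state probability formula \eqref{IGpmf} of Theorem \ref{thmpmf} at $n=0$ forces $z_k=0$, hence all $r_{j_0}=0$ and $\mathcal{B}_0\equiv1$, giving $\hat{p}(0,t)=\exp\big(-\sum_{j_0=1}^{k_0}\mu_{j_0}t(1-e^{-j_0\lambda})\big)$. Paired with $H_X^{*(0)}(w)=\mathbb{I}_{\{w\ge0\}}$, this yields the first summand in the claimed expression. For the remaining terms $m\ge1$, I would substitute $\hat{p}(m,t)$ directly from \eqref{IGpmf} into the sum above.

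Finally, the only manipulation required is a cosmetic rewriting of the exponential factor to match the stated form. Since $\mu=\mu_1+\dots+\mu_{k_0}$, I would split
\begin{equation*}
\prod_{j_0=1}^{k_0}e^{-\mu_{j_0}t(1-e^{-j_0\lambda})}=e^{-\mu t}\prod_{j_0=1}^{k_0}e^{\mu_{j_0}te^{-j_0\lambda}},
\end{equation*}
pull the scalar $e^{-\mu t}$ outside the inner sum over $\{\sum_{j_0}r_{j_0}=z_k\}$, and regroup the factor $e^{\mu_{j_0}te^{-j_0\lambda}}$ with the Bell polynomial inside the product over $j_0$. This reproduces precisely the second summand. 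The argument is essentially bookkeeping; the main points to get right are the separate treatment of the $m=0$ contribution (which supplies the indicator term) and the correct peeling of the $e^{-\mu t}$ factor out of the product over $j_0$, after which the result follows at once.
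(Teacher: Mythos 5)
Your proof is correct and takes essentially the same route as the paper's: condition on the driving count, use the iid convolution structure $H_X^{*(m)}$, and substitute the IGCP pmf \eqref{IGpmf}, with the $m=0$ term supplying the indicator part. The only (immaterial) difference is bookkeeping order — the paper conditions on $M_0(t)$ first and reuses the CGCP distribution function \eqref{cdfyt}, evaluating the indicator term via the pgf \eqref{pgfmt}, whereas you condition on $\hat{M}(t)$ in one step and read the indicator term off \eqref{IGpmf} at $n=0$.
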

\begin{proof}
By using \eqref{compigrep} and \eqref{cdfyt}, we get
{\small\begin{align*}
\hat{H}(w,t)&=\sum_{n=0}^{\infty}\mathrm{Pr}\{M_0(t)=n\}H_W(w,n)\\
&=\mathbb{I}_{\{w\ge0\}}\sum_{n=0}^{\infty}\mathrm{Pr}\{M_0(t)=n\}e^{-\lambda n}+\sum_{n=0}^{\infty}\mathrm{Pr}\{M_0(t)=n\}\sum_{m=1}^{\infty}\mathrm{Pr}\{M(n)=m\}H_X^{*(m)}(w)\\
&=\mathbb{I}_{\{w\ge0\}}\exp\Big(-\sum_{j_0=1}^{k_0}\mu_{j_0}t(1-e^{-j_0\lambda})\Big)+\sum_{m=1}^{\infty}H_X^{*(m)}(w)\sum_{n=0}^{\infty}\mathrm{Pr}\{M_0(t)=n\}\mathrm{Pr}\{M(n)=m\},
\end{align*}}
where we have used \eqref{pgfmt} in the last step. Finally, the result follows by using \eqref{IGpmf}.
\end{proof}
\begin{remark}
If $X_i$'s are absolutely continuous random variables with probability density function (pdf) $h_X(\cdot)$ then the absolute continuous component of the pdf of $\{Z(t)\}_{t\ge0}$ is given by
{\small\begin{equation*}
\hat{h}(w,t)=\sum_{m=1}^{\infty}h_X^{*(m)}(w)\sum_{\Omega(k,m)}\Big(\prod_{j=1}^{k}\frac{\lambda_{j}^{m_{j}}}{m_{j}!}\Big)e^{-\mu t}\sum_{\sum_{j_0=1}^{k_0}r_{j_0}=z_{k}}z_{k}!\prod_{j_0=1}^{k_0}\frac{j_0^{r_{j_0}}}{r_{j_0}!}e^{\mu_{j_0}te^{-j_0\lambda}}\mathcal{B}_{r_{j_0}}(e^{-j_0\lambda}\mu_{j_0}t)
\end{equation*}}
and its discrete component is given by
\begin{align*}
\mathrm{Pr}\{Z(t)=0\}&=\sum_{m=0}^{\infty}\mathrm{Pr}\{M_0(t)=m\}\mathrm{Pr}\{W(m)=0\}=\exp\Big(-\sum_{j_0=1}^{k_0}\mu_{j_0}t(1-e^{-j_0\lambda})\Big).
\end{align*}

\end{remark}
\begin{remark}
If $X_i$'s are discrete random variables then the pmf of $\{Z(t)\}_{t\ge0}$ is given by
{\footnotesize\begin{align}
\mathrm{Pr}\{Z(t)=n\}&=\mathbb{I}_{\{n=0\}}\exp\Big(-\sum_{j_0=1}^{k_0}\mu_{j_0}t(1-e^{-j_0\lambda})\Big)\nonumber\\
&\ \  +\sum_{m=1}^{\infty}\Psi_X^{*(m)}(n)\sum_{\Omega(k,m)}\Big(\prod_{j=1}^{k}\frac{\lambda_{j}^{m_{j}}}{m_{j}!}\Big)e^{-\mu t}\sum_{\sum_{j_0=1}^{k_0}r_{j_0}=z_{k}}z_{k}!\prod_{j_0=1}^{k_0}\frac{j_0^{r_{j_0}}}{r_{j_0}!}e^{\mu_{j_0}te^{-j_0\lambda}}\mathcal{B}_{r_{j_0}}(e^{-j_0\lambda}\mu_{j_0}t),\label{mnmnm}
\end{align}}
where $\Psi_X^{*(m)}(n)=\mathrm{Pr}\{X_1+X_2+\dots+X_m=n\}$, $n\in\mathbb{Z}$.
\end{remark}
\begin{example}
Let $\{M(t)\}_{t\ge0}$ be a GCP with positive rates $\beta_1$, $\beta_2$, $\dots$, $\beta_r$ such that $X_1\overset{d}{=}M(1)$. Then, 
\begin{equation}\label{disgcconv}
\Psi_X^{*(m)}(n)=\mathrm{Pr}\{X_1+X_2+\dots+X_m=n\}=\sum_{\Theta(r,n)}\prod_{j=1}^{r}\frac{(m\beta_j)^{x_j}}{x_j!}e^{-m\beta_j},\, n\ge0,
\end{equation}
where $\Theta(r,n)=\{(x_1,x_2,\dots,x_r):\sum_{j=1}^{r}jx_j=n,\, x_j\in\mathbb{N}_0\}$.
On substituting \eqref{disgcconv} in \eqref{mnmnm}, we get
\begin{align*}
\mathrm{Pr}\{Z(t)=n\}&=	\mathbb{I}_{\{n=0\}}\exp\Big(-\sum_{j_0=1}^{k_0}\mu_{j_0}t(1-e^{-j_0\lambda})\Big)+\sum_{m=1}^{\infty}\sum_{\Theta(r,n)}\prod_{j=1}^{r}\Big(\frac{(m\beta_j)^{x_j}}{x_j!}e^{-m\beta_j}\Big)\\
	&\ \  \cdot\sum_{\Omega(k,m)}\Big(\prod_{j=1}^{k}\frac{\lambda_{j}^{m_{j}}}{m_{j}!}\Big)e^{-\mu t}\sum_{\sum_{j_0=1}^{k_0}r_{j_0}=z_{k}}z_{k}!\prod_{j_0=1}^{k_0}\frac{j_0^{r_{j_0}}}{r_{j_0}!}e^{\mu_{j_0}te^{-j_0\lambda}}\mathcal{B}_{r_{j_0}}(e^{-j_0\lambda}\mu_{j_0}t).
\end{align*}
For $n=0$, we have
\begin{align*}
\mathrm{Pr}\{Z(t)=0\}&=	\exp\Big(-\sum_{j_0=1}^{k_0}\mu_{j_0}t(1-e^{-j_0\lambda})\Big)+\sum_{m=1}^{\infty}e^{-m\beta}\sum_{\Omega(k,m)}\Big(\prod_{j=1}^{k}\frac{\lambda_{j}^{m_{j}}}{m_{j}!}\Big)e^{-\mu t}\\
&\hspace{4cm} \cdot\sum_{\sum_{j_0=1}^{k_0}r_{j_0}=z_{k}}z_{k}!\prod_{j_0=1}^{k_0}\frac{j_0^{r_{j_0}}}{r_{j_0}!}e^{\mu_{j_0}te^{-j_0\lambda}}\mathcal{B}_{r_{j_0}}(e^{-j_0\lambda}\mu_{j_0}t),
\end{align*}
where $\beta=\beta_1+\beta_2+\dots+\beta_r$.
\end{example}
\begin{example}
Let $X_1$ has geometric distribution with parameter $p\in(0,1]$. Then,
\begin{equation*}
\Psi_{X_1}^{*(m)}=\binom{n-1}{m-1}p^m(1-p)^{n-m},\  n\ge m.
\end{equation*}
Thus,
\begin{align*}
\mathrm{Pr}\{Z(t)=n\}&=	\mathbb{I}_{\{n=0\}}\exp\Big(-\sum_{j_0=1}^{k_0}\mu_{j_0}t(1-e^{-j_0\lambda})\Big)+\sum_{m=1}^{\infty}\binom{n-1}{m-1}p^m(1-p)^{n-m}\\
	&\ \  \cdot\sum_{\Omega(k,m)}\Big(\prod_{j=1}^{k}\frac{\lambda_{j}^{m_{j}}}{m_{j}!}\Big)e^{-\mu t}\sum_{\sum_{j_0=1}^{k_0}r_{j_0}=z_{k}}z_{k}!\prod_{j_0=1}^{k_0}\frac{j_0^{r_{j_0}}}{r_{j_0}!}e^{\mu_{j_0}te^{-j_0\lambda}}\mathcal{B}_{r_{j_0}}(e^{-j_0\lambda}\mu_{j_0}t).
\end{align*}
\end{example}
Next, we obtain the finite dimensional distribution of compound IGCP.
\begin{theorem}
Let $0=t_0\le t_1\le\dots\le t_n=t$ be a partition of $[0,t]$ and $h_X^{*(m)}(\cdot)$ denotes the $m$-fold convolution of the density function $h_{X_1}(\cdot)$ of $X_1$. Then, the finite dimensional distribution of compound IGCP has the following form:
{\small\begin{align}\label{conv}
\hat{H}_{Z(t_1), Z(t_2),\dots, Z(t_n)}(x_1,x_2,\dots,x_n)&=\sum_{\substack{m_l\ge0\\ l=1,2,\dots,n}}\Big(\prod_{l=1}^{n}\hat{p}(m_l,\Delta t_l)\Big)\nonumber\\
&\ \  \cdot\int_{-\infty}^{u_1}\int_{-\infty}^{u_2}\dots \int_{-\infty}^{u_n}\Big(\prod_{r=1}^{n}h_{X_{1}}^{*(m_r)}(y_r)\Big)\mathrm{d}y_n\mathrm{d}y_{n-1}\dots\mathrm{d}y_1,
\end{align}}
where $\Delta t_l=t_l-t_{l-1}$ and $u_r=x_r-(y_1+y_2+\dots+y_{r-1})$.
\end{theorem}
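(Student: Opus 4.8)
The plan is to exploit the fact, noted just after Theorem \ref{thmpmf}, that the IGCP $\{\hat{M}(t)\}_{t\ge0}$ is a L\'evy process, hence it has independent and stationary increments. First I would condition on the joint increments of the subordinated counting process over the partition. For $l=1,2,\dots,n$, set $\hat{M}(t_l)-\hat{M}(t_{l-1})=m_l$; by the independent and stationary increments of the IGCP these events are independent and $\mathrm{Pr}\{\hat{M}(t_l)-\hat{M}(t_{l-1})=m_l\}=\hat{p}(m_l,\Delta t_l)$, where $\Delta t_l=t_l-t_{l-1}$ and $\hat{p}$ is the state probability of \eqref{IGpmf}. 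Summing over all admissible tuples $(m_1,\dots,m_n)$ via the law of total probability will produce the outer factor $\prod_{l=1}^{n}\hat{p}(m_l,\Delta t_l)$ appearing in \eqref{conv}.

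Next I would pass from the counts to the compound sums. Writing $Z(t)=\sum_{i=1}^{\hat{M}(t)}X_i$ from \eqref{compigrep}, the increment $Y_l\coloneqq Z(t_l)-Z(t_{l-1})=\sum_{i=\hat{M}(t_{l-1})+1}^{\hat{M}(t_l)}X_i$ is, conditionally on $\hat{M}(t_l)-\hat{M}(t_{l-1})=m_l$, a sum of $m_l$ iid copies of $X_1$, so it has density $h_X^{*(m_l)}(\cdot)$. Because the index blocks $\{\hat{M}(t_{l-1})+1,\dots,\hat{M}(t_l)\}$ are disjoint once the increments are fixed, and the $X_i$'s are iid and independent of $\{\hat{M}(t)\}_{t\ge0}$, the variables $Y_1,\dots,Y_n$ are conditionally independent with joint density $\prod_{r=1}^{n}h_X^{*(m_r)}(y_r)$.

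The decisive step is to rewrite the event $\{Z(t_1)\le x_1,\dots,Z(t_n)\le x_n\}$ in terms of the $Y_l$'s. Since $Z(t_r)=Y_1+Y_2+\dots+Y_r$, the constraint $Z(t_r)\le x_r$ becomes $y_r\le x_r-(y_1+\dots+y_{r-1})=u_r$, which is exactly the recursively defined upper limit in \eqref{conv}. Integrating the conditional joint density over the nested region $\{y_r\le u_r,\ r=1,\dots,n\}$ yields the iterated integral $\int_{-\infty}^{u_1}\cdots\int_{-\infty}^{u_n}\prod_{r=1}^{n}h_X^{*(m_r)}(y_r)\,\mathrm{d}y_n\cdots\mathrm{d}y_1$, and multiplying by $\prod_{l=1}^{n}\hat{p}(m_l,\Delta t_l)$ and summing over the increments completes the argument. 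I expect the main obstacle to be the bookkeeping in this last step: one must verify that the cumulative conditions $Z(t_r)\le x_r$ translate precisely into the recursively coupled limits $u_r=x_r-(y_1+\dots+y_{r-1})$, and that the chosen order of integration $\mathrm{d}y_n\cdots\mathrm{d}y_1$ is consistent with this coupling, since each upper limit $u_r$ depends on the outer integration variables $y_1,\dots,y_{r-1}$.
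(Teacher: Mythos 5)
Your proposal is correct and follows essentially the same route as the paper's proof: condition on the increments $\hat{M}(t_l)-\hat{M}(t_{l-1})=m_l$ (using independence and stationarity of the IGCP increments to factor their joint law into $\prod_{l}\hat{p}(m_l,\Delta t_l)$), observe that the corresponding blocks of $X_i$'s are conditionally independent with densities $h_{X_1}^{*(m_l)}$, and convert the cumulative constraints $Z(t_r)\le x_r$ into the nested limits $u_r=x_r-(y_1+\dots+y_{r-1})$. The paper writes the block sums as $X(m_1),X(m_2),\dots,X(m_n)$ and performs the final change of variables explicitly, but this is only a notational difference from your increments $Y_l$.
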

\begin{proof}
As $\{\hat{M}(t)\}_{t\ge0}$ has independent and stationary increments, we have $\hat{M}(t_r)=$ $\hat{M}(\Delta t_1)+\hat{M}(\Delta t_2)+\dots+\hat{M}(\Delta t_r)$, $r=1,2,\dots,n$. Let 
\begin{align*}
X(m_1)&=X_1+X_2+\dots+X_{m_{1}},\\
X(m_2)&=X_{m_{1}+1}+X_{m_{1}+2}+\dots+X_{m_{1}+m_{2}},\\
&\ \vdots\\
X(m_n)&=X_{m_{1}+m_{2}+\dots+m_{n-1}+1}+X_{m_{1}+m_{2}+\dots+m_{n-1}+2}+\dots+X_{m_{1}+m_{2}+\dots+m_{n}}.
\end{align*}
Then, 
\begin{align*}
\hat{H}&_{Z(t_1), Z(t_2),\dots, Z(t_n)}(x_1,x_2,\dots,x_n)\\
&=\mathrm{Pr}\{Z(t_1)\le x_1, Z(t_2)\le x_2,\dots, Z(t_n)\le x_n\}\\ &=\sum_{\substack{m_l\ge0\\l=1,2,\dots,n}}\mathrm{Pr}\Big\{\sum_{i=1}^{m_1}X_i\le x_1, \sum_{i=1}^{m_1+m_2}X_i\le x_2,\dots, \sum_{i=1}^{\sum_{l=1}^{n}m_l}X_i\le x_n\Big\}\prod_{l=1}^{n}\hat{p}(m_l,\Delta t_l)\\
&=\sum_{\substack{m_l\ge0\\l=1,2,\dots,n}}\mathrm{Pr}\Big\{X(m_1)\le x_1, X(m_1)+X(m_2)\le x_2, \dots, \sum_{l=1}^{n}X(m_l)\le x_n\Big\}\prod_{l=1}^{n}\hat{p}(m_l,\Delta t_l)\\
&=\sum_{\substack{m_l\ge0\\l=1,2,\dots,n}}\Big(\prod_{l=1}^{n}\hat{p}(m_l,\Delta t_l)\Big)\int_{-\infty}^{x_1}\int_{-\infty}^{x_2 -y_1}\dots \int_{-\infty}^{x_n-\sum_{l=1}^{n-1}y_l}\Big(\prod_{r=1}^{n}h_{X_{1}}^{*(m_r)}(y_r)\Big)\mathrm{d}y_n\mathrm{d}y_{n-1}\dots\mathrm{d}y_1.
\end{align*}
Finally, the change of variables gives the required result.
\end{proof}
\begin{remark}
For $n=1$, \eqref{conv} reduces to the marginal distribution $\hat{H}_{Z(t)}(x)=\mathrm{Pr}\{Z(t)\le x\}$ of compound IGCP, that is,
\begin{equation*}
\hat{H}_{Z(t)}(x)=\sum_{m\ge0}\hat{p}(m,t)\int_{-\infty}^{x}h_{X_1}^{*(m)}(y)\mathrm{d}y.
\end{equation*}
\end{remark}
\begin{example}
If $X_1$ has exponential distribution with parameter $\lambda>0$ then the marginal distribution of compound IGCP is given by
\begin{equation*}
\hat{H}_{Z(t)}(x)=\sum_{m\ge0}\hat{p}(m,t)\int_{-\infty}^{x}h_{X_1}^{*(m)}(y)\mathrm{d}y=\sum_{m\ge0}\frac{\gamma(m,\lambda x)}{(m-1)!}\hat{p}(m,t), 
\end{equation*}
where $\gamma(s,t)=\int_{0}^{t}e^{-x}x^{s-1}\mathrm{d}x$, $t\ge0$ is the incomplete gamma function.
\end{example}

By using \eqref{compigrep} and \eqref{pgfcgcp}, the pgf $\hat{G}_{Z(t)}(u)=\mathbb{E}(u^{Z(t)})$ can be obtained as follows:
\begin{align*}
\hat{G}_{Z(t)}(u)&=\mathbb{E}(\mathbb{E}(u^{W(M_0(t))}|M_0(t)))\\
&=\sum_{m=0}^{\infty}\mathrm{Pr}\{M_0(t)=m\}\mathbb{E}(u^{W(m)})\\
&=\sum_{m=0}^{\infty}\mathrm{Pr}\{M_0(t)=m\}\exp\Big(-\sum_{j=1}^{k}\lambda_{j}m\Big(1-(\mathbb{E}(u^{X_1}))^{j}\Big)\Big)\\
&=\exp\Big(-\sum_{j_0=1}^{k_0}\mu_{j_0}t\Big(1-\exp\Big(-j_0\sum_{j=1}^{k}\lambda_{j}(1-(\mathbb{E}(u^{X_1}))^{j})\Big)\Big)\Big), \ |u|\le1.
\end{align*}
Thus, it satisfies
\begin{equation*}
\frac{\partial}{\partial t}\hat{G}_{Z(t)}(u)=-\sum_{j_0=1}^{k_0}\mu_{j_0}\Big(1-\exp\Big(-j_0\sum_{j=1}^{k}\lambda_{j}(1-(\mathbb{E}(u^{X_1}))^{j})\Big)\Big)\hat{G}_{Z(t)}(u),\ \  \hat{G}_{Z(0)}(u)=1.
\end{equation*}

By using Theorem 2.1 of Leonenko \textit{et al.} (2014), the mean and variance of compound IGCP are given by
\begin{equation*}
\mathbb{E}(Z(t))=\mathbb{E}(W(1))\mathbb{E}(M_0(t))=\sum_{j=1}^{k}j\lambda_{j}\sum_{j_0=1}^{k_0}j_0\mu_{j_0}t\mathbb{E}(X_1)
\end{equation*}
and
\begin{align*}
\operatorname{Var}(Z(t))&=(\mathbb{E}(W(1)))^2\operatorname{Var}(M_0(t))+\operatorname{Var}(W(1))\mathbb{E}(M_0(t))\\
&=\Big(\sum_{j=1}^{k}j\lambda_{j}\mathbb{E}(X_1)\Big)^2\sum_{j_0=1}^{k_0}j_0^2\mu_{j_0}t+\sum_{j_0=1}^{k_0}j_0\mu_{j_0}t\Big(\operatorname{Var}(X_1)\sum_{j=1}^{k}j\lambda_{j}+(\mathbb{E}(X_1))^2\sum_{j=1}^{k}j^2\lambda_{j}\Big),
\end{align*}
respectively. Here, we have used the mean and variance of CGCP (see Khandakar and Kataria (2024), Section 4).
\begin{remark}
The following limiting result holds true:
\begin{equation*}
\lim_{t\to\infty}\frac{Z(t)}{t}=\sum_{j=1}^{k}j\lambda_{j}\sum_{j_0=1}^{k_0}j_0\mu_{j_0}t\mathbb{E}(X_1)\ \  \text{with probability $1$}
\end{equation*}
which follows by using the strong law of large numbers.
\end{remark}
\begin{proposition}\label{prpcgpart}
Let $X_i$'s be iid random variables such that $\mathrm{Pr}\{X_1=i\}=\alpha_i$, $i=0$, $1$, $2$, $\dots$ and  $D(t)=\sum_{i=1}^{\hat{M}(t)}X_i$. Then, the pgf $\hat{G}_{D(t)}(u)=\mathbb{E}(u^{D(t)})$ is given by
\begin{equation*}
\hat{G}_{D(t)}(u)=\exp\Big(-t\sum_{j_0=1}^{k_0}\mu_{j_0}\Big(1-\exp\Big(-j_0\sum_{j=1}^{k}\lambda_j\sum_{i=1}^{\infty}\alpha_i^{*(j)}(1-u^i)\Big)\Big)\Big), \ \ |u|\le1,
\end{equation*}
where $\alpha_i^{*(j)}=\displaystyle\sum_{\substack{\sum_{m=1}^{j}r_m=i\\r_m\in\mathbb{N}_0}}\alpha_{r_1}\alpha_{r_2}\dots\alpha_{r_j}$.
\end{proposition}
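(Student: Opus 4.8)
The plan is to condition on the value of the IGCP $\hat{M}(t)$, recognize the result as the pgf of the IGCP evaluated at the pgf of $X_1$, and then reduce the inner exponent to the claimed form using a convolution identity.

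First I would introduce $\phi(u)=\mathbb{E}(u^{X_1})=\sum_{i=0}^{\infty}\alpha_i u^i$, the pgf of $X_1$. Since the $X_i$ are iid and independent of $\hat{M}(t)$, conditioning on $\hat{M}(t)$ gives
\begin{equation*}
\hat{G}_{D(t)}(u)=\mathbb{E}\big(\mathbb{E}(u^{\sum_{i=1}^{\hat{M}(t)}X_i}\,|\,\hat{M}(t))\big)=\mathbb{E}\big(\phi(u)^{\hat{M}(t)}\big)=\hat{G}(\phi(u),t),
\end{equation*}
exactly as in the derivation of \eqref{pgfcgcp}. Substituting $\phi(u)$ in place of $u$ into the IGCP pgf \eqref{IGDCpgf} then yields
\begin{equation*}
\hat{G}_{D(t)}(u)=\exp\Big(-t\sum_{j_0=1}^{k_0}\mu_{j_0}\Big(1-\exp\Big(-j_0\sum_{j=1}^{k}\lambda_j\big(1-\phi(u)^{j}\big)\Big)\Big)\Big).
\end{equation*}

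The remaining step is to show that $1-\phi(u)^{j}=\sum_{i=1}^{\infty}\alpha_i^{*(j)}(1-u^{i})$. Expanding the $j$th power of the power series gives $\phi(u)^{j}=\sum_{i=0}^{\infty}\alpha_i^{*(j)}u^{i}$, where $\alpha_i^{*(j)}$ is precisely the $j$-fold convolution coefficient defined in the statement. Because the $\alpha_i$ are probabilities, $\phi(1)=1$, hence $\sum_{i=0}^{\infty}\alpha_i^{*(j)}=\phi(1)^{j}=1$. Therefore
\begin{equation*}
1-\phi(u)^{j}=\sum_{i=0}^{\infty}\alpha_i^{*(j)}-\sum_{i=0}^{\infty}\alpha_i^{*(j)}u^{i}=\sum_{i=0}^{\infty}\alpha_i^{*(j)}(1-u^{i})=\sum_{i=1}^{\infty}\alpha_i^{*(j)}(1-u^{i}),
\end{equation*}
the last equality holding since the $i=0$ term carries the factor $1-u^{0}=0$. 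Inserting this into the exponent produces the asserted expression for $\hat{G}_{D(t)}(u)$.

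There is no genuine obstacle here: the only non-mechanical point is the normalization $\phi(1)=1$, which is what lets the constant term of $\phi(u)^{j}$ be rewritten as the sum $\sum_{i\ge1}\alpha_i^{*(j)}$ so that the difference collapses into the $(1-u^{i})$ form. Convergence of all series for $|u|\le1$ is automatic since $\phi$ is a probability generating function, so termwise manipulation is justified. I would keep the argument at this level of detail, as the convolution bookkeeping is entirely routine.
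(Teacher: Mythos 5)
Your proof is correct and follows essentially the same route as the paper's (Appendix A3): both substitute the pgf $\phi(u)=\mathbb{E}(u^{X_1})$ into the IGCP pgf \eqref{IGDCpgf}, identify the coefficients of $\phi(u)^j$ as the $j$-fold convolution probabilities $\alpha_i^{*(j)}$, and use the normalization $\sum_{i=0}^{\infty}\alpha_i^{*(j)}=1$ to collapse the exponent into the $(1-u^i)$ form, with the $i=0$ term vanishing. The only cosmetic difference is that the paper identifies $\phi(u)^j=\mathbb{E}(u^{X_1+\dots+X_j})$ probabilistically, while you expand the power series product directly; these are the same convolution identity.
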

\begin{proof}
See Appendix A3.
\end{proof}

\begin{remark}\label{remcg}
Let $\{Z_i\}_{i\ge1}$ be a sequence of iid random variables such that $\mathrm{Pr}\{Z_1=i\}=\frac{1}{\lambda}\sum_{j=1}^{k}\lambda_j\alpha_i^{*(j)}$. Also, let $\{M_0(t)\}_{t\ge0}$ be a GCP with positive rates $\mu_1$, $\mu_2$, $\dots$, $\mu_{k_0}$ and $\{Y_i\}_{i\ge1}$ be a sequence of iid random variables such that $Y_1\overset{d}{=}\sum_{i=1}^{N(1)}Z_i$, where $\{N(t)\}_{t\ge0}$ is a Poisson process with parameter $\lambda$ and independent of $Z_i$'s. Then, we have
\begin{equation*}
D(t)\overset{d}{=}\sum_{i=1}^{M_0(t)}Y_i,
\end{equation*}
where $Y_i$'s are independent of the GCP $\{M_0(t)\}_{t\ge0}$.
\end{remark}
Next, we give a martingale characterization of the compound IGCP $\{D(t)\}_{t\ge0}$ defined in Proposition \ref{prpcgpart}.
\begin{theorem}
	Let $\{X_i\}_{i\ge1}$ be a sequence of iid random variables with non-negative integer support. Then, the process $D(t)=\sum_{i=1}^{\hat{M}(t)}X_i$, $t\ge0$, where $X_i$'s are independent of $\{\hat{M}(t)\}_{t\ge0}$ is a compound IGCP iff $\{D(t)-\sum_{j=1}^{k}j\lambda_j$ $\sum_{j_0=1}^{k_0}j_0\mu_{j_0}t$ $\mathbb{E}(X_1)\}_{t\ge0}$ is a $\{\mathcal{F}_t\}_{t\ge0}$-martingale, where $\mathcal{F}_t=\sigma(D(s),\ s\le t)$.
\end{theorem}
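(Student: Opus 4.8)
The plan is to follow exactly the route that produced the martingale characterization of the IGCP itself in Proposition \ref{prepmar}: reduce the compound IGCP to an ordinary compound Poisson process and then invoke the martingale characterization of compound Poisson processes in Theorem 5.2 of Zhang and Li (2016). The only new feature compared with Proposition \ref{prepmar} is the extra layer of compounding by the $X_i$'s, so the first task is to absorb that layer into a single compound Poisson representation, after which the characterization should transfer verbatim.

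First I would invoke the distributional identity $D(t)\overset{d}{=}\sum_{i=1}^{M_0(t)}Y_i$ furnished by Remark \ref{remcg}, which already displays $\{D(t)\}_{t\ge0}$ as a compound GCP driven by the subordinating GCP $\{M_0(t)\}_{t\ge0}$ with iid summands $Y_i$. Since a compound GCP is equal in distribution to a compound Poisson process (Remark 4.2 of Khandakar and Kataria (2024), used precisely as in the proof of Proposition \ref{prepmar}), I would conclude that $\{D(t)\}_{t\ge0}$ is equal in distribution to a compound Poisson process with an explicitly identifiable jump law; its probability generating function is already the one recorded in Proposition \ref{prpcgpart}. This reduction is the structural heart of the argument.

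Next I would pin down the centering constant and check it is the mean. By Wald's identity together with \eqref{IGCPMEAN}, $\mathbb{E}(D(t))=\mathbb{E}(\hat{M}(t))\mathbb{E}(X_1)=\sum_{j=1}^{k}j\lambda_j\sum_{j_0=1}^{k_0}j_0\mu_{j_0}t\,\mathbb{E}(X_1)$, so the process appearing in the statement is exactly $\{D(t)-\mathbb{E}(D(t))\}_{t\ge0}$, which is the compensated version that must be tested for the martingale property. With the compound Poisson representation and this mean in hand, Theorem 5.2 of Zhang and Li (2016) yields the forward implication at once: a compound IGCP is a compound Poisson process, so its compensated version is an $\{\mathcal{F}_t\}_{t\ge0}$-martingale.

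The step I expect to be the main obstacle is the converse. Here one is given only that $D(t)=\sum_{i=1}^{\hat{M}(t)}X_i$ with fixed iid non-negative integer $X_i$ independent of the point process $\{\hat{M}(t)\}_{t\ge0}$, together with the martingale hypothesis, and must recover that $\{\hat{M}(t)\}_{t\ge0}$ is genuinely an IGCP. Via Theorem 5.2 of Zhang and Li (2016) the martingale hypothesis forces $\{D(t)\}_{t\ge0}$ to be the specific compound Poisson process identified above; the delicate point is then to read this back, either by transferring the Zhang--Li \emph{iff} directly at the level of laws (so that ``$D$ is a compound Poisson process with the matching parameters'' is the same statement as ``$D$ is a compound IGCP''), or by deconvolving the fixed compounding law of the $X_i$'s to recover that the generating function of $\{\hat{M}(t)\}_{t\ge0}$ matches \eqref{IGDCpgf}, and then promoting equality of one-dimensional laws to the process level by establishing independent and stationary increments as in Proposition \ref{prpexpmart}. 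The non-negative integer support of the $X_i$'s is what one would rely on to make this identification unambiguous.
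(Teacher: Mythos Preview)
Your proposal is correct and follows essentially the same route as the paper: invoke Remark \ref{remcg} to display $\{D(t)\}_{t\ge0}$ as a compound GCP, reduce to a compound Poisson process via Remark 4.2 of Khandakar and Kataria (2024), and then appeal to Theorem 5.2 of Zhang and Li (2016) for the martingale \emph{iff}. The paper's proof is precisely this three-line reduction; your added computation of the centering constant via Wald's identity and \eqref{IGCPMEAN} and your discussion of the converse are elaborations the paper omits, but the underlying argument is the same.
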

\begin{proof}
From Remark \ref{remcg}, the compound IGCP $\{D(t)\}_{t\ge0}$ is equal in distribution to a compound GCP. So, the proof follows on using Remark 4.2 of Khandakar and Kataria (2024) and Theorem 5.2 of Zhang and Li (2016).
\end{proof}

\subsection{Multivariate IGCP}
Let $\{M_1(t)\}_{t\ge0}$, $\{M_2(t)\}_{t\ge0}$, $\dots$, $\{M_q(t)\}_{t\ge0}$ be $q$ independent GCPs such that $\{M_i(t)\}_{t\ge0}$ performs $k_i$ kinds of jumps of size $j_i$ with positive rate $\lambda_{i{j_i}}$, $j_i=1,2,\dots,k_i$. Kataria and Dhillon (2024) defined a multivariate version of the GCP as follows:
\begin{equation*}
	\bar{M}(t)\coloneqq(M_1(t),M_2(t), \dots, M_q(t)), \, t\ge0.
\end{equation*}
It is called the multivariate GCP.

 Here, we study a multivariate version of the IGCP which is a time-changed variant of the multivariate GCP. It is defined as follows:
\begin{equation*}
\bar{\mathcal{M}}(t)\coloneqq(M_1(M_0(t)),M_2(M_0(t)), \dots, M_q(M_0(t))), \, t\ge0,
\end{equation*}
where $\{M_0(t)\}_{t\ge0}$ is a GCP with positive rates $\mu_{j_0}$, $j_0=1,2,\dots,k_0$ and it is independent of $\{M_i(t)\}_{t\ge0}$.  We call it the multivariate IGCP.

The following notations will be used:
Let $\bar{0}=(0,0,\dots,0)$, $\bar{1}=(1,1,\dots,1)$ and $\bar{n}=(n_1,n_2,\dots,n_q)$ be $q$-tuple vectors. For all $i=1,2,\dots,q$, if  $n_i\ge m_i$ then we denote it by $\bar{n}\ge\bar{m}$. Also, $\bar{n}\succ \bar{m}$ ( $\bar{n}\prec\bar{m}$) stand for $n_i\ge m_i$ ($n_i\leq m_i$) for all $i=1,2,\dots,q$ and $\bar{n}\neq \bar{m}$. 

Let $\Lambda=\sum_{i=1}^{q}\sum_{j_i=1}^{k_i}\lambda_{ij_i}$ and $\Omega(k_i,m_i)=\{(x_{i1},x_{i2}$, $\dots$, $x_{ik_i}):\sum_{j_i=1}^{k_i}j_ix_{ij_i}=m_i,\, x_{ij_i}\in\mathbb{N}_0\}$.

In an infinitesimal time interval of length $h$ such that $o(h)/h\to0$ as $h\to0$, the transition probabilities of multivariate IGCP are given by
{\footnotesize\begin{equation*}
\mathrm{Pr}\{\bar{\mathcal{M}}(t+h)=\bar{n}+\bar{m}|\bar{\mathcal{M}}(t)=\bar{n}\}=\begin{cases}
1-h\sum_{j_0=1}^{k_0}\mu_{j_0}(1-e^{-j_0\Lambda})+o(h),\, \bar{m}=\bar{0},\vspace{.2cm}\\
h\sum_{j_0=1}^{k_0}\mu_{j_0}e^{-j_0\Lambda}\displaystyle\sum_{\substack{\Omega(k_i,m_i)\\i=1,2,\dots,q}}\prod_{i=1}^{q}\prod_{j_i=1}^{k_i}\frac{(j_0\lambda_{ij_i})^{x_{ij_i}}}{x_{ij_i}!}+o(h),\, \bar{m}\succ\bar{0}.
\end{cases}
\end{equation*}}
\begin{remark}
For $k_0=k_1=\dots=k_q=1$, the multivariate IGCP reduces to the multivariate version of iterated Poisson process.
\end{remark}
Let $p_0(\cdot, t)$ be the pmf of $\{M_0(t)\}_{t\ge0}$ and $G(\bar{u},t)$ be the pgf of multivarite GCP. Then, for $|u_i|\le1$, the pgf $\hat{G}_{\bar{\mathcal{M}}}(\bar{u},t)=\mathbb{E}(u_1^{M_1(M_0(t))}u_2^{M_2(M_0(t))}$ $\dots$ $u_q^{M_q(M_0(t))})$ of multivariate IGCP can be obtained as follows:
\begin{align*}
\hat{G}_{\bar{\mathcal{M}}}(\bar{u},t)&=\sum_{m=0}^{\infty}G(\bar{u},m)p_0(m,t)\\
&=\sum_{m=0}^{\infty}\exp\Big(-m\sum_{i=1}^{q}\sum_{j_i=1}^{k_i}\lambda_{ij_i}(1-u_i^{j_i})\Big)p_0(m,t)\\
&=\exp\Big(-\sum_{j_0=1}^{k_0}\mu_{j_0}t\Big(1-\exp(-j_0\sum_{i=1}^{q}\sum_{j_i=1}^{k_i}\lambda_{ij_i}(1-u_i^{j_i})\Big)\Big),
\end{align*}
where the penultimate step follows by using Eq. (3.5) of Kataria and Dhillon (2024), and the last step follows by using \eqref{mgfmt}. Thus, 
{\small\begin{equation}\label{MIGCPpgfde}
\frac{\partial}{\partial t}\hat{G}_{\bar{\mathcal{M}}}(\bar{u},t)=-\Big(\sum_{j_0=1}^{k_0}\mu_{j_0}\Big(1-\exp(-j_0\sum_{i=1}^{q}\sum_{j_i=1}^{k_i}\lambda_{ij_i}(1-u_i^{j_i})\Big)\hat{G}_{\bar{\mathcal{M}}}(\bar{u},t),\ \ \hat{G}_{\bar{\mathcal{M}}}(\bar{u},0)=1.
\end{equation}}

\begin{proposition}\label{thmappen}
The pmf $\hat{p}_{\bar{\mathcal{M}}}(\bar{n},t)$, $\bar{n}\ge\bar{0}$ of $\{\bar{\mathcal{M}}(t)\}_{t\ge0}$ solves the following system of differential equations:
{\small\begin{equation*}
\frac{\mathrm{d}}{\mathrm{d}t}\hat{p}_{\bar{\mathcal{M}}}(\bar{n},t)=-\sum_{j_0=1}^{k_0}\mu_{j_0}(1-e^{-j_0\lambda})\hat{p}_{\bar{\mathcal{M}}}(\bar{n},t)+\sum_{\bar{m}\succ\bar{0}}\hat{p}_{\bar{\mathcal{M}}}(\bar{n}-\bar{m},t)\sum_{j_0=1}^{k_0}\mu_{j_0}e^{-j_0\Lambda}\sum_{\substack{\Omega(k_i,m_i)\\i=1,2,\dots,q}}\prod_{i=1}^{q}\prod_{j_i=1}^{k_i}\frac{(j_0\lambda_{ij_i})^{x_{ij_i}}}{x_{ij_i}!}
\end{equation*}}
with initial condition $\hat{p}_{\bar{\mathcal{M}}}(\bar{n},0)=\delta_{\bar{n}}(\bar{0})$. 
\end{proposition}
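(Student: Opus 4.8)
The plan is to read off the system of difference-differential equations directly from the pgf equation \eqref{MIGCPpgfde}, paralleling the standard passage from a generating-function equation to the governing equations for the individual state probabilities. Recall that $\hat{G}_{\bar{\mathcal{M}}}(\bar{u},t)=\sum_{\bar{n}\ge\bar{0}}\hat{p}_{\bar{\mathcal{M}}}(\bar{n},t)\prod_{i=1}^{q}u_i^{n_i}$, so the left-hand side of \eqref{MIGCPpgfde} is $\sum_{\bar{n}\ge\bar{0}}\frac{\mathrm{d}}{\mathrm{d}t}\hat{p}_{\bar{\mathcal{M}}}(\bar{n},t)\prod_{i=1}^{q}u_i^{n_i}$. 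It then suffices to expand the right-hand side of \eqref{MIGCPpgfde} as a power series in $u_1,\dots,u_q$ and compare the coefficient of $\prod_{i=1}^{q}u_i^{n_i}$ on both sides.

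First I would rewrite the inner exponential in \eqref{MIGCPpgfde} as a product: since $\sum_{i=1}^{q}\sum_{j_i=1}^{k_i}\lambda_{ij_i}=\Lambda$,
\begin{equation*}
\exp\Big(-j_0\sum_{i=1}^{q}\sum_{j_i=1}^{k_i}\lambda_{ij_i}(1-u_i^{j_i})\Big)=e^{-j_0\Lambda}\prod_{i=1}^{q}\prod_{j_i=1}^{k_i}\exp\big(j_0\lambda_{ij_i}u_i^{j_i}\big).
\end{equation*}
Expanding each factor as $\sum_{x_{ij_i}\ge0}(j_0\lambda_{ij_i})^{x_{ij_i}}u_i^{j_ix_{ij_i}}/x_{ij_i}!$ and collecting the terms whose total exponent in $u_i$ equals $m_i$, the coefficient of $\prod_{i=1}^{q}u_i^{m_i}$ in this product is precisely $\sum_{\Omega(k_i,m_i),\,i=1,\dots,q}\prod_{i=1}^{q}\prod_{j_i=1}^{k_i}(j_0\lambda_{ij_i})^{x_{ij_i}}/x_{ij_i}!$, the constraint $\Omega(k_i,m_i)$ enforcing $\sum_{j_i}j_ix_{ij_i}=m_i$. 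This exhibits the Cauchy-product structure that will generate the $\bar{m}$-sum in the statement.

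Next I would substitute this expansion into the right-hand side of \eqref{MIGCPpgfde} and split the factor $1-\exp(\cdots)$, so that the constant ($\bar{m}=\bar{0}$) part combines with the $-\mu\hat{G}_{\bar{\mathcal{M}}}$ contribution to yield the coefficient $-\sum_{j_0}\mu_{j_0}(1-e^{-j_0\Lambda})$ of $\hat{p}_{\bar{\mathcal{M}}}(\bar{n},t)$, matching the first term on the right. The remaining $\bar{m}\succ\bar{0}$ terms form a Cauchy product with $\hat{G}_{\bar{\mathcal{M}}}(\bar{u},t)$; matching the coefficient of $\prod_i u_i^{n_i}$ forces the shift $\bar{n}\mapsto\bar{n}-\bar{m}$ in the surviving convolution (with $\hat{p}_{\bar{\mathcal{M}}}(\bar{n}-\bar{m},t)$ understood to vanish when $\bar{n}-\bar{m}$ has a negative component), reproducing the displayed equation. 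The initial condition $\hat{p}_{\bar{\mathcal{M}}}(\bar{n},0)=\delta_{\bar{n}}(\bar{0})$ follows from $\hat{G}_{\bar{\mathcal{M}}}(\bar{u},0)=1$ in \eqref{MIGCPpgfde}. An entirely parallel derivation via the transition probabilities displayed just before the proposition --- writing $\hat{p}_{\bar{\mathcal{M}}}(\bar{n},t+h)$ as a one-step decomposition, dividing by $h$, and letting $h\to0$ --- is also available and mirrors the proof of Proposition \ref{prpde}.

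The main obstacle is purely combinatorial bookkeeping: correctly reading off the coefficient of $\prod_i u_i^{m_i}$ from the multivariate product of exponentials, and checking that the constant term ($\bar{m}=\bar{0}$, where each $\Omega(k_i,0)$ forces every $x_{ij_i}=0$ so the product equals $1$) merges cleanly with the $-\mu\hat{G}_{\bar{\mathcal{M}}}$ term to give $-\sum_{j_0}\mu_{j_0}(1-e^{-j_0\Lambda})$. No analytic subtlety arises, as all series converge absolutely for $|u_i|\le1$, so termwise differentiation and rearrangement are justified.
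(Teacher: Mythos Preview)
Your proposal is correct and matches the paper's own arguments: the pgf-expansion route you give is precisely the alternate proof in Appendix~A2 (your factorization $e^{-j_0\Lambda}\prod_{i,j_i}\exp(j_0\lambda_{ij_i}u_i^{j_i})$ is a slightly more direct version of the paper's Taylor-plus-multinomial expansion), and the transition-probability derivation you mention at the end is the paper's primary proof, which it omits as being parallel to Proposition~\ref{prpde}. Note also that the $e^{-j_0\lambda}$ in the displayed equation of the proposition should read $e^{-j_0\Lambda}$, as you correctly have it.
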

\begin{proof}
The proof follows similar lines to that of Proposition \ref{prpde}. Thus, it is omitted.
For an alternate proof, we refer the reader to Appendix A2.   

\end{proof}
\begin{theorem}
The pmf of multivariate IGCP is given by
\begin{equation*}
\hat{p}_{\bar{\mathcal{M}}}(\bar{n},t)=\sum_{\substack{\Omega(k_i,n_i)\\i=1,2,\dots,q}}\Big(\prod_{i=1}^{q}\prod_{j_i=1}^{k_i}\frac{(\lambda_{ij_i})^{n_{ij_i}}}{n_{ij_i}!}\Big)\sum_{m=0}^{\infty}m^{\sum_{i=1}^{q}\sum_{j_i=1}^{k_i}n_{ij_i}}\sum_{\Omega(k_0,m)}\prod_{j_0=1}^{k_0}\frac{(\mu_{j_0}e^{-j_0\Lambda}t)^{x_{j_0}}}{x_{j_0}!}e^{-\mu_{j_0}t},\, \bar{n}\ge\bar{0},
\end{equation*}
where $\Omega(k_i,n_i)=\{(n_{i1},n_{i2},\dots,n_{ik_i}):\sum_{j_i=1}^{k_i}j_in_{ij_i}=n_i,\,n_{ij_i}\in\mathbb{N}_0\}$.
\end{theorem}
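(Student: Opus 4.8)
The plan is to compute $\hat{p}_{\bar{\mathcal{M}}}(\bar{n},t)$ directly by conditioning on the value of the subordinating process $\{M_0(t)\}_{t\ge0}$, exactly as in the proof of Theorem \ref{thmpmf}. First I would use the independence of $\{M_0(t)\}_{t\ge0}$ from the component processes to write
\begin{equation*}
\hat{p}_{\bar{\mathcal{M}}}(\bar{n},t)=\sum_{m=0}^{\infty}\mathrm{Pr}\{(M_1(m),M_2(m),\dots,M_q(m))=\bar{n}\}\,p_0(m,t),
\end{equation*}
where $p_0(m,t)$ is the pmf of $\{M_0(t)\}_{t\ge0}$. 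Since $\{M_1(t)\}_{t\ge0},\dots,\{M_q(t)\}_{t\ge0}$ are independent, the joint probability factorizes as $\prod_{i=1}^{q}\mathrm{Pr}\{M_i(m)=n_i\}$, and each marginal probability is supplied by \eqref{p(n,t)}.

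Next I would expand each factor via \eqref{p(n,t)}, so that the product over $i$ of sums collapses to a single sum over the tuples in $\Omega(k_i,n_i)$ for $i=1,2,\dots,q$:
\begin{equation*}
\mathrm{Pr}\{(M_1(m),\dots,M_q(m))=\bar{n}\}=\sum_{\substack{\Omega(k_i,n_i)\\i=1,2,\dots,q}}\prod_{i=1}^{q}\prod_{j_i=1}^{k_i}\frac{(\lambda_{ij_i}m)^{n_{ij_i}}}{n_{ij_i}!}e^{-\lambda_{ij_i}m}.
\end{equation*}
The key bookkeeping step is to isolate the $m$-dependence. Using $\sum_{i=1}^{q}\sum_{j_i=1}^{k_i}\lambda_{ij_i}=\Lambda$, the product of exponentials collapses to $e^{-\Lambda m}$, while $\prod_{i=1}^{q}\prod_{j_i=1}^{k_i}(\lambda_{ij_i}m)^{n_{ij_i}}=m^{\sum_{i=1}^{q}\sum_{j_i=1}^{k_i}n_{ij_i}}\prod_{i=1}^{q}\prod_{j_i=1}^{k_i}\lambda_{ij_i}^{n_{ij_i}}$. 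This pulls out the factor $\prod_{i=1}^{q}\prod_{j_i=1}^{k_i}\lambda_{ij_i}^{n_{ij_i}}/n_{ij_i}!$, which is free of $m$ and matches the leading product in the claimed expression.

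Then I would absorb the surviving factor $e^{-\Lambda m}$ into the pmf of $\{M_0(t)\}_{t\ge0}$. Writing $p_0(m,t)=\sum_{\Omega(k_0,m)}\prod_{j_0=1}^{k_0}(\mu_{j_0}t)^{x_{j_0}}/x_{j_0}!\,e^{-\mu_{j_0}t}$ and exploiting the defining constraint $\sum_{j_0=1}^{k_0}j_0 x_{j_0}=m$ of $\Omega(k_0,m)$, I would rewrite $e^{-\Lambda m}=\prod_{j_0=1}^{k_0}(e^{-j_0\Lambda})^{x_{j_0}}$, which converts each $(\mu_{j_0}t)^{x_{j_0}}$ into $(\mu_{j_0}e^{-j_0\Lambda}t)^{x_{j_0}}$. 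Finally, interchanging the (manifestly non-negative) outer sum over $\Omega(k_i,n_i)$ with the sum over $m$ by Tonelli's theorem yields the stated formula. The only point requiring any care is this interchange of summations together with the constraint-based rewriting of $e^{-\Lambda m}$; everything else is a routine factorization, so I do not anticipate any substantive obstacle.
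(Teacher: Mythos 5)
Your proposal is correct and follows essentially the same route as the paper: condition on $M_0(t)=m$, insert the pmf of the multivariate GCP evaluated at time $m$, and absorb $e^{-\Lambda m}$ into the sum over $\Omega(k_0,m)$ via the constraint $\sum_{j_0=1}^{k_0}j_0x_{j_0}=m$. The only cosmetic difference is that the paper quotes the multivariate GCP pmf from Eq. (3.7) of Kataria and Dhillon (2024), whereas you rederive it from the independence of the component GCPs and \eqref{p(n,t)}, which amounts to the same formula.
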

\begin{proof}
Let $p(\bar{n},t)$ be the pmf of multivariate GCP. By using Eq. (3.7) of Kataria and Dhillon (2024), we get
\begin{align*}
\hat{p}_{\bar{\mathcal{M}}}(\bar{n},t)&=\sum_{m=0}^{\infty}p(\bar{n},m)\mathrm{Pr}\{M_0(t)=m\}\\
&=\sum_{\substack{\Omega(k_i,n_i)\\i=1,2,\dots,q}}\Big(\prod_{i=1}^{q}\prod_{j_i=1}^{k_i}\frac{(\lambda_{ij_i})^{n_{ij_i}}}{n_{ij_i}!}\Big)\sum_{m=0}^{\infty}m^{\sum_{i=1}^{q}\sum_{j_i=1}^{k_i}n_{ij_i}}e^{-m\Lambda}\mathrm{Pr}\{M_0(t)=m\}\\
&=\sum_{\substack{\Omega(k_i,n_i)\\i=1,2,\dots,q}}\sum_{m=0}^{\infty}\Big(\prod_{i=1}^{q}\prod_{j_i=1}^{k_i}\frac{(m\lambda_{ij_i})^{n_{ij_i}}}{n_{ij_i}!}\Big)\sum_{\Omega(k_0,m)}\prod_{j_0=1}^{k_0}\frac{(\mu_{j_0}e^{-j_0\Lambda}t)^{x_{j_0}}}{x_{j_0}!}e^{-\mu_{j_0}t},
\end{align*}
where the last step follows by using \eqref{p(n,t)}. This completes the proof.
\end{proof}
\begin{corollary}
The pmf of multivariate IGCP has following equivalent form:
\begin{equation*}
\hat{p}_{\bar{\mathcal{M}}}(\bar{n},t)=\sum_{\substack{\Omega(k_i,n_i)\\i=1,2,\dots,q}}\Big(\prod_{i=1}^{q}\prod_{j_i=1}^{k_i}\frac{(\lambda_{ij_i})^{n_{ij_i}}}{n_{ij_i}!}\Big)\sum_{\sum_{j_{0}=1}^{k_0}r_{j_{0}}=z_{k}}z_{k}!\Big(\prod_{j_0=1}^{k_0}\frac{j_0^{r_{j_{0}}}}{r_{j_{0}}!}\Big)\prod_{j_0=1}^{k_0}\mathcal{B}_{r_{j_0}}(\mu_{j_0}te^{-j_0\Lambda})e^{\mu_{j_0}te^{-j_0\Lambda}},
\end{equation*}
where $\mathcal{B}_n(x)$ is the $n$th order Bell polynomial.
\end{corollary}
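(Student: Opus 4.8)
The plan is to reduce the double sum in the statement of the preceding theorem to the Bell-polynomial form exactly as in the proof of Theorem \ref{thmpmf}, with $\lambda$ replaced by $\Lambda$ and the single jump index $j$ replaced by the double index $(i,j_i)$. Write $z_k=\sum_{i=1}^{q}\sum_{j_i=1}^{k_i}n_{ij_i}$ for the total jump count appearing as the exponent of $m$. The first step is to observe that the pair of sums $\sum_{m=0}^{\infty}\sum_{\Omega(k_0,m)}$ is simply an unconstrained sum over all tuples $(x_1,x_2,\dots,x_{k_0})\in\mathbb{N}_0^{k_0}$ under the substitution $m=\sum_{j_0=1}^{k_0}j_0x_{j_0}$, so that the factor $m^{z_k}$ becomes $\big(\sum_{j_0=1}^{k_0}j_0x_{j_0}\big)^{z_k}$.

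The second step is to expand this power by the multinomial theorem,
\begin{equation*}
\Big(\sum_{j_0=1}^{k_0}j_0x_{j_0}\Big)^{z_k}=\sum_{\sum_{j_0=1}^{k_0}r_{j_0}=z_k}\frac{z_k!}{\prod_{j_0=1}^{k_0}r_{j_0}!}\prod_{j_0=1}^{k_0}(j_0x_{j_0})^{r_{j_0}},
\end{equation*}
and then to interchange the finite multinomial sum over the $r_{j_0}$ with the sum over the $x_{j_0}$. Factoring the constants $j_0^{r_{j_0}}$ out of the $x_{j_0}$-sum leaves, for each $j_0$, the one-dimensional sum $\sum_{x_{j_0}\ge0}x_{j_0}^{r_{j_0}}(\mu_{j_0}e^{-j_0\Lambda}t)^{x_{j_0}}/x_{j_0}!$.

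In the final step I would apply the definition \eqref{bell} of the Bell polynomial with argument $\mu_{j_0}e^{-j_0\Lambda}t$, which gives $\sum_{x_{j_0}\ge0}x_{j_0}^{r_{j_0}}(\mu_{j_0}e^{-j_0\Lambda}t)^{x_{j_0}}/x_{j_0}!=e^{\mu_{j_0}te^{-j_0\Lambda}}\mathcal{B}_{r_{j_0}}(\mu_{j_0}te^{-j_0\Lambda})$. Multiplying by the prefactor $\prod_{j_0}e^{-\mu_{j_0}t}$ carried along from the theorem combines the two exponentials into $e^{-\mu_{j_0}t(1-e^{-j_0\Lambda})}$ for each $j_0$, exactly as in Theorem \ref{thmpmf}, and reassembling the products over $j_0$ together with the outer sum over $\Omega(k_i,n_i)$ delivers the asserted form. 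There is no substantial obstacle here: the computation parallels Theorem \ref{thmpmf} verbatim, and the only point needing care is the interchange of the infinite $x_{j_0}$-sums with the finite multinomial sum, which is justified by Tonelli applied termwise since every summand is non-negative and the resulting exponential series converge absolutely.
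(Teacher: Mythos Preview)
Your proposal is correct and is exactly the intended derivation: the paper states this corollary without proof, but the manipulation you describe---collapsing $\sum_{m\ge0}\sum_{\Omega(k_0,m)}$ to an unconstrained sum over $(x_1,\dots,x_{k_0})$, expanding $(\sum_{j_0}j_0x_{j_0})^{z_k}$ multinomially, and invoking \eqref{bell}---is verbatim the computation carried out in the proof of Theorem~\ref{thmpmf} with $\Lambda$ in place of $\lambda$. One minor remark: your final combined exponential $e^{-\mu_{j_0}t(1-e^{-j_0\Lambda})}$ is in fact the correct factor (matching Theorem~\ref{thmpmf}); the corollary as printed appears to have dropped the $e^{-\mu_{j_0}t}$ prefactor, so your derivation actually yields the properly normalized form.
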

\begin{proposition}
The L\'evy measure of multivariate IGCP is given by
\begin{equation*}
\hat{\Pi}_{\bar{\mathcal{M}}}(A_1\times A_2\times \dots\times A_q)=\sum_{j_0=1}^{k_0}\mu_{j_0}\sum_{\bar{n}\succ\bar{0}}\sum_{\substack{\Omega(k_i,n_i)\\i=1,2,\dots,q}}\prod_{i=1}^{q}\Big(\prod_{j_i=1}^{k_i}\frac{(j_0\lambda_{ij_i})^{n_{ij_i}}}{n_{ij_i}!}e^{-j_0\lambda_{ij_i}}\Big)\mathbb{I}_{\{n_i\in A_i\}},
\end{equation*}
where $\Omega(k_i,n_i)=\{(n_{i1},n_{i2},\dots,n_{ik_i}):\sum_{j_i=1}^{k_i}j_in_{ij_i}=n_i,\,n_{ij_i}\in\mathbb{N}_0\}$.
\end{proposition}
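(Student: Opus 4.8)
\noindent The plan is to reproduce the L\'evy-measure computation already carried out for the univariate IGCP, since $\{\bar{\mathcal{M}}(t)\}_{t\ge0}$ is the $\mathbb{R}^q$-valued multivariate GCP $\{\bar{M}(t)\}_{t\ge0}$ subordinated by the independent GCP $\{M_0(t)\}_{t\ge0}$. First I would record the two ingredients: the multivariate GCP is a L\'evy process on $\mathbb{R}^q$ with transition law $p(\bar{n},s)=\mathrm{Pr}\{\bar{M}(s)=\bar{n}\}$, while $\{M_0(t)\}_{t\ge0}$ is a driftless subordinator whose L\'evy measure is $\Pi_{M_0}(\mathrm{d}s)=\sum_{j_0=1}^{k_0}\mu_{j_0}\delta_{j_0}(\mathrm{d}s)$.

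Next I would invoke Theorem 30.1 of Sato (1999) for the subordinated process. As $M_0$ has zero drift, the L\'evy measure of $\bar{\mathcal{M}}$ on a product Borel set $A_1\times A_2\times\dots\times A_q$ not containing $\bar{0}$ is
\[
\hat{\Pi}_{\bar{\mathcal{M}}}(A_1\times\dots\times A_q)=\int_{s=0}^{\infty}\sum_{\bar{n}\succ\bar{0}}p(\bar{n},s)\,\mathbb{I}_{\{n_i\in A_i,\,i=1,\dots,q\}}\,\Pi_{M_0}(\mathrm{d}s),
\]
the sum running over $\bar{n}\succ\bar{0}$ because the L\'evy measure does not charge the origin.

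Then I would substitute $\Pi_{M_0}(\mathrm{d}s)=\sum_{j_0=1}^{k_0}\mu_{j_0}\delta_{j_0}(\mathrm{d}s)$, whose Dirac masses collapse the $s$-integral into a finite sum and evaluate the multivariate GCP at the integer times $s=j_0$:
\[
\hat{\Pi}_{\bar{\mathcal{M}}}(A_1\times\dots\times A_q)=\sum_{j_0=1}^{k_0}\mu_{j_0}\sum_{\bar{n}\succ\bar{0}}p(\bar{n},j_0)\,\mathbb{I}_{\{n_i\in A_i\}}.
\]
Finally, using the pmf of the multivariate GCP from Eq. (3.7) of Kataria and Dhillon (2024), which factorizes across $i$ since the $M_i$ are independent, namely $p(\bar{n},j_0)=\sum_{\Omega(k_i,n_i),\,i=1,\dots,q}\prod_{i=1}^{q}\prod_{j_i=1}^{k_i}\frac{(j_0\lambda_{ij_i})^{n_{ij_i}}}{n_{ij_i}!}e^{-j_0\lambda_{ij_i}}$, I would obtain the asserted expression.

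The computation is a direct multivariate transcription of the univariate L\'evy-measure argument, so no genuine obstacle arises; the only care needed is the combinatorial bookkeeping, namely checking that the product structure of $p(\bar{n},j_0)$ survives evaluation at $s=j_0$ and that the restriction to $A_1\times\dots\times A_q$ lets the indicator factor as $\prod_{i=1}^{q}\mathbb{I}_{\{n_i\in A_i\}}$ across the components.
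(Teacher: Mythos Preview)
Your proposal is correct and follows essentially the same argument as the paper: both invoke Sato's subordination formula (the paper cites Eq.~(30.8), you cite Theorem~30.1) with $\Pi_{M_0}(\mathrm{d}s)=\sum_{j_0=1}^{k_0}\mu_{j_0}\delta_{j_0}(\mathrm{d}s)$, then substitute the multivariate GCP pmf and integrate against the Dirac masses. The only cosmetic difference is the order in which you collapse the integral and insert the pmf, which is immaterial.
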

\begin{proof}
Let $\Pi_{M_0}(\cdot)$ be the L\'evy measure of $\{M_0(t)\}_{t\ge0}$ and $p(\bar{n},t)$ be the pmf of MGCP. By using
Eq. (30.8) of Sato (1999), the L\'evy measure of multivariate IGCP can be obtained as follows:
{\small\begin{align*}
		\hat{\Pi}_{\bar{\mathcal{M}}}(A_1\times A_2\times\dots\times A_q)&=\int_{0}^{\infty}\sum_{\bar{n}\succ\bar{0}}p(\bar{n},s)\Big(\prod_{i=1}^{q}\mathbb{I}_{\{n_i\in A_i\}}\Big)\Pi_{M_0}(\mathrm{d}s)\\
		&=\int_{0}^{\infty}\sum_{\bar{n}\succ\bar{0}}\sum_{\substack{\Omega(k_i,n_i)\\i=1,2,\dots,q}}\Big(\prod_{i=1}^{q}\mathbb{I}_{\{n_i\in A_i\}}\prod_{j_i=1}^{k_i}\frac{(\lambda_{ij_i}s)^{n_{ij_i}}}{n_{ij_i!}}e^{-\lambda_{ij_i}s}\Big)\sum_{j_0=1}^{k_0}\mu_{j_0}\delta_{j_0}(\mathrm{d}s).
		\end{align*}}	
Finally, the result follows on integrating with respect to $\delta_{j_0}(\cdot)$, that is, $\int f(x)\delta_{j_0}(\mathrm{d}x)=f(j_0)$.
\end{proof}
\begin{proposition}
For $1\le i\le q$ and $1\le l\le q$, the covariance of $\{M_i(M_0(t))\}_{t\ge0}$ and $\{M_l(M_0(t))\}_{t\ge0}$ is given by
\begin{equation*}
\operatorname{Cov}(M_i(M_0(t)),M_l(M_0(t)))=\mathbb{I}_{\{i=l\}}\sum_{j_0=1}^{k_0}j_0\mu_{j_0}t\sum_{j_i=1}^{k_i}j_i^2\lambda_{ij_i}+\sum_{j_i=1}^{k_i}j_i\lambda_{ij_i}\sum_{j_l=1}^{k_l}j_l\lambda_{lj_l}\sum_{j_0=1}^{k_0}j_0^2\mu_{j_0}t.
\end{equation*}
\end{proposition}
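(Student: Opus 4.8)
The plan is to condition on the value of the subordinator $M_0(t)$ and invoke the law of total covariance. Writing $X=M_i(M_0(t))$ and $Y=M_l(M_0(t))$, I would start from
\[
\operatorname{Cov}(X,Y)=\mathbb{E}\big(\operatorname{Cov}(X,Y\mid M_0(t))\big)+\operatorname{Cov}\big(\mathbb{E}(X\mid M_0(t)),\,\mathbb{E}(Y\mid M_0(t))\big).
\]
Because $\{M_i(t)\}_{t\ge0}$ and $\{M_l(t)\}_{t\ge0}$ are independent GCPs when $i\ne l$ and both are independent of $\{M_0(t)\}_{t\ge0}$, conditioning on the event $M_0(t)=m$ simply freezes the inner clock at the deterministic value $m$. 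Hence the required conditional moments of $M_i(m)$ and $M_l(m)$ are just the ordinary GCP moments from \eqref{covgcp} evaluated at time $m$, namely $\mathbb{E}(M_i(m))=m\sum_{j_i=1}^{k_i}j_i\lambda_{ij_i}$ and $\operatorname{Var}(M_i(m))=m\sum_{j_i=1}^{k_i}j_i^2\lambda_{ij_i}$.

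For the expected conditional covariance term, independence of $M_i$ and $M_l$ when $i\ne l$ makes the conditional covariance vanish, while for $i=l$ it equals the conditional variance $m\sum_{j_i=1}^{k_i}j_i^2\lambda_{ij_i}$. Taking the expectation over $M_0(t)$ and using $\mathbb{E}(M_0(t))=\sum_{j_0=1}^{k_0}j_0\mu_{j_0}t$ from \eqref{covgcp} yields exactly the indicator term $\mathbb{I}_{\{i=l\}}\sum_{j_0=1}^{k_0}j_0\mu_{j_0}t\sum_{j_i=1}^{k_i}j_i^2\lambda_{ij_i}$. For the covariance of the conditional means, the conditional expectations are the constants $\sum_{j_i=1}^{k_i}j_i\lambda_{ij_i}$ and $\sum_{j_l=1}^{k_l}j_l\lambda_{lj_l}$ multiplied by the \emph{same} random variable $M_0(t)$; pulling the constants out and invoking $\operatorname{Var}(M_0(t))=\sum_{j_0=1}^{k_0}j_0^2\mu_{j_0}t$ from \eqref{covgcp} produces the second summand. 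Adding the two contributions gives the asserted formula.

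There is no genuine analytic obstacle here, as the argument is a direct application of the conditional variance–covariance decomposition (equivalently, Theorem 2.1 of Leonenko \textit{et al.} (2014), already used above for the mean and variance of subordinated processes). The only point requiring care is conceptual bookkeeping: one must keep straight that the expected conditional covariance term is the piece that carries the indicator $\mathbb{I}_{\{i=l\}}$, whereas the covariance of the conditional means survives \emph{even when} $i\ne l$, because the two coordinates $M_i(M_0(t))$ and $M_l(M_0(t))$ are driven by one common time change $M_0(t)$. Thus the cross-correlation between distinct coordinates of the multivariate IGCP is induced purely by the shared subordinator, and isolating this effect correctly inside the decomposition is the main thing to verify.
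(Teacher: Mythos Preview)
Your proposal is correct and follows essentially the same approach as the paper: both condition on the common subordinator $M_0(t)$ and feed in the GCP moments from \eqref{covgcp}. The only cosmetic difference is that you package the argument uniformly via the law of total covariance, whereas the paper treats the cases $i=l$ and $i\ne l$ separately, invoking \eqref{IGCPVAR} for the former and computing $\mathbb{E}(M_i(M_0(t))M_l(M_0(t)))$ directly for the latter before subtracting the product of means.
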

\begin{proof}
For $i=l$, we have
\begin{align}
\operatorname{Cov}(M_i(M_0(t)),M_l(M_0(t)))&=\operatorname{Var}(M_i(M_0(t)))\nonumber\\
&=\sum_{j_i=1}^{k_i}j_i^2\lambda_{ij_i}\sum_{j_0=1}^{k_0}j_0\mu_{j_0}t+\Big(\sum_{j_i=1}^{k_i}j_i\lambda_{ij_i}\Big)^2\sum_{j_0=1}^{k_0}j_0^2\mu_{j_0}t\label{COV1}
\end{align}
which follows by using \eqref{IGCPVAR}.
For $i\ne l$, we have
\begin{align*}
\mathbb{E}(M_i(M_0(t))M_l(M_0(t)))&=\mathbb{E}(\mathbb{E}(M_i(M_0(t))M_l(M_0(t))|M_0(t)))\\
&=\mathbb{E}\Big(\sum_{j_i=1}^{k_i}j_i\lambda_{ij_i}\sum_{j_l=1}^{k_l}j_l\lambda_{lj_l}(M_0(t))^2\Big)\\
&=\sum_{j_i=1}^{k_i}j_i\lambda_{ij_i}\sum_{j_l=1}^{k_l}j_l\lambda_{lj_l}\Big(\sum_{j_0=1}^{k_0}j_0^2\mu_{j_0}t+\Big(\sum_{j_0=1}^{k_0}j_0\mu_{j_0}t\Big)^2\Big)
\end{align*}
 which follows from \eqref{covgcp}.
 So, by using \eqref{IGCPMEAN}, we have 
\begin{equation}\label{COV2}
 \operatorname{Cov}(M_i(M_0(t)),M_l(M_0(t)))=\sum_{j_i=1}^{k_i}j_i\lambda_{ij_i}\sum_{j_l=1}^{k_l}j_l\lambda_{lj_l}\sum_{j_0=1}^{k_0}j_0^2\mu_{j_0}t.
\end{equation}
Finally, the proof follows by using \eqref{COV1} and \eqref{COV2}.
\end{proof}

\begin{proposition}
	For any $1\le i\le q$ and $1\le l\le q$, the codifference of $\{M_i(M_0(t))\}_{t\ge0}$ and $\{M_l(M_0(t))\}_{t\ge0}$ is given by
	{\small\begin{align*}
			\tau(M_i(M_0(t)),M_l(M_0(t)))&=
			\sum_{j_0=1}^{k_0}\mu_{j_0}t\Big(2-\exp\Big(\sum_{j_i=1}^{k_i}j_0\lambda_{ij_i}(e^{\omega j_i}-1)\Big)+\exp\Big(\sum_{j_l=1}^{k_l}j_0\lambda_{lj_l}(e^{-\omega j_l}-1)\Big)\Big)\\
			&\ \ -\sum_{j_0=1}^{k_0}\mu_{j_0}t\Big(1-\exp\Big(\sum_{j_i=1}^{k_i}j_0\lambda_{ij_i}(e^{\omega j_i}-1)+\sum_{j_l=1}^{k_l}j_0\lambda_{lj_l}(e^{-\omega j_l}-1)\Big)\Big)\mathbb{I}_{\{i\ne l\}}.
	\end{align*}}
\end{proposition}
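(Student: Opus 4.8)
The plan is to evaluate the codifference directly from its definition
\[
\tau(X,Y)=\ln\mathbb{E}(e^{\omega(X-Y)})-\ln\mathbb{E}(e^{\omega X})-\ln\mathbb{E}(e^{-\omega Y}),
\]
applied to $X=M_i(M_0(t))$ and $Y=M_l(M_0(t))$. Everything then reduces to reading off three specializations of the joint moment generating function of the multivariate IGCP, which I obtain from the pgf $\hat{G}_{\bar{\mathcal{M}}}(\bar{u},t)$ computed above through the substitution $u_i=e^{\omega_i}$ (equivalently, from \eqref{mgfmt} after conditioning on $M_0(t)$):
\[
\mathbb{E}\Big(e^{\sum_{i=1}^{q}\omega_i M_i(M_0(t))}\Big)=\exp\Big(-\sum_{j_0=1}^{k_0}\mu_{j_0}t\Big(1-\exp\Big(-j_0\sum_{i=1}^{q}\sum_{j_i=1}^{k_i}\lambda_{ij_i}(1-e^{\omega_i j_i})\Big)\Big)\Big).
\]
The one genuinely structural point is that the common subordinator $\{M_0(t)\}_{t\ge0}$ couples the coordinates: although $\{M_i(t)\}_{t\ge0}$ and $\{M_l(t)\}_{t\ge0}$ are independent GCPs, the time-changed pair is dependent when $i\ne l$, so the bivariate log-MGF is \emph{not} the sum of the two marginal ones and must be taken from the displayed formula with two nonzero arguments.

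First I would record the marginals. Setting $\omega_i=\omega$ with all other arguments zero, and rewriting $-j_0\lambda_{ij_i}(1-e^{\omega j_i})=j_0\lambda_{ij_i}(e^{\omega j_i}-1)$, gives
\[
\ln\mathbb{E}(e^{\omega M_i(M_0(t))})=-\sum_{j_0=1}^{k_0}\mu_{j_0}t\Big(1-\exp\Big(\sum_{j_i=1}^{k_i}j_0\lambda_{ij_i}(e^{\omega j_i}-1)\Big)\Big),
\]
and attaching the argument $-\omega$ to the $l$-th coordinate yields the analogous expression with $\sum_{j_l=1}^{k_l}j_0\lambda_{lj_l}(e^{-\omega j_l}-1)$ in the inner exponential. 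Next, putting $\omega_i=\omega$ and $\omega_l=-\omega$ (for $i\ne l$) makes the two coordinate contributions enter additively inside the nested exponential, so
\[
\ln\mathbb{E}(e^{\omega M_i(M_0(t))-\omega M_l(M_0(t))})=-\sum_{j_0=1}^{k_0}\mu_{j_0}t\Big(1-\exp\Big(\sum_{j_i=1}^{k_i}j_0\lambda_{ij_i}(e^{\omega j_i}-1)+\sum_{j_l=1}^{k_l}j_0\lambda_{lj_l}(e^{-\omega j_l}-1)\Big)\Big),
\]
which is precisely the combined exponent appearing in the second line of the statement; when $i=l$ the two arguments cancel and this term vanishes identically.

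Finally I would substitute these three expressions into the codifference definition and separate the two regimes. For $i=l$ the joint term is zero, leaving only the two marginal contributions, which collect into the first summand of the statement; for $i\ne l$ the genuinely bivariate joint term survives and accounts for the summand carrying the factor $\mathbb{I}_{\{i\ne l\}}$, while the marginal contributions again furnish the first summand. Pulling out the common factor $\sum_{j_0=1}^{k_0}\mu_{j_0}t$ then produces the displayed formula. I expect the main difficulty to be not conceptual but bookkeeping: one must track signs consistently through the rewriting $1-e^{\omega j}\mapsto e^{\omega j}-1$ and through the additive merging of the two coordinate exponents inside the outer exponential, and must respect the coupling induced by the shared subordinator, which is exactly what prevents the $i\ne l$ cross term from factoring into a product of marginals.
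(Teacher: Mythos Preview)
Your proposal is correct and follows essentially the same route as the paper: compute $\ln\mathbb{E}(e^{\omega(M_i(M_0(t))-M_l(M_0(t)))})$ and the two marginal log-MGFs (the paper does the joint term by conditioning on $M_0(t)$ and using \eqref{mgfmt}, which is equivalent to your specialization of the multivariate mgf), then substitute into the codifference definition cited as Eq.~(1.7) of Kokoszka and Taqqu (1996). Your explicit remark about the coupling through the common subordinator is a useful clarification that the paper leaves implicit.
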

\begin{proof}
	For $i\ne l$, we have
{\footnotesize	\begin{align*}
		\mathbb{E}(e^{\omega(M_i(M_0(t))-M_l(M_0(t)))})&=\mathbb{E}\Big(\mathbb{E}\Big(e^{\omega(M_i(M_0(t))-M_l(M_0(t)))}|M_0(t)\Big)\Big)\\
		&=\mathbb{E}\bigg(\exp\bigg(\bigg(\sum_{j_i=1}^{k_i}\lambda_{ij_i}(e^{\omega j_i}-1)+\sum_{j_l=1}^{k_l}\lambda_{lj_l}(e^{-\omega j_l}-1)\bigg)M_0(t)\bigg)\bigg)\\
		&=\exp\Big(-\sum_{j_0=1}^{k_0}\mu_{j_0}t\Big(1-\exp\Big(\sum_{j_i=1}^{k_i}j_0\lambda_{ij_i}(e^{\omega j_i}-1)+\sum_{j_l=1}^{k_l}j_0\lambda_{lj_l}(e^{-\omega j_l}-1)\Big)\Big)\Big), 
	\end{align*}}
	which follows by using the \eqref{pgfmt}.
	For $i=l$, we get $\mathbb{E}(e^{\omega(M_i(M_0(t))-M_l(M_0(t)))})=1$. Finally, the result follows by using Eq. (1.7) of Kokoszka and Taqqu (1996). 
\end{proof}
\subsection{$q$-iterated GCP}
Let us consider $q$ independent GCPs $\{M_1(t)\}_{t\ge0}$, $\{M_2(t)\}_{t\ge0}$, $\dots$, $\{M_q(t)\}_{t\ge0}$ such that  $\{M_i(t)\}_{t\ge0}$ performs jumps of size $j_i$ with positive rate $\lambda_{j_i}$, $j_i=1,2,\dots,k_i$. The $q$-iterated GCP $\{\hat{M}^q(t)\}_{t\ge0}$ is defined as follows:
\begin{equation}\label{qigcprep}
\hat{M}^q(t)\coloneqq M(M_1(M_2(\dots(M_q(t))\dots))),\, t\ge0,
\end{equation}
where $\{M(t)\}_{t\ge0}$ is a GCP with positive rates $\lambda_j$, $j=1,2,\dots,k$ and it is independent of $\{M_i(t)\}_{t\ge0}$, $i=1,2,\dots,q$.
\begin{remark}
For $q=1$, the $q$-iterated GCP reduces to the IGCP. For $k=k_1=k_2=\dots=k_q=1$, the process defined in \eqref{qigcprep} reduces to the $q$-iterated Poisson process.
\end{remark}
Let $	\hat{p}^q(n,t)=\mathrm{Pr}\{\hat{M}^q(t)=n\}$, $n\ge0$ denote the state probabilities of $q$-iterated GCP. For $q=1$, we have
\begin{align*}
	\hat{p}^q(n,t)&=\mathrm{Pr}\{M(M_1(t))=n\}\\
	&=\sum_{s_1=0}^{\infty}\mathrm{Pr}\{M_1(t)=s_1\}\mathrm{Pr}\{M(s_1)=n\}\\
	&=\sum_{\Omega(k,n)}\Big(\prod_{j=1}^{k}\frac{\lambda_{j}^{n_{j}}}{n_{j}!}\Big)e^{-\mu t}\sum_{\sum_{j_1=1}^{k_1}r_{j_1}=z_{k}}z_{k}!\prod_{j_1=1}^{k_1}\frac{j_1^{r_{j_1}}}{r_{j_1}!}e^{\mu_{j_1}te^{-j_1\lambda}}\mathcal{B}_{r_{j_1}}(e^{-j_1\lambda}\mu_{j_1}t)
\end{align*}
which follows from \eqref{IGpmf}.

Now, for $q=2$, we have
\begin{align*}
	\hat{p}^q(n,t)&=\sum_{s_2=0}^{\infty}\mathrm{Pr}\{M_2(t)=s_2\}\mathrm{Pr}\{M(M_1(s_2))=n\}\\
	&=\sum_{s_2=0}^{\infty}\mathrm{Pr}\{M_2(t)=s_2\}\\
	&\hspace{1.7cm} \cdot\sum_{\Omega(k,n)}\Big(\prod_{j=1}^{k}\frac{\lambda_{j}^{n_{j}}}{n_{j}!}\Big)e^{-\mu s_2}\sum_{\sum_{j_1=1}^{k_1}r_{j_1}=z_{k}}z_{k}!\prod_{j_1=1}^{k_1}\frac{j_1^{r_{j_1}}}{r_{j_1}!}e^{\mu_{j_1}te^{-j_1\lambda}}\mathcal{B}_{r_{j_1}}(e^{-j_1\lambda}\mu_{j_1}s_2).
\end{align*}

Also, for $q=3$, we have
\begin{align*}
	\hat{p}^q(n,t)&=\sum_{s_3=0}^{\infty}\mathrm{Pr}\{M_3(t)=s_3\}\sum_{s_2=0}^{\infty}\mathrm{Pr}\{M_2(s_3)=s_2\}\mathrm{Pr}\{M(M_1(s_2))=n\}\\
	&=\sum_{s_2=0}^{\infty}\sum_{s_3=0}^{\infty}\mathrm{Pr}\{M_3(t)=s_3\}\mathrm{Pr}\{M_2(s_3)=s_2\}\mathrm{Pr}\{M(M_1(s_2))=n\}\\
	&=\sum_{s_2=0}^{\infty}\sum_{\Omega(k_2,n_2)}\Big(\prod_{j_2=1}^{k_2}\frac{\lambda_{j_2}^{n_{j_2}}}{n_{j_2}!}\Big)e^{-\lambda_3 t}
	\cdot\sum_{\sum_{j_3=1}^{k_3}r_{j_3}=z_{k_{2}}}z_{k_{2}}!\prod_{j_3=1}^{k_3}\frac{j_3^{r_{j_3}}}{r_{j_3}!}e^{\lambda_{j_3} te^{-j_3\lambda_2}}\mathcal{B}_{r_{j_3}}(e^{-j_3\lambda_2}\lambda_{j_3}t)\\
	&\hspace{1.8cm} \cdot\sum_{\Omega(k,n)}\Big(\prod_{j=1}^{k}\frac{\lambda_{j}^{n_{j}}}{n_{j}!}\Big)e^{-\mu s_2}\cdot\sum_{\sum_{j_1=1}^{k_1}r_{j_1}=z_{k}}z_{k}!\prod_{j_1=1}^{k_1}\frac{j_1^{r_{j_1}}}{r_{j_1}!}e^{\mu_{j_1}te^{-j_1\lambda}}\mathcal{B}_{r_{j_1}}(e^{-j_1\lambda}\mu_{j_1}s_2).
\end{align*}
Proceeding inductively, we can obtain the following result:
\begin{proposition}
The state of $q$-iterated GCP are given as follows:
 For $q$ even, we have
{\footnotesize\begin{align*}
	\hat{p}^q(n,t)&=\sum_{s_q=0}^{\infty}\mathrm{Pr}\{M_q(t)=s_q\}\Big(\prod_{i=1}^{\frac{q}{2}-1}\sum_{\Omega(k_{2i},s_{2i})}\Big(\prod_{j_{2i}=1}^{k_{2i}}\frac{\lambda_{j_{2i}}^{n_{j_{2i}}}}{n_{j_{2i}!}}\Big)\exp(-\lambda_{2i+1}s_{2(i+1)})\sum_{\sum_{j_{2i+1}=1}^{k_{2i+1}}r_{j_{2i+1}}=z_{k_{2i}}}z_{k_{2i}}!\\
	&\ \ \cdot\prod_{j_{2i+1}=1}^{k_{2i+1}}\frac{j_{2i+1}^{r_{j_{2i+1}}}}{r_{j_{2i+1}}!}\exp(\lambda_{j_{2i+1}}s_{2(i+1)}\exp(-j_{2i+1})\lambda_{2i})\mathcal{B}_{r_{j_{2i+1}}}(e^{-j_{2i+1}\lambda_{2i}}\lambda_{j_{2i+1}}s_{2(i+1)})\Big)\\
	&\ \ \cdot\sum_{\Omega(k,n)}\Big(\prod_{j=1}^{k}\frac{\lambda_j^{n_j}}{n_j!}\Big)e^{-s_2\lambda_1}\sum_{\sum_{j_1=1}^{k_1}r_{j_1}=z_k}z_k!\Big(\prod_{j_1=1}^{k_1}\frac{j_1^{r_{j_1}}}{r_{j_1}!}\Big)\exp(e^{-j_1\lambda}\lambda_{j_1}s_2)\mathcal{B}_{r_{j_1}}(e^{-j_1\lambda}\lambda_{j_1}s_2)
\end{align*}}
 and for $q$ odd, we have
{\footnotesize\begin{align*}
\hat{p}^q(n,t)&=\sum_{s_{q-1}=0}^{\infty}\sum_{\Omega(k_{q-1},s_{q-1})}\Big(\prod_{j_{q-1}=1}^{k_{q-1}}\frac{\lambda_{j_{q-1}}^{n_{j_{q-1}}}}{n_{j_{q-1}}!}\Big)e^{-t\lambda_q}\sum_{\sum_{j_q=1}^{k_q}r_{j_q}=z_{k_{q-1}}}z_{k_{q-1}}!\Big(\prod_{j_q=1}^{k_q}\frac{j_q^{r_{j_q}}}{r_{j_q}!}\exp(\lambda_{j_q}te^{-j_q\lambda_{q-1}})\\
&\ \ \cdot\mathcal{B}_{r_{j_q}}(\lambda_{j_q}te^{-j_q\lambda_{q-1}})\Big)\bigg(\prod_{i=1}^{\lfloor\frac{q}{2}\rfloor-1}\sum_{s_{2i}=0}^{\infty}\sum_{\Omega(k_{2i},s_{2i})}\Big(\prod_{j_{2i}=1}^{k_{2i}}\frac{\lambda_{j_{2i}}^{n_{j_{2i}}}}{n_{j_{2i}!}}\Big)\exp(-\lambda_{2i+1}s_{2(i+1)})\sum_{\sum_{j_{2i+1}=1}^{k_{2i+1}}r_{j_{2i+1}}=z_{k_{2i}}}z_{k_{2i}}!\\
&\ \ \cdot\prod_{j_{2i+1}=1}^{k_{2i+1}}\frac{j_{2i+1}^{r_{j_{2i+1}}}}{r_{j_{2i+1}}!}\exp(\lambda_{j_{2i+1}}s_{2(i+1)}\exp(-j_{2i+1})\lambda_{2i})\mathcal{B}_{r_{j_{2i+1}}}(e^{-j_{2i+1}\lambda_{2i}}\lambda_{j_{2i+1}}s_{2(i+1)})\bigg)\\
&\ \ \cdot\sum_{\Omega(k,n)}\Big(\prod_{j=1}^{k}\frac{\lambda_j^{n_j}}{n_j!}\Big)e^{-s_2\lambda_1}\sum_{\sum_{j_1=1}^{k_1}r_{j_1}=z_k}z_k!\Big(\prod_{j_1=1}^{k_1}\frac{j_1^{r_{j_1}}}{r_{j_1}!}\Big)\exp(e^{-j_1\lambda}\lambda_{j_1}s_2)\mathcal{B}_{r_{j_1}}(e^{-j_1\lambda}\lambda_{j_1}s_2).
\end{align*}}
\end{proposition}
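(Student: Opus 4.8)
The plan is to argue by induction on $q$, peeling the innermost subordinator off one layer at a time and repeatedly invoking the Bell-polynomial collapse already used in the proof of Theorem \ref{thmpmf}. Writing $\hat{M}^{q-1}(\tau)\coloneqq M(M_1(\dots M_{q-1}(\tau)\dots))$ for the $(q-1)$-iterated GCP built from $M,M_1,\dots,M_{q-1}$, the definition \eqref{qigcprep} gives $\hat{M}^q(t)=\hat{M}^{q-1}(M_q(t))$, and since $\{M_q(t)\}_{t\ge0}$ is independent of the remaining processes, conditioning on $M_q(t)$ yields the recursion
\[
\hat{p}^q(n,t)=\sum_{s=0}^{\infty}\mathrm{Pr}\{M_q(t)=s\}\,\hat{p}^{q-1}(n,s),\qquad n\ge0.
\]
The base cases $q=1$ and $q=2$ are exactly the two computations displayed above (for $q=1$ this is \eqref{IGpmf}, and for $q=2$ the product over $i$ in the even formula is empty). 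I would then show that this recursion carries the formula for $q-1$ to the asserted formula for $q$, treating the two parities separately, with $\lambda_a\coloneqq\sum_{j_a}\lambda_{ja}$ and $z_{k_a}$ the total jump count of the $a$-th layer.

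The mechanism is that the top time variable $s$ enters $\hat{p}^{q-1}(n,s)$ in one of two ways according to the parity of $q-1$. When $q$ is odd, so $q-1$ is even, the formula for $\hat{p}^{q-1}$ carries a leading factor $\mathrm{Pr}\{M_{q-1}(s)=s_{q-1}\}$, whose $s$-dependence is the clean monomial $s^{z_{k_{q-1}}}e^{-\lambda_{q-1}s}$; substituting and evaluating $\sum_{s}s^{z_{k_{q-1}}}e^{-\lambda_{q-1}s}\mathrm{Pr}\{M_q(t)=s\}$ is precisely the collapse performed in \eqref{igcppmfep}, which through the definition \eqref{bell} produces the head factor $\prod_{j_q}\frac{j_q^{r_{j_q}}}{r_{j_q}!}\exp(\lambda_{jq}te^{-j_q\lambda_{q-1}})\mathcal{B}_{r_{j_q}}(\lambda_{jq}te^{-j_q\lambda_{q-1}})$ of the odd-$q$ expression, while $s_{q-1}$ survives as the outer free summation index. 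When $q$ is even, so $q-1$ is odd, the variable $s$ enters $\hat{p}^{q-1}$ only inside Bell polynomials $\mathcal{B}_{r_{j_{q-1}}}(\,\cdot\,s)$, which do not recombine with $\mathrm{Pr}\{M_q(t)=s\}$; hence $s=s_q$ remains a free sum and one simply reads off the even-$q$ expression with $\sum_{s_q}\mathrm{Pr}\{M_q(t)=s_q\}$ placed in front. Equivalently, I could run a parity-preserving step $q-2\mapsto q$, conditioning on $M_{q-1}$ and $M_q$ at once, so that each step is one Bell collapse over $s_q$ plus one surviving sum over $s_{q-1}$, adding the single new factor indexed by $i=\lfloor q/2\rfloor-1$.

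Since the analytic content of each step is one application of the identity already established for Theorem \ref{thmpmf}, the hard part will be purely combinatorial: keeping the alternating pattern of collapsed versus surviving indices aligned with the stated formula, that is, verifying that the collapsed variables are exactly $s_1,s_3,s_5,\dots$ and the free ones exactly $s_2,s_4,\dots$, and that the resulting Bell-polynomial arguments, the constraints $\sum_{j_a}r_{ja}=z_{k_{a-1}}$, and the rate exponents $e^{-j\lambda_{a-1}}$ land on the correct layer. This bookkeeping, together with the separate treatment of the two parities and the empty products occurring at the base cases, is the main obstacle; once the indices are matched, the inductive step reproduces the displayed even- and odd-$q$ expressions verbatim, completing the proof.
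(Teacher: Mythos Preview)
Your proposal is correct and follows the same route as the paper: the paper computes the cases $q=1,2,3$ explicitly by conditioning on the innermost subordinator and applying the Bell-polynomial collapse of Theorem \ref{thmpmf}, then simply writes ``Proceeding inductively, we can obtain the following result'' before stating the proposition. Your recursion $\hat p^{q}(n,t)=\sum_{s}\mathrm{Pr}\{M_q(t)=s\}\,\hat p^{q-1}(n,s)$ and your parity analysis of when the Bell collapse fires (odd $q$) versus when the outer sum merely survives (even $q$) is exactly the induction the paper leaves implicit, just articulated more carefully.
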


\begin{proposition}
For $|u|\le1$, the pgf $\hat{G}^q(u,t)$ of $q$-iterated GCP is given by
{\small\begin{equation*}
\hat{G}^q(u,t)=\exp\Big(-\sum_{j_q=1}^{k_q}\lambda_{j_q}t\Big(1-\exp\Big(-j_q\sum_{j_{q-1}=1}^{k_{q-1}}\lambda_{j_{q-1}}\Big(\dots\Big(1-\exp\Big(-j_1\sum_{j=1}^{k}\lambda_j(1-u^j)\Big)\Big)\dots\Big)\Big)\Big)\Big).
\end{equation*}}
\end{proposition}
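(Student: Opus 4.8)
The plan is to prove the formula by induction on the number of subordinations, reading the nested exponential as the result of unrolling a one-step recursion that is exactly the pgf computation already performed for the IGCP in \eqref{IGDCpgf}.

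First I would introduce the partial compositions $P_0(t)=M(t)$ and $P_i(t)=P_{i-1}(M_i(t))$ for $1\le i\le q$, so that $P_q(t)=\hat M^q(t)$ and, since $M_i$ is independent of $M,M_1,\dots,M_{i-1}$, each $P_{i-1}$ is independent of $M_i$. Because every $P_i$ is a composition of independent L\'evy processes, it is itself a L\'evy process, so its pgf has the form $\mathbb{E}(u^{P_i(t)})=\exp(t\psi_i(u))$ for $|u|\le1$; this defines the exponents $\psi_i$. The base case is immediate from \eqref{pgfmt}, namely $\psi_0(u)=-\sum_{j=1}^{k}\lambda_j(1-u^j)$.

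For the inductive step I would condition on $M_i(t)$. Using independence together with $\mathbb{E}(u^{P_{i-1}(s)})=\exp(s\psi_{i-1}(u))$,
\[
\mathbb{E}(u^{P_i(t)})=\mathbb{E}\big(\exp(M_i(t)\psi_{i-1}(u))\big)=\mathbb{E}\big(e^{\theta M_i(t)}\big)\Big|_{\theta=\psi_{i-1}(u)},
\]
which is the moment generating function of the GCP $\{M_i(t)\}_{t\ge0}$ evaluated at $\theta=\psi_{i-1}(u)$. Applying \eqref{mgfmt} then yields the recursion
\[
\psi_i(u)=-\sum_{j_i=1}^{k_i}\lambda_{j_i}\big(1-e^{j_i\psi_{i-1}(u)}\big).
\]
Since $\psi_{i-1}(u)\le0$ for $u\in[0,1]$, the substituted argument $a=e^{\psi_{i-1}(u)}$ stays in $(0,1]$, so no domain issue arises (equivalently one may invoke \eqref{pgfmt} with this $a$). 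This one-subordination step is precisely the IGCP computation \eqref{IGDCpgf}.

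Finally I would unroll the recursion from $i=q$ down to $i=0$: substituting $\psi_{q-1}$ into $\psi_q$, then $\psi_{q-2}$ into $\psi_{q-1}$, and so on, reproduces the nested exponentials, with the innermost factor $e^{j_1\psi_0(u)}=\exp(-j_1\sum_{j=1}^{k}\lambda_j(1-u^j))$ supplied by the base case. The claimed identity follows from $\hat G^q(u,t)=\mathbb{E}(u^{P_q(t)})=\exp(t\psi_q(u))$. The only points requiring care are the L\'evy property at each level, which legitimizes the exponential form $\exp(t\psi_i)$ and hence the single-step conditioning, and the bookkeeping of the nested indices $j_1,\dots,j_q$; neither is a genuine obstacle, so the argument is essentially routine once the recursion is identified.
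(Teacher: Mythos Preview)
Your proposal is correct and follows essentially the same approach as the paper: both argue by induction on $q$, conditioning at each step on the outermost GCP and applying the moment generating function formula \eqref{mgfmt} (equivalently, the IGCP computation \eqref{IGDCpgf}). The paper simply works out $q=1,2,3$ explicitly and then says ``Proceeding inductively,'' whereas you package the same computation more cleanly via the exponents $\psi_i$ and the recursion $\psi_i(u)=-\sum_{j_i=1}^{k_i}\lambda_{j_i}(1-e^{j_i\psi_{i-1}(u)})$.
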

\begin{proof}
For $q=1$, by using \eqref{IGDCpgf},  we have
\begin{align*}
\mathbb{E}(u^{M(M_1(t))})=\exp\Big(-\sum_{j_1=1}^{k_1}\lambda_{j_1}t\Big(1-\exp\Big(-j_1\sum_{j=1}^{k}\lambda_j(1-u^j)\Big)\Big)\Big).
\end{align*}
Similarly, for $q=2$ and $q=3$, we get
{\small\begin{align*}
\mathbb{E}(u^{M(M_1(M_2(t)))})&=\mathbb{E}\Big(\exp\Big(-M_2(t)\sum_{j_1=1}^{k_1}\lambda_{j_1}\Big(1-\exp\Big(-j_1\sum_{j=1}^{k}\lambda_j(1-u^j)\Big)\Big)\Big)\Big)\\
&=\exp\Big(-\sum_{j_2=1}^{k_2}\lambda_{j_2}t\Big(1-\exp\Big(-j_2\sum_{j_1=1}^{k_1}\lambda_{j_1}\Big(1-\exp\Big(-j_1\sum_{j=1}^{k}\lambda_j(1-u^j)\Big)\Big)\Big)\Big)\Big)
\end{align*}}
and 
{\footnotesize\begin{align*}
\mathbb{E}&(u^{M(M_1(M_2(M_3(t))))})\\
&=\exp\Big(-M_3(t)\sum_{j_2=1}^{k_2}\lambda_{j_2}\Big(1-\exp\Big(-j_2\sum_{j_1=1}^{k_1}\lambda_{j_1}\Big(1-\exp\Big(-j_1\sum_{j=1}^{k}\lambda_j(1-u^j)\Big)\Big)\Big)\Big)\Big)\\
&=\exp\Big(-\sum_{j_3=1}^{k_3}\lambda_{j_3}t\Big(1 -\exp\Big(-j_3\sum_{j_2=1}^{k_2}\lambda_{j_2}\Big(1-\exp\Big(-j_2\sum_{j_1=1}^{k_1}\lambda_{j_1}\Big(1-\exp\Big(-j_1\sum_{j=1}^{k}\lambda_j(1-u^j)\Big)\Big)\Big)\Big)\Big)\Big)\Big),
\end{align*}}
respectively. Proceeding inductively, we obtain the required result.
\end{proof}
By using \eqref{covgcp} and Theorem 2.1 of Leonenko \textit{et al.} (2014), the mean and variance of $q$-iterated GCP can be obtained as follows:
\begin{align*}
\mathbb{E}(\hat{M}^q(t))&=\mathbb{E}(M(1))\mathbb{E}(M_1(M_2(\dots(M_q(t))\dots)))\\
&=\mathbb{E}(M(1))\mathbb{E}(M_1(1))\mathbb{E}(M_2(M_3(\dots(M_q(t))\dots)))\\
&\ \, \vdots\\
&=\mathbb{E}(M(1))\Big(\prod_{i=1}^{q-1}\mathbb{E}(M_i(1))\Big)\mathbb{E}(M_q(t))\\
&=\sum_{j=1}^{k}j\lambda_j\Big(\prod_{i=1}^{q}\sum_{j_i=1}^{k_i}j_i\lambda_{j_i}\Big)t
\end{align*}
and
{\small\begin{align*}
\operatorname{Var}(\hat{M}^q(t))&=\operatorname{Var}(M_1(1))\mathbb{E}(M_1(M_2(\dots(M_q(t))\dots))) +(\mathbb{E}(M(1)))^2\operatorname{Var}(M_1(M_2(\dots(M_q(t))\dots)))\\
&\ \, \vdots\\
&=\operatorname{Var}(M_1(1))\mathbb{E}(M_1(M_2(\dots(M_q(t))\dots)))+(\mathbb{E}(M(1)))^2\Big(\prod_{r=1}^{q-1}(\mathbb{E}(M_r(1)))^2\Big)\operatorname{Var}(M_q(t))\\
&\ \  +(\mathbb{E}(M(1)))^2\sum_{r=1}^{q}\Big(\operatorname{Var}(M_r(1))\Big(\prod_{l=1}^{r-1}(\mathbb{E}(M_l(1)))^2\Big)\mathbb{E}(M_{r+1}(M_{r+2}(\dots(M_q(t))\dots)))\Big)\\
&=\sum_{j=1}^{k}j^2\lambda_j t\Big(\prod_{i=1}^{q}\sum_{j_i=1}^{k_i}j_i\lambda_{j_i}\Big)+\Big(\sum_{j=1}^{k}j\lambda_j\Big)^2\Big(\prod_{r=1}^{q-1}\Big(\sum_{j_r=1}^{k_r}j_r\lambda_{j_r}\Big)^2\Big)\sum_{j_q=1}^{k_q}j_q^2\lambda_{j_q}t\\
&\ \ +\Big(\sum_{j=1}^{k}j\lambda_j\Big)^2\sum_{r=1}^{q-1}\Big(\sum_{j_r=1}^{k_r}j_r^2\lambda_{j_r}t\Big(\prod_{l=1}^{r-1}\Big(\sum_{j_l=1}^{k_l}j_l\lambda_{j_l}\Big)^2\Big)\Big(\prod_{i=r+1}^{q}j_i\lambda_{j_i}\Big)\Big),
\end{align*}}
respectively.
\begin{remark}
As $
\operatorname{Var}(\hat{M}^q(t))-\mathbb{E}(\hat{M}^q(t))>0$,
the $q$-iterated GCP is overdispersed.
\end{remark}

\section{A time-changed variant of IGCP}
In this section, we study a  time-changed variant of the IGCP by subordinating it with the first hitting time of a stable subordinator. We call it the time-changed IGCP. It is defined as follows:
\begin{equation}\label{IGFCPDEF}
\hat{M}^\alpha(t)\coloneqq \hat{M}((Y^\alpha(t))),\, t\ge0,
\end{equation}
where $\{Y^\alpha(t)\}_{t\ge0}$, $0<\alpha<1$ is an inverse $\alpha$-stable subordinator independent of the IGCP $\{\hat{M}(t)\}_{t\ge0}$. 
\begin{remark}
Note that $\hat{M}^\alpha(t)=M(M^\alpha_0(t))$, $t\ge0$, where $\{M_0^\alpha(t)\}_{t\ge0}$  is the GFCP. So, the time-changed IGCP is the GCP subordinated by an independent GFCP $\{M_0^\alpha(t)\}_{t\ge0}$.
\end{remark}
The pgf of time-changed IGCP can be obtained as follows:
\begin{align*}
\mathbb{E}(u^{\hat{M}^\alpha(t)})&=\mathbb{E}(\mathbb{E}(u^{\hat{M}^\alpha(t)}|M_0^\alpha(t)))\\
&=\mathbb{E}\Big(\exp\Big(-M_0^\alpha(t)\sum_{j=1}^{k}\lambda_j(1-u^j)\Big)\Big)\\
&=E_{\alpha,1}\Big(\sum_{j_0=1}^{k_0}\mu_{j_0}t^\alpha\Big(\exp\Big(j_0\sum_{j=1}^{k}\lambda_j(u^j-1)\Big)-1\Big)\Big),\ |u|\le1,
\end{align*}
where the last step follows by using Eq. (14) of Kataria and Khandakar (2022).
\begin{theorem}\label{thmIgfpmf}
The state probabilities $\hat{q}^\alpha(n,t)=\mathrm{Pr}\{\hat{M}^\alpha(t)=n\}$, $n\ge0$ of time-changed IGCP are given by
{\footnotesize
	\begin{equation*}
		\hat{q}^\alpha(n,t)=\sum_{\Omega(k,n)}\Big(\prod_{j=1}^{k}\frac{\lambda_{j}^{n_{j}}}{n_{j}!}\Big)\sum_{\sum_{j_0=1}^{k_0}r_{j_0}=z_{k}}z_{k}!\Big(\prod_{j_0=1}^{k_0}\frac{j_0^{r_{j_0}}}{r_{j_0}!}\Big)\sum_{\substack{x_{j_0}\ge0\\1\le j_0\le k_0}}\Big(\prod_{j_0=1}^{k_0}\frac{x_{j_0}^{r_{j_0}}(\mu_{j_0}t^\alpha e^{-j_0\lambda})^{x_{j_0}}}{x_{j_0}!}\Big)z_{k_0}! E_{\alpha,\alpha z_{k_0}+1}^{z_{k_0}+1}(-\mu t^\alpha),
\end{equation*}}
where  $\Omega(k,n)=\{(n_1,n_2,\dots,n_k):\sum_{j=1}^{k}jn_j=n,\, n_j\in\mathbb{N}_0\}$, $z_k=n_1+n_2+\dots+n_k$ and $z_{k_0}=x_1+x_2+\dots+x_{k_0}$.
\end{theorem}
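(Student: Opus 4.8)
The plan is to follow the proof of Theorem \ref{thmpmf} almost verbatim, but with the outer GCP $\{M_0(t)\}_{t\ge0}$ replaced by the GFCP $\{M_0^\alpha(t)\}_{t\ge0}$. Using the representation $\hat{M}^\alpha(t)=M(M_0^\alpha(t))$ together with the fact that $\{M(t)\}_{t\ge0}$ is independent of the pair $(\{M_0(t)\}_{t\ge0},\{Y^\alpha(t)\}_{t\ge0})$ in the construction, and hence of $\{M_0^\alpha(t)\}_{t\ge0}=\{M_0(Y^\alpha(t))\}_{t\ge0}$, I would first condition on the value of the subordinated clock:
\begin{equation*}
\hat{q}^\alpha(n,t)=\sum_{s=0}^{\infty}\mathrm{Pr}\{M(s)=n\}\,\mathrm{Pr}\{M_0^\alpha(t)=s\}.
\end{equation*}

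For the first factor I would insert the GCP state probability \eqref{p(n,t)}, writing $\mathrm{Pr}\{M(s)=n\}=\sum_{\Omega(k,n)}\big(\prod_{j=1}^{k}\lambda_j^{n_j}/n_j!\big)s^{z_k}e^{-\lambda s}$ with $z_k=n_1+\dots+n_k$, and pulling the $\Omega(k,n)$-sum and the $\lambda$-product out front. For the second factor I would use the GFCP state probability \eqref{GFCPPMF}, rewritten after identifying its summation index $r$ with $z_{k_0}=x_1+\dots+x_{k_0}$ as
\begin{equation*}
\mathrm{Pr}\{M_0^\alpha(t)=s\}=\sum_{\Omega(k_0,s)}z_{k_0}!\Big(\prod_{j_0=1}^{k_0}\frac{(\mu_{j_0}t^\alpha)^{x_{j_0}}}{x_{j_0}!}\Big)E_{\alpha,\alpha z_{k_0}+1}^{z_{k_0}+1}(-\mu t^\alpha).
\end{equation*}

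The remaining algebra mirrors the chain \eqref{igcppmfep} onward. The double sum $\sum_{s=0}^{\infty}\sum_{\Omega(k_0,s)}$ collapses into a single unrestricted sum over $(x_1,\dots,x_{k_0})\in\mathbb{N}_0^{k_0}$ through $s=\sum_{j_0=1}^{k_0}j_0 x_{j_0}$, so that $s^{z_k}=\big(\sum_{j_0=1}^{k_0}j_0 x_{j_0}\big)^{z_k}$ and $e^{-\lambda s}=\prod_{j_0=1}^{k_0}(e^{-j_0\lambda})^{x_{j_0}}$. Expanding the power by the multinomial theorem produces the sum over $\sum_{j_0=1}^{k_0}r_{j_0}=z_k$ together with the factor $z_k!\prod_{j_0=1}^{k_0}j_0^{r_{j_0}}x_{j_0}^{r_{j_0}}/r_{j_0}!$, and rearranging the surviving terms gives precisely the claimed formula.

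The step I expect to matter most is the \emph{non}-collapse at the end. In Theorem \ref{thmpmf} the constant $e^{-\mu t}$ allows each inner sum $\sum_{x_{j_0}\ge0}x_{j_0}^{r_{j_0}}(e^{-j_0\lambda}\mu_{j_0}t)^{x_{j_0}}/x_{j_0}!$ to be recognized, one index at a time, as a Bell polynomial $\mathcal{B}_{r_{j_0}}$. In the time-changed setting the factor $z_{k_0}!E_{\alpha,\alpha z_{k_0}+1}^{z_{k_0}+1}(-\mu t^\alpha)$ depends on $z_{k_0}=\sum_{j_0}x_{j_0}$ and thereby couples all of the $x_{j_0}$; the product over $j_0$ no longer separates, the per-index Bell-polynomial reduction is unavailable, and the state probabilities must be left as the explicit multiple series stated in the theorem. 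I would therefore present the result in that unreduced form rather than attempt a spurious simplification.
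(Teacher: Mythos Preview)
Your proposal is correct and follows essentially the same approach as the paper's main proof: condition on $M_0^\alpha(t)$, insert the GCP pmf \eqref{p(n,t)} and the GFCP pmf \eqref{GFCPPMF}, collapse $\sum_{s\ge0}\sum_{\Omega(k_0,s)}$ into an unrestricted sum over $(x_1,\dots,x_{k_0})$, and expand $\big(\sum_{j_0}j_0x_{j_0}\big)^{z_k}$ by the multinomial theorem. Your closing remark on why the Bell-polynomial collapse fails here is a nice observation, though the paper does not comment on it.
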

\begin{proof}
By using \eqref{IGFCPDEF}, we have
\begin{align*}
\hat{q}^\alpha(n,t)&=\sum_{m=0}^{\infty}\mathrm{Pr}\{M(m)=n\}\mathrm{Pr}\{M_0^\alpha(t)=m\}\\
&=\sum_{\Omega(k,n)}\Big(\prod_{j=1}^{k}\frac{\lambda_j^{n_j}}{n_j!}\Big)\sum_{m=0}^{\infty}m^{z_{k}}e^{-m\lambda}\sum_{r=0}^{m}\sum_{\substack{x_1+x_2+\dots+x_{k_0}=r\\
		x_1+2x_2+\dots+k_0x_{k_0}=m}}r!\Big(\prod_{j_0=1}^{k_0}\frac{(\mu_{j_0}t^\alpha)^{x_{j_0}}}{x_{j_0}!}\Big)E_{\alpha,r\alpha+1}^{r+1}(-\mu t^\alpha)\\
&=\sum_{\Omega(k,n)}\Big(\prod_{j=1}^{k}\frac{\lambda_j^{n_j}}{n_j!}\Big)\sum_{\substack{x_{j_0}\ge0\\1\le j_0\le k_0}}\Big(\sum_{j_0=1}^{k_0}j_0x_{j_0}\Big)^{z_{k}}z_{k_0}!\Big(\prod_{j_0=1}^{k_0}\frac{(\mu_{j_0}t^\alpha e^{-j_0\lambda})^{x_{j_0}}}{x_{j_0}!}\Big)E_{\alpha,z_{k_0}\alpha+1}^{z_{k_0}+1}(-\mu t^\alpha),
\end{align*}
where second step follows by using \eqref{GFCPPMF} and \eqref{p(n,t)}. Finally, the proof follows by using multinomial theorem.
\end{proof}
For an alternate proof of Theorem \ref{thmIgfpmf}, we refer the reader to Appendix A4.
\begin{proposition}\label{prpdefIgfcp}
The state probabilities $\hat{q}^\alpha(n,t)$, $n\ge0$ solve the following system of differential equations:
\begin{equation*}
\frac{\mathrm{d}^\alpha}{\mathrm{d}t^\alpha}\hat{q}^\alpha(n,t)=-\sum_{j_0=1}^{k_0}\mu_{j_0}\hat{q}^\alpha(n,t)+\sum_{j_0=1}^{k_0}\mu_{j_0}e^{-j_0\lambda}\sum_{m=0}^{n}\sum_{\Omega(k,m)}\Big(\prod_{j=1}^{k}\frac{(j_0\lambda_j)^{x_j}}{x_j!}\Big)\hat{q}^\alpha(n-m,t)
\end{equation*}
with $\hat{q}^\alpha(n,0)=\delta_n(0)$.
Here, $\Omega(k,m)=\{(x_1,x_2,\dots,x_{k}):\sum_{j=1}^{k}jx_j=m,\, x_j\in\mathbb{N}_0\}$.
\end{proposition}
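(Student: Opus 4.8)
The plan is to exploit the subordination representation $\hat{M}^\alpha(t)=M(M_0^\alpha(t))$ recorded in the remark following \eqref{IGFCPDEF}, where $\{M_0^\alpha(t)\}_{t\ge0}$ is a GFCP with rates $\mu_1,\mu_2,\dots,\mu_{k_0}$ independent of the GCP $\{M(t)\}_{t\ge0}$. Conditioning on the value of $M_0^\alpha(t)$ gives the mixture representation
\[
\hat{q}^\alpha(n,t)=\sum_{m=0}^{\infty}p(n,m)\,p_0^\alpha(m,t),
\]
where $p(n,m)=\mathrm{Pr}\{M(m)=n\}$ is given by \eqref{p(n,t)} and $p_0^\alpha(m,t)=\mathrm{Pr}\{M_0^\alpha(t)=m\}$. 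Since the variable $t$ enters only through $p_0^\alpha(m,t)$, I would apply the Caputo derivative $\mathrm{d}^\alpha/\mathrm{d}t^\alpha$ term by term and invoke the GFCP system \eqref{gfcpdeq} (with $k,\lambda_j$ there replaced by $k_0,\mu_{j_0}$). The diagonal term $-\sum_{j_0=1}^{k_0}\mu_{j_0}p_0^\alpha(m,t)$ sums against $p(n,m)$ to $-\mu\hat{q}^\alpha(n,t)$, matching the first term of the claim since $\mu=\sum_{j_0=1}^{k_0}\mu_{j_0}$; the remaining work is to show that the shift term reproduces the double sum on the right-hand side.

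For that term I would interchange the two finite/countable summations and reindex $l=m-j_0$, converting $\sum_{m}p(n,m)\sum_{j_0=1}^{\min\{m,k_0\}}\mu_{j_0}p_0^\alpha(m-j_0,t)$ into $\sum_{j_0=1}^{k_0}\mu_{j_0}\sum_{l=0}^{\infty}p(n,l+j_0)\,p_0^\alpha(l,t)$. The key structural input is that $\{M(t)\}_{t\ge0}$ is a GCP, hence a L\'evy process with stationary independent increments, so decomposing $M(l+j_0)$ into $M(l)$ plus an independent increment distributed as $M(j_0)$ yields $p(n,l+j_0)=\sum_{m'=0}^{n}p(m',j_0)\,p(n-m',l)$. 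Substituting this and summing over $l$ first collapses $\sum_{l}p(n-m',l)p_0^\alpha(l,t)$ back to $\hat{q}^\alpha(n-m',t)$, while \eqref{p(n,t)} gives $p(m',j_0)=e^{-j_0\lambda}\sum_{\Omega(k,m')}\prod_{j=1}^{k}(j_0\lambda_j)^{x_j}/x_j!$ after factoring $\prod_{j=1}^{k}e^{-j_0\lambda_j}=e^{-j_0\lambda}$. Renaming $m'\to m$ then produces exactly the claimed right-hand side, and the initial condition $\hat{q}^\alpha(n,0)=\delta_n(0)$ follows from $M_0^\alpha(0)=0$ and $M(0)=0$.

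The step I expect to require the most care is this increment-splitting identity together with the reindexing: one must track that $m'$ runs only up to $n$ (because $M$ is non-decreasing) and separate the $e^{-j_0\lambda}$ factor cleanly from the polynomial part of the GCP pmf, which is the same algebraic rearrangement used in the alternate proof of Proposition \ref{prpde}. Justifying the term-by-term action of the Caputo derivative (its interchange with the infinite sum over $m$) is routine here, since $0\le p(n,m)\le1$ and $\sum_{m}p_0^\alpha(m,t)=1$, so the series and the convolution integral in \eqref{caputo} converge uniformly on compact $t$-intervals. As an alternative route one could pass to Laplace transforms, using the inverse-stable-subordinator relation $\tilde{q}^\alpha(n,s)=s^{\alpha-1}\tilde{p}(n,s^\alpha)$ in the transform of \eqref{IGCPDE} and recognising the left-hand side via the Caputo transform rule stated after \eqref{caputo}; inversion then gives the same system.
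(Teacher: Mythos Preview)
Your proposal is correct and in fact covers both proofs that the paper supplies: your primary argument via the mixture representation $\hat{q}^\alpha(n,t)=\sum_{m\ge0}p(n,m)p_0^\alpha(m,t)$, the GFCP system \eqref{gfcpdeq}, and the increment-splitting identity $p(n,l+j_0)=\sum_{m'=0}^{n}p(m',j_0)p(n-m',l)$ is exactly the alternate proof given in Appendix A5, while the Laplace-transform route you sketch at the end is the paper's main proof. The only difference is the order of presentation; the paper leads with the transform argument and relegates your conditioning argument to the appendix.
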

\begin{proof}
 From \eqref{IGFCPDEF}, we have
\begin{equation}\label{IGFCPDEFf}
\hat{q}^\alpha(n,t)=\int_{0}^{\infty}\hat{p}(n,x)h(x,t)\,\mathrm{d}x,
\end{equation}
where $h(x,t)$ is the density of inverse $\alpha$-stable subordinator and $\hat{p}(n,x)$ is the pmf of IGCP. 
On taking the Laplace transform of \eqref{IGFCPDEFf}, we get
\begin{equation}\label{tt}
\tilde{\hat{q}}^\alpha(n,s)=s^{\alpha-1}\int_{0}^{\infty}\hat{p}(n,x)e^{-s^\alpha x}\mathrm{d}x=s^{\alpha-1}\tilde{\hat{p}}(n,s^\alpha)
\end{equation}
which follows by using Eq. (3.13) of Meerschaert and Scheffler (2008).
Also, on taking the Laplace transform with respect to $t$ on both sides of  \eqref{IGCPDE}, we get
\begin{equation*}
s\tilde{\hat{p}}(n,s)-\hat{p}(n,0)=-\sum_{j_0=1}^{k_0}\mu_{j_0}\tilde{\hat{p}}(n,s)+\sum_{j_0=1}^{k_0}\mu_{j_0}e^{-j_0\lambda}\sum_{m=0}^{n}\sum_{\Omega(k,m)}\Big(\prod_{j=1}^{k}\frac{(j_0\lambda_j)^{x_j}}{x_j!}\Big)\tilde{\hat{p}}(n-m,s)
\end{equation*}
which by using \eqref{tt} becomes
{\small\begin{equation*}
s^\alpha\tilde{\hat{q}}^\alpha(n,s)-s^{\alpha-1}\hat{q}^\alpha(n,0)=-\sum_{j_0=1}^{k_0}\mu_{j_0}\tilde{\hat{q}}^\alpha(n,s)+\sum_{j_0=1}^{k_0}\mu_{j_0}e^{-j_0\lambda}\sum_{m=0}^{n}\sum_{\Omega(k,m)}\Big(\prod_{j=1}^{k}\frac{(j_0\lambda_j)^{x_j}}{x_j!}\Big)\tilde{\hat{q}}^\alpha(n-m,s).
\end{equation*}}
On taking the inverse Laplace transform on both sides of the above equation, we get the required result.
\end{proof}
 For an alternate proof of the Proposition \ref{prpdefIgfcp}, we refer the reader to Appendix A5.

By using Theorem 2.1 of Leonenko \textit{et al.} (2014), the mean and variance of time-changed IGCP can be obtained as follows:
\begin{equation*}
\mathbb{E}(\hat{M}^\alpha(t))=\mathbb{E}(\hat{M}(1))\mathbb{E}(Y^\alpha(t))=\frac{S t^\alpha}{\Gamma(\alpha+1)}
\end{equation*}
and
\begin{equation*}
\operatorname{Var}(\hat{M}^\alpha(t))=(\mathbb{E}(\hat{M}(1)))^2\operatorname{Var}(Y^\alpha(t))+\operatorname{Var}(\hat{M}(1))\mathbb{E}(Y^\alpha(t))=\frac{Rt^{2\alpha}+T t^\alpha}{\Gamma(\alpha+1)},
\end{equation*}
where we have used \eqref{meanvarinv}, \eqref{IGCPMEAN} and \eqref{IGCPVAR}.
Here, $R=S^2\Big(\frac{2}{\Gamma(2\alpha+1)}-\frac{1}{\Gamma^2(\alpha+1)}\Big)$.
\begin{remark}
Alternatively, by using \eqref{meanvarinv}, \eqref{meangfcp} and \eqref{vargfcp}, the mean and variance of time-changed IGCP can be obtained as follows:
$	\mathbb{E}(\hat{M}^\alpha(t))=\mathbb{E}(M(1))\mathbb{E}(M_0^\alpha(t))$ and $	\operatorname{Var}(\hat{M}^\alpha(t))=(\mathbb{E}(M(1)))^2\operatorname{Var}(M_0^\alpha(t))+\operatorname{Var}(M(1))\mathbb{E}(M_0^\alpha(t))$.
\end{remark}
 For $0<s<t$, let the correlation function for a non-stationary stochastic process satisfies
 \begin{equation*}
 \operatorname{Corr}(X(s),X(t))\sim c(s)t^{-\theta}, \ \ \text{as $t\to \infty$}
 \end{equation*}
for some $c(s)>0$. Then, the process $\{X(t)\}_{t\ge0}$ is said to exhibit the LRD property if $\theta\in(0,1)$ and the SRD property if $\theta\in(1,2)$. 

For fixed $s$ and large $t$, the covariance of $\{\hat{M}^\alpha(t)\}_{t\ge0}$
can be obtained as follows:
 \begin{equation*}
	\operatorname{Cov}(\hat{M}^\alpha(t), \hat{M}^\alpha(s))=\operatorname{Var}(\hat{M}(1))\mathbb{E}(Y^\alpha(s))+(\mathbb{E}(\hat{M}(1)))^2 \operatorname{Cov}(Y^\alpha(s),Y^\alpha(t))
\end{equation*}
which follows by using Theorem 2.1 of Leonenko \textit{et al.} (2014).
Now, by using \eqref{meanvarinv}, \eqref{covinv}, \eqref{IGCPMEAN} and \eqref{IGCPVAR}, we get
\begin{equation*}
\operatorname{Cov}(\hat{M}^\alpha(s),\hat{M}^\alpha(t))\sim \frac{Ts^\alpha}{
\Gamma(\alpha+1)}+\frac{S^2}{\Gamma^2(\alpha+1)}\Big(\alpha s^{2\alpha}B(\alpha,\alpha+1)-\frac{\alpha^2}{(\alpha+1)}\frac{s^{\alpha+1}}{t^{1-\alpha}}\Big),\ \ \text{as $t\to\infty$}.
\end{equation*}
	\begin{theorem}
The time-changed IGCP exhibits the LRD property.
\end{theorem}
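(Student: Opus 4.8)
The plan is to read off the asymptotic order of the correlation function $\operatorname{Corr}(\hat{M}^\alpha(s),\hat{M}^\alpha(t))$ as $t\to\infty$ with $s$ fixed, and then match the resulting decay exponent against the stated LRD/SRD criterion. Everything needed has already been assembled just above the theorem: the covariance asymptotics, the variance formula $\operatorname{Var}(\hat{M}^\alpha(t))=\frac{Rt^{2\alpha}+Tt^\alpha}{\Gamma(\alpha+1)}$, and the definition that $\theta\in(0,1)$ signals LRD.

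First I would examine the numerator. In the displayed covariance, the only $t$-dependent contribution is the term $-\frac{\alpha^2}{\alpha+1}\frac{s^{\alpha+1}}{t^{1-\alpha}}$, which vanishes as $t\to\infty$ precisely because $0<\alpha<1$ gives $1-\alpha>0$. Hence the covariance converges to the $t$-independent positive constant
\begin{equation*}
C(s)=\frac{Ts^\alpha}{\Gamma(\alpha+1)}+\frac{\alpha S^2 s^{2\alpha}B(\alpha,\alpha+1)}{\Gamma^2(\alpha+1)},
\end{equation*}
so that $\operatorname{Cov}(\hat{M}^\alpha(s),\hat{M}^\alpha(t))\sim C(s)$ as $t\to\infty$. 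Next I would treat the denominator $\sqrt{\operatorname{Var}(\hat{M}^\alpha(s))\operatorname{Var}(\hat{M}^\alpha(t))}$: since $2\alpha>\alpha$, the $t^{2\alpha}$ term dominates the variance, giving $\operatorname{Var}(\hat{M}^\alpha(t))\sim \frac{R}{\Gamma(\alpha+1)}t^{2\alpha}$ with $R>0$. Combining the two asymptotics yields
\begin{equation*}
\operatorname{Corr}(\hat{M}^\alpha(s),\hat{M}^\alpha(t))\sim \frac{C(s)}{\sqrt{\operatorname{Var}(\hat{M}^\alpha(s))}\,\sqrt{R/\Gamma(\alpha+1)}}\;t^{-\alpha}=:c(s)\,t^{-\alpha},
\end{equation*}
so the decay exponent is $\theta=\alpha$.

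Finally, since $0<\alpha<1$ we have $\theta=\alpha\in(0,1)$, which is exactly the LRD regime in the stated criterion, and this establishes the theorem. I expect no genuine obstacle here, as the argument is a direct substitution into the already-derived covariance and variance asymptotics; the only points requiring care are verifying that the $t$-dependent correction in the covariance is asymptotically negligible (which rests on $\alpha<1$) and confirming that $C(s)>0$, and hence $c(s)>0$, so that the asymptotic equivalence $\operatorname{Corr}\sim c(s)t^{-\alpha}$ is meaningful.
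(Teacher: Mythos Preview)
Your proposal is correct and follows essentially the same route as the paper's proof: both feed the already-derived covariance asymptotics and the variance formula into the correlation, observe that the numerator tends to a positive $s$-dependent constant while $\sqrt{\operatorname{Var}(\hat{M}^\alpha(t))}$ grows like $t^{\alpha}$, and conclude $\operatorname{Corr}\sim c(s)t^{-\alpha}$ with $\alpha\in(0,1)$. The only cosmetic difference is that the paper writes the combined ratio in one step and then simplifies $\alpha B(\alpha,\alpha+1)=\Gamma^2(\alpha+1)/\Gamma(2\alpha+1)$ in its explicit expression for $c(s)$, whereas you keep $C(s)$ in terms of the Beta function.
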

\begin{proof}
For fixed $s$ and large $t$, we have
\begin{align*}
\operatorname{Corr}(\hat{M}^\alpha(s),\hat{M}^\alpha(t))&\sim\frac{Ts^\alpha+S^2\Big(\alpha s^{2\alpha}B(\alpha,\alpha+1)-\frac{\alpha^2}{(\alpha+1)}\frac{s^{\alpha+1}}{t^{1-\alpha}}\Big)}{\sqrt{\operatorname{Var}(\hat{M}^\alpha(s))}\sqrt{Rt^{2\alpha}+T t^\alpha}}\\
&\sim c(s)t^{-\alpha},
\end{align*}
where
\begin{equation*}
c(s)=\frac{Ts^\alpha\Gamma(\alpha+1)\Gamma(2\alpha+1)+S^2s^{2\alpha}\Gamma^2(\alpha+1)}{\Gamma^{3/2}(\alpha+1)\Gamma(2\alpha+1)\sqrt{R\operatorname{Var}(\hat{M}^\alpha(s))}}.
\end{equation*} 
As $\alpha\in(0,1)$, the time-changed IGCP exhibits the LRD property.
\end{proof}
For fixed $h\ge0$, the increment process of time-changed IGCP is defined as 
\begin{equation*}
	\hat{Z}^\alpha_h(t)\coloneqq\hat{M}^\alpha(t+h)-\hat{M}^\alpha(t),\, t\ge0.
\end{equation*}
\begin{proposition}
	The increment process $\{\hat{Z}^\alpha_h(t)\}_{t\ge0}$ exhibits the SRD property.
\end{proposition}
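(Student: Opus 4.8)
The plan is to replicate the covariance computation used for the LRD property of $\{\hat{M}^\alpha(t)\}_{t\ge0}$, but to exploit the extra cancellation produced by differencing in \emph{both} time arguments, which speeds up the decay of the correlation past exponent $1$. First I would write $\operatorname{Cov}(\hat{Z}^\alpha_h(s),\hat{Z}^\alpha_h(t))$ as the four-term difference of $\operatorname{Cov}(\hat{M}^\alpha(a),\hat{M}^\alpha(b))$ over $a\in\{s,s+h\}$ and $b\in\{t,t+h\}$, and for each term substitute the exact representation (via Theorem 2.1 of Leonenko \textit{et al.} (2014), as already used above)
\[
\operatorname{Cov}(\hat{M}^\alpha(a),\hat{M}^\alpha(b))=\operatorname{Var}(\hat{M}(1))\mathbb{E}(Y^\alpha(a))+(\mathbb{E}(\hat{M}(1)))^2\operatorname{Cov}(Y^\alpha(a),Y^\alpha(b)),\quad a\le b,
\]
which is valid for all four pairs once $t$ is large enough that $s+h<t$. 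The $\mathbb{E}(Y^\alpha(\cdot))$ contributions depend only on the smaller argument and cancel in pairs under the alternating signs, leaving the clean reduction
\[
\operatorname{Cov}(\hat{Z}^\alpha_h(s),\hat{Z}^\alpha_h(t))=S^2\,\operatorname{Cov}\big(Y^\alpha(s+h)-Y^\alpha(s),\,Y^\alpha(t+h)-Y^\alpha(t)\big),
\]
where $S=\mathbb{E}(\hat{M}(1))$ by \eqref{IGCPMEAN}.

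Next I would feed the fixed-$s$, large-$t$ asymptotic \eqref{covinv} into each of the four covariances $\operatorname{Cov}(Y^\alpha(a),Y^\alpha(b))$. The leading, $t$-independent pieces $\alpha a^{2\alpha}B(\alpha,\alpha+1)$ again cancel under the alternating signs, so only the cross terms $-\tfrac{\alpha^2}{\alpha+1}a^{\alpha+1}b^{\alpha-1}$ survive. Factoring them as $\big((s+h)^{\alpha+1}-s^{\alpha+1}\big)\big((t+h)^{\alpha-1}-t^{\alpha-1}\big)$ and using $(t+h)^{\alpha-1}-t^{\alpha-1}\sim-(1-\alpha)h\,t^{\alpha-2}$ gives
\[
\operatorname{Cov}(\hat{Z}^\alpha_h(s),\hat{Z}^\alpha_h(t))\sim\frac{S^2\alpha^2(1-\alpha)h\big((s+h)^{\alpha+1}-s^{\alpha+1}\big)}{(\alpha+1)\Gamma^2(\alpha+1)}\,t^{\alpha-2},\qquad t\to\infty,
\]
so the numerator of the correlation decays like $t^{\alpha-2}$.

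For the denominator, $\sqrt{\operatorname{Var}(\hat{Z}^\alpha_h(s))}$ is a positive constant in $t$, so only the growth of $\operatorname{Var}(\hat{Z}^\alpha_h(t))$ matters. The same pairing of the $\mathbb{E}(Y^\alpha(\cdot))$ terms yields the exact identity
\[
\operatorname{Var}(\hat{Z}^\alpha_h(t))=S^2\,\operatorname{Var}\big(Y^\alpha(t+h)-Y^\alpha(t)\big)+\frac{T\big((t+h)^\alpha-t^\alpha\big)}{\Gamma(\alpha+1)},
\]
with $T$ as in \eqref{IGCPVAR}. The second term is $O(t^{\alpha-1})$, while the first is of order $t^{2\alpha-1}$; since $2\alpha-1>\alpha-1$, the first dominates and $\operatorname{Var}(\hat{Z}^\alpha_h(t))\sim c_2\,t^{2\alpha-1}$ for some $c_2>0$. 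This is the step I expect to be the main obstacle, because \eqref{covinv} only controls a covariance with one \emph{fixed} argument, whereas $\operatorname{Var}(Y^\alpha(t+h)-Y^\alpha(t))$ involves two arguments that are both large and a bounded distance apart. I would supply its order either by citing the known increment-variance asymptotic for the inverse stable subordinator (Leonenko \textit{et al.} (2014)), or self-containedly from the self-similarity $(Y^\alpha(t),Y^\alpha(t+h))\overset{d}{=}t^\alpha\big(Y^\alpha(1),Y^\alpha(1+h/t)\big)$, which gives $\operatorname{Var}(Y^\alpha(t+h)-Y^\alpha(t))=t^{2\alpha}g(h/t)$ with $g(\delta)=\operatorname{Var}(Y^\alpha(1+\delta)-Y^\alpha(1))\sim c_0\delta$ as $\delta\to0^+$ (the linear order coming from the kink of the covariance surface on the diagonal, as for Brownian motion).

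Finally I would assemble the correlation. Combining the two asymptotics,
\[
\operatorname{Corr}(\hat{Z}^\alpha_h(s),\hat{Z}^\alpha_h(t))\sim\frac{c_1(s)\,t^{\alpha-2}}{\sqrt{\operatorname{Var}(\hat{Z}^\alpha_h(s))}\,\sqrt{c_2}\;t^{(2\alpha-1)/2}}=c(s)\,t^{-3/2},\qquad t\to\infty,
\]
so the correlation decays with exponent $\theta=3/2$, independent of $\alpha$. Since $3/2\in(1,2)$, the increment process $\{\hat{Z}^\alpha_h(t)\}_{t\ge0}$ has the SRD property, as claimed. The only genuinely new estimate beyond the LRD proof is the near-diagonal increment variance of $Y^\alpha$; everything else is bilinear bookkeeping together with \eqref{covinv}, \eqref{meanvarinv}, \eqref{IGCPMEAN} and \eqref{IGCPVAR}.
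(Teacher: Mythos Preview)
The paper does not give a proof here; it merely states that the argument follows Theorem 2 of Kataria and Khandakar (2022) and omits the details. Your proposal is precisely the computation that reference carries out: reduce $\operatorname{Cov}(\hat{Z}^\alpha_h(s),\hat{Z}^\alpha_h(t))$ to $S^2\operatorname{Cov}(Y^\alpha(s+h)-Y^\alpha(s),Y^\alpha(t+h)-Y^\alpha(t))$ via Theorem 2.1 of Leonenko \textit{et al.} (2014), extract the $t^{\alpha-2}$ decay from the second-order term in \eqref{covinv}, and combine with the $t^{2\alpha-1}$ growth of the increment variance to obtain the exponent $\theta=3/2\in(1,2)$. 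So your route coincides with the one the paper invokes.

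One point deserves a sharper justification than you give it. You correctly flag the near-diagonal increment variance of $Y^\alpha$ as the delicate step, but the covariance step has the same issue: you are subtracting four instances of the asymptotic \eqref{covinv} whose leading terms cancel, and an asymptotic equivalence $f\sim g$ does not in general survive such cancellation. In the cited reference this is handled not by \eqref{covinv} alone but by the exact covariance formula for $Y^\alpha$ (involving the incomplete beta function $B(\alpha,1+\alpha;s/t)$), from which \eqref{covinv} is derived with an explicit $o(t^{\alpha-1})$ remainder uniform in the fixed first argument; that remainder is what makes both the four-term difference and the increment-variance estimate rigorous. Your self-similarity argument $\operatorname{Var}(Y^\alpha(t+h)-Y^\alpha(t))=t^{2\alpha}g(h/t)$ is fine, but the claim $g(\delta)\sim c_0\delta$ still needs that exact formula (or an equivalent input) rather than the heuristic about a diagonal kink. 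Once you import that formula, the rest of your bookkeeping is correct.
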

\begin{proof}
	The proof follows similar lines to that of Theorem 2 of Kataria and Khandakar (2022). Hence, it is omitted.
\end{proof}
By using \eqref{IGCPMEAN} and the strong law of large numbers, we get the next result.
\begin{lemma}\label{lemas}
The following asymptotic result holds true for IGCP:
\begin{equation*}
\lim_{t\to\infty}\frac{\hat{M}(t)}{t}=\sum_{j=1}^{k}j\lambda_j\sum_{j_0=1}^{k_0}j_0\mu_{j_0}, \ \ \text{in probability}.
\end{equation*}
\end{lemma}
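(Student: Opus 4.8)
The plan is to exploit that the IGCP $\{\hat{M}(t)\}_{t\ge0}$ is a non-decreasing L\'evy process and to invoke the strong law of large numbers (SLLN) along integer times, upgrading almost sure convergence to the stated convergence in probability. First I would note that, being the composition of two independent L\'evy processes, $\{\hat{M}(t)\}_{t\ge0}$ has stationary and independent increments; moreover it is a counting process, hence non-decreasing and integer-valued. In particular, the unit increments $\xi_i\coloneqq\hat{M}(i)-\hat{M}(i-1)$, $i\ge1$, are independent and identically distributed, and by \eqref{IGCPMEAN} with $t=1$ each has finite mean $\mathbb{E}(\xi_1)=\mathbb{E}(\hat{M}(1))=S$, where $S=\sum_{j=1}^{k}j\lambda_j\sum_{j_0=1}^{k_0}j_0\mu_{j_0}$.

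Next I would apply the classical SLLN to the iid sequence $\{\xi_i\}_{i\ge1}$ to obtain
\begin{equation*}
\frac{\hat{M}(n)}{n}=\frac{1}{n}\sum_{i=1}^{n}\xi_i\to S\quad\text{a.s. as }n\to\infty.
\end{equation*}
To pass from integer times to arbitrary $t$, I would use the monotonicity of the counting process: for $n=\lfloor t\rfloor$ we have $\hat{M}(n)\le\hat{M}(t)\le\hat{M}(n+1)$, whence
\begin{equation*}
\frac{\hat{M}(n)}{n}\cdot\frac{n}{n+1}\le\frac{\hat{M}(t)}{t}\le\frac{\hat{M}(n+1)}{n+1}\cdot\frac{n+1}{n}.
\end{equation*}
As $t\to\infty$ both bounding sequences converge almost surely to $S$, so by the squeeze principle $\hat{M}(t)/t\to S$ almost surely, and a fortiori in probability, which is the asserted limit.

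The only genuinely delicate point is the interpolation between integer times; I expect this to be the main (though mild) obstacle, and the non-decreasing nature of the counting process is exactly what makes the squeeze bound above legitimate, so that no maximal inequality for the fluctuations of a L\'evy process is needed. As an alternative route consistent with the hint, one may instead start from the compound representation $\hat{M}(t)\overset{d}{=}\sum_{n=1}^{M_0(t)}X_n$ of Proposition \ref{prpcgcp}, with $\mathbb{E}(X_1)=\mathbb{E}(M(1))=\sum_{j=1}^{k}j\lambda_j$; applying the SLLN to $M_0(t)/t\to\sum_{j_0=1}^{k_0}j_0\mu_{j_0}$ and to the random-index average $\frac{1}{M_0(t)}\sum_{n=1}^{M_0(t)}X_n\to\mathbb{E}(X_1)$, valid since $M_0(t)\to\infty$ almost surely, and then multiplying the two limits recovers $S$. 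Because the limit is a constant, convergence in probability is determined solely by the one-dimensional law of $\hat{M}(t)$, so this distributional identity suffices to transfer the conclusion back to the IGCP.
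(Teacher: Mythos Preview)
Your proof is correct and follows essentially the same approach as the paper, which merely states that the lemma follows from \eqref{IGCPMEAN} and the strong law of large numbers; you have simply fleshed out the details of applying the SLLN to the iid unit increments of the L\'evy process and interpolating via monotonicity. The alternative route through Proposition~\ref{prpcgcp} that you sketch is also valid and equally consonant with the paper's one-line justification.
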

\begin{proposition}
The one-dimensional distributions of time-changed IGCP are not infinitely divisible.
\end{proposition}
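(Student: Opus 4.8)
The plan is to contradict infinite divisibility by means of the classical fact that the generating function of an infinitely divisible $\mathbb{N}_0$-valued law is non-vanishing, after showing that the pgf of $\hat M^\alpha(t)$ is a Mittag-Leffler function composed with an entire map and therefore must have a complex zero.

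First I would record that, for fixed $t>0$, the pgf derived earlier,
\[
\Phi(u,t)=\mathbb{E}(u^{\hat M^\alpha(t)})=E_{\alpha,1}(h(u)),\qquad h(u)=\sum_{j_0=1}^{k_0}\mu_{j_0}t^\alpha\Big(\exp\Big(j_0\sum_{j=1}^k\lambda_j(u^j-1)\Big)-1\Big),
\]
extends to an \emph{entire} function of $u$: the map $h$ is a finite sum of compositions of polynomials with the exponential, hence entire and (since all $\lambda_j,\mu_{j_0}>0$ and $t>0$) non-constant, while $E_{\alpha,1}$ is entire. Equivalently, $\hat M^\alpha(t)$ has finite exponential moments of every order, because $\mathbb{E}(e^{\theta\hat M^\alpha(t)})=E_{\alpha,1}(h(e^\theta))<\infty$ for all $\theta\in\mathbb{R}$.

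Next I would prove the dichotomy that is the heart of the argument: \emph{if $\hat M^\alpha(t)$ were infinitely divisible, then $\Phi(\cdot,t)$ would have no zero in $\mathbb{C}$.} Being non-negative, integer-valued and infinitely divisible, $\hat M^\alpha(t)$ is compound Poisson (Feller; Steutel and van Harn (2004)), and for each $n$ one has $\Phi(\cdot,t)=\Phi_n^{\,n}$ with $\Phi_n$ the pgf of an $\mathbb{N}_0$-valued summand. Since $\mathbb{E}(e^{\theta\hat M^\alpha(t)})=\big(\mathbb{E}(e^{\theta X_1^{(n)}})\big)^n<\infty$, each summand also has all exponential moments, so $\Phi_n$ is entire as well. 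If $\Phi(u_0,t)=0$ with multiplicity $m\ge 1$, then $\Phi_n$ vanishes at $u_0$ to order $m/n$; as $\Phi_n$ is entire this order is a positive integer for every $n$, which fails once $n>m$. I expect this to be the main obstacle, because the textbook statement that the characteristic function of an infinitely divisible law is non-vanishing only controls the unit circle $|u|=1$, whereas the zero produced below may lie off it; the entire-$n$th-root argument is precisely what upgrades non-vanishing to all of $\mathbb{C}$.

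Finally I would invoke the zeros of the Mittag-Leffler function. For $0<\alpha<1$, $E_{\alpha,1}$ is entire of order $1/\alpha>1$ and has infinitely many complex zeros (Wiman; when $1/\alpha\notin\mathbb{N}$ this is immediate from Hadamard factorization, and the exceptional cases $\alpha=1/n$ are checked directly, e.g.\ $E_{1/2,1}(z)=e^{z^2}\operatorname{erfc}(-z)$ has infinitely many zeros). Since $h$ is non-constant and entire, Picard's theorem shows $h$ omits at most one complex value, so $h$ must attain some zero $z_0$ of $E_{\alpha,1}$, yielding $u_0$ with $\Phi(u_0,t)=E_{\alpha,1}(z_0)=0$. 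This contradicts the zero-freeness forced by infinite divisibility, so $\hat M^\alpha(t)$ is not infinitely divisible. The only routine points left to verify are that $h$ is genuinely non-constant for all admissible rates and that the cited results on the zeros of $E_{\alpha,1}$ indeed cover the whole range $0<\alpha<1$.
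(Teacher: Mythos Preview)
Your argument is correct and takes a genuinely different route from the paper. The paper proceeds probabilistically: it uses the self-similarity $\hat M^\alpha(t)\overset{d}{=}\hat M(t^\alpha Y^\alpha(1))$ together with Lemma~\ref{lemas} to show that $\hat M^\alpha(t)/t^\alpha\to S\,Y^\alpha(1)$ in distribution, and then argues that if each $\hat M^\alpha(t)$ were infinitely divisible, so would be this limit, contradicting the known fact (Vellaisamy and Kumar (2018)) that $Y^\alpha(1)$ is not infinitely divisible. Your approach is analytic: you exploit that the pgf is $E_{\alpha,1}\circ h$ with $h$ entire and non-constant, show via the entire-$n$th-root argument that infinite divisibility would force the pgf to be zero-free on all of $\mathbb{C}$, and then produce a zero by combining the infinitely many zeros of $E_{\alpha,1}$ for $0<\alpha<1$ with Picard's little theorem. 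The paper's proof is shorter and modular, delegating the hard work to the cited result on $Y^\alpha(1)$; your proof is self-contained modulo classical complex analysis and standard facts about Mittag-Leffler zeros, and it makes transparent \emph{why} the Mittag-Leffler structure of the pgf obstructs infinite divisibility. The points you flag as routine---non-constancy of $h$ (immediate since some $\lambda_j,\mu_{j_0}>0$) and the existence of zeros of $E_{\alpha,1}$ for every $0<\alpha<1$ (classical, including the cases $\alpha=1/n$)---are indeed routine.
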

\begin{proof}
By using the self-similarity property of inverse $\alpha$-stable subordinator, we have
\begin{equation*}
\hat{M}^\alpha(t)\overset{d}{=}\hat{M}(t^\alpha Y^\alpha(1)).
\end{equation*}
Thus, 
\begin{align*}
\lim_{t\to\infty}\frac{\hat{M}^\alpha(t)}{t}&\overset{d}{=}\lim_{t\to\infty}\frac{\hat{M}(t^\alpha Y^\alpha(1))}{t^\alpha}\\
&=Y^\alpha(1)\lim_{t\to\infty}\frac{\hat{M}(t^\alpha Y^\alpha(1))}{t^\alpha Y^\alpha(1)}\\
&\overset{d}{=}Y^\alpha(1)\sum_{j=1}^{k}j\lambda_j\sum_{j_0=1}^{k_0}j_0\mu_{j_0},
\end{align*}
where the last step follows by using Lemma \ref{lemas}. Let us assume that $\{\hat{M}^\alpha(t)\}_{t\ge0}$ is infinitely divisible. Then, $\hat{M}^\alpha(t)/t^\alpha$ is also infinitely divisible. Consequently, $Y^\alpha(1)$ is infinitely divisible since $\lim_{t\to\infty}\hat{M}^\alpha(t)/t^\alpha$ is infinitely divisible which follows from a result of Steutel and van Harn (2004), p. 94. This leads to a contradiction as
$Y^\alpha(1)$ is not infinitely divisible (see Vellaisamy and Kumar (2018)).
\end{proof}
\begin{proposition}
Let $r\ge1$. The $r$th factorial moment $\Phi^\alpha(r,t)=\mathbb{E}(\hat{M}^\alpha(t)(\hat{M}^\alpha(t)-1)\dots(\hat{M}^\alpha(t)-r+1))$ of time-changed IGCP is given by
\begin{equation*}
\Phi^\alpha(r,t)=\sum_{n=1}^{r}\frac{r!}{\Gamma(n\alpha+1)}t^{n\alpha}\sum_{\substack{\sum_{l=1}^{n}m_l=r\\m_l\in\mathbb{N}}}\prod_{l=1}^{n}\sum_{j_0=1}^{k_0}\mu_{j_0}\sum_{s=0}^{\infty}\frac{j_0^s}{s!}\sum_{\substack{\sum_{i=1}^{s}x_i=m_l\\x_i\in\mathbb{N}_0}}\prod_{i=1}^{s}\frac{1}{x_i!}\sum_{j=1}^{k}\lambda_j(j)_{x_i},
\end{equation*}
where $(j)_{x_i}=j(j-1)\dots(j-x_i+1)$ denotes the falling factorial.
\end{proposition}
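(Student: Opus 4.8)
The plan is to read off the factorial moments from the probability generating function, using $\Phi^\alpha(r,t)=\frac{\partial^r}{\partial u^r}\mathbb{E}(u^{\hat{M}^\alpha(t)})\big|_{u=1}$. Writing $g(u)=\sum_{j_0=1}^{k_0}\mu_{j_0}t^\alpha\big(\exp(j_0\sum_{j=1}^{k}\lambda_j(u^j-1))-1\big)$, the pgf obtained just above is $\mathbb{E}(u^{\hat{M}^\alpha(t)})=E_{\alpha,1}(g(u))$. The key structural observation is that $g(1)=0$, since each inner exponent vanishes at $u=1$. First I would insert the series $E_{\alpha,1}(x)=\sum_{n\ge0}x^n/\Gamma(n\alpha+1)$ and differentiate term by term, reducing the problem to evaluating $\frac{\partial^r}{\partial u^r}g(u)^n\big|_{u=1}$ for each $n$.

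For the $r$th derivative of the power $g^n$ I would apply the general Leibniz (multinomial) rule, $\frac{\partial^r}{\partial u^r}g(u)^n=\sum_{m_1+\dots+m_n=r}\binom{r}{m_1,\dots,m_n}\prod_{l=1}^{n}g^{(m_l)}(u)$. Evaluating at $u=1$ and using $g(1)=0$ forces every part $m_l$ to be strictly positive, because any $m_l=0$ contributes the factor $g(1)=0$; in particular only $1\le n\le r$ survive, which yields the outer sum $\sum_{n=1}^{r}$ together with the prefactor $r!/\Gamma(n\alpha+1)$ after collecting $\binom{r}{m_1,\dots,m_n}=r!/\prod_l m_l!$ and the $1/\Gamma(n\alpha+1)$ from the Mittag-Leffler series. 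Since $g^{(m_l)}(1)=t^\alpha\sum_{j_0}\mu_{j_0}f_{j_0}^{(m_l)}(1)$ with $f_{j_0}(u)=\exp(j_0\sum_j\lambda_j(u^j-1))$, this step produces the factor $t^{n\alpha}$ and leaves the per-part quantities $\frac{1}{m_l!}\sum_{j_0}\mu_{j_0}f_{j_0}^{(m_l)}(1)$ to be identified.

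The remaining task is to expand $f_{j_0}^{(m_l)}(1)/m_l!$. Here I would use the power series of the exponential, $f_{j_0}(u)=\sum_{s\ge0}\frac{j_0^s}{s!}\big(\sum_j\lambda_j(u^j-1)\big)^s$, together with the fact that $\frac{1}{x!}\sum_{j=1}^{k}\lambda_j(j)_x$ is exactly the Taylor coefficient of $\sum_j\lambda_j u^j$ at $u=1$, where $(j)_x$ is the falling factorial (indeed $\frac{d^x}{du^x}u^j\big|_{u=1}=(j)_x$). A second application of the multinomial theorem to $\big(\sum_j\lambda_j(u^j-1)\big)^s$ then expresses the coefficient of $(u-1)^{m_l}$ as $\sum_{x_1+\dots+x_s=m_l}\prod_{i=1}^{s}\frac{1}{x_i!}\sum_{j}\lambda_j(j)_{x_i}$, and dividing by $m_l!$ absorbs the outer multinomial coefficient. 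Assembling the two expansions reproduces the stated triple inner sum, hence the claimed formula.

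The main obstacle I anticipate is the bookkeeping through the two nested compositions $E_{\alpha,1}\circ\exp\circ(\text{polynomial})$, and in particular the positivity constraints on the exponents. The vanishing $g(1)=0$ must be used to discard the $m_l=0$ parts in the first expansion; in the inner expansion the parts $x_i$ are likewise \emph{strictly positive} (so effectively $1\le s\le m_l$, the formal upper limit $s=\infty$ being harmless since larger $s$ give empty index sets), because only genuine derivatives $\psi^{(x_i)}(1)=\sum_j\lambda_j(j)_{x_i}$ with $x_i\ge1$ arise. Keeping these constraints straight, rather than any single hard estimate, is where the care lies; once the falling-factorial/Taylor-coefficient identification is made, the rest is routine collection of multinomial coefficients.
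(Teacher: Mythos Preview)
Your proposal is correct and follows essentially the same route as the paper. The paper phrases the outer composition via Fa\`a di Bruno's formula (Johnson (2002), Eq.~(3.3)) and the derivative of a power via Johnson's Eq.~(3.6), whereas you expand the Mittag--Leffler series directly and apply the general Leibniz/multinomial rule twice; these are equivalent formulations of the same computation, with the same key observation that $g(1)=0$ kills the $m_l=0$ parts and restricts to $1\le n\le r$. Your remark that the inner parts $x_i$ are effectively strictly positive (so only finitely many $s$ contribute) is a correct observation about the structure of the sum, even though the stated formula writes $x_i\in\mathbb{N}_0$.
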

\begin{proof}
We have
$
\Phi^\alpha(r,t)=\frac{\partial^r}{\partial u^r}\mathbb{E}(u^{\hat{M}^\alpha(t)})|_{u=1}.
$
By using the $r$th derivative of the composition of two functions (see Johnson (2002),
Eq. (3.3)), we get
\begin{align}
\Phi^\alpha(r,t)&=\sum_{n=0}^{r}\frac{1}{n!}E_{\alpha,1}^{(n)}\Big(\sum_{j_0=1}^{k_0}\mu_{j_0}t^\alpha\Big(\exp\Big(j_0\sum_{j=1}^{k}\lambda_j(u^j-1)\Big)-1\Big)\Big)\nonumber\\
&\hspace{4cm} \cdot B_{r,n}\Big(\sum_{j_0=1}^{k_0}\mu_{j_0}t^\alpha\Big(\exp\Big(j_0\sum_{j=1}^{k}\lambda_j(u^j-1)\Big)-1\Big)\Big)\Big|_{u=1},\label{nnnn}
\end{align}
where 
\begin{align}\label{mmmm}
E_{\alpha,1}^{(n)}\Big(\sum_{j_0=1}^{k_0}\mu_{j_0}t^\alpha\Big(\exp&\Big(j_0\sum_{j=1}^{k}\lambda_j(u^j-1)\Big)-1\Big)\Big)\Big|_{u=1}\nonumber\\
&=n!E_{\alpha,n\alpha+1}^{n+1}\Big(\sum_{j_0=1}^{k_0}\mu_{j_0}t^\alpha\Big(\exp\Big(j_0\sum_{j=1}^{k}\lambda_j(u^j-1)\Big)-1\Big)\Big)\Big|_{u=1}\nonumber\\
&=\frac{n!}{\Gamma(n\alpha+1)}
\end{align}
and 
\begin{align*}
B_{r,n}\Big(&\sum_{j_0=1}^{k_0}\mu_{j_0}t^\alpha\Big(\exp\Big(j_0\sum_{j=1}^{k}\lambda_j(u^j-1)\Big)-1\Big)\Big)\Big|_{u=1}\\
&=\sum_{m=0}^{n}\binom{n}{m}\Big(-\sum_{j_0=1}^{k_0}\mu_{j_0}t^\alpha\Big(\exp\Big(j_0\sum_{j=1}^{k}\lambda_j(u^j-1)\Big)-1\Big)\Big)^{n-m}\\
&\hspace{5.7cm} \cdot\frac{\mathrm{d}^r}{\mathrm{d}t^r}\Big(\sum_{j_0=1}^{k_0}\mu_{j_0}t^\alpha\Big(\exp\Big(j_0\sum_{j=1}^{k}\lambda_j(u^j-1)\Big)-1\Big)\Big)^{m}\Big|_{u=1}\\
&=\frac{\mathrm{d}^r}{\mathrm{d}t^r}\Big(\sum_{j_0=1}^{k_0}\mu_{j_0}t^\alpha\Big(\exp\Big(j_0\sum_{j=1}^{k}\lambda_j(u^j-1)\Big)-1\Big)\Big)^{m}\Big|_{u=1}.
\end{align*}
Now, by using Eq. (3.6) of Johnson (2002),  we get
\begin{align*}
\frac{\mathrm{d}^r}{\mathrm{d}t^r}&\Big(\sum_{j_0=1}^{k_0}\mu_{j_0}t^\alpha\Big(\exp\Big(j_0\sum_{j=1}^{k}\lambda_j(u^j-1)\Big)-1\Big)\Big)^{m}\Big|_{u=1}\\
&=r!t^{n\alpha}\sum_{\substack{\sum_{l=1}^{n}m_l=r\\m_l\in\mathbb{N}_0}}\prod_{l=1}^{n}\frac{1}{m_l!}\frac{\mathrm{d}^{m_l}}{\mathrm{d}u^{m_l}}\Big(\sum_{j_0=1}^{k_0}\mu_{j_0}\exp\Big(j_0\sum_{j=1}^{k}\lambda_j(u^j-1)\Big)\Big)\Big|_{u=1}\\
&=r!t^{n\alpha}\sum_{\substack{\sum_{l=1}^{n}m_l=r\\m_l\in\mathbb{N}_0}}\prod_{l=1}^{n}\frac{1}{m_l!}\sum_{j_0=1}^{k_0}\mu_{j_0}\sum_{s=0}^{\infty}\frac{j_0^s}{s!}\frac{\mathrm{d}^{m_l}}{\mathrm{d}u^{m_l}}\Big(\sum_{j=1}^{k}\lambda_j(u^j-1)\Big)^s\Big|_{u=1}\\
&=r!t^{n\alpha}\sum_{\substack{\sum_{l=1}^{n}m_l=r\\m_l\in\mathbb{N}_0}}\prod_{l=1}^{n}\frac{1}{m_l!}\sum_{j_0=1}^{k_0}\mu_{j_0}\sum_{s=0}^{\infty}\frac{j_0^s}{s!}m_l!\sum_{\substack{\sum_{i=1}^{s}x_i=m_l\\x_i\in\mathbb{N}_0}}\prod_{i=1}^{s}\frac{1}{x_i!}\frac{\mathrm{d}^{x_i}}{\mathrm{d}u^{x_i}}\Big(\sum_{j=1}^{k}\lambda_j(u^j-1)\Big)\Big|_{u=1}\\
&=r!t^{n\alpha}\sum_{\substack{\sum_{l=1}^{n}m_l=r\\m_l\in\mathbb{N}_0}}\prod_{l=1}^{n}\frac{1}{m_l!}\sum_{j_0=1}^{k_0}\mu_{j_0}\sum_{s=0}^{\infty}\frac{j_0^s}{s!}m_l!\sum_{\substack{\sum_{i=1}^{s}x_i=m_l\\x_i\in\mathbb{N}_0}}\prod_{i=1}^{s}\frac{1}{x_i!}\sum_{j=1}^{k}\lambda_j(j)_{x_i}.
\end{align*}
Thus, 
\begin{align}
B_{r,n}\Big(&\sum_{j_0=1}^{k_0}\mu_{j_0}t^\alpha\Big(\exp\Big(j_0\sum_{j=1}^{k}\lambda_j(u^j-1)\Big)-1\Big)\Big)\Big|_{u=1}\nonumber\\
&=r!t^{n\alpha}\sum_{\substack{\sum_{l=1}^{n}m_l=r\\m_l\in\mathbb{N}_0}}\prod_{l=1}^{n}\frac{1}{m_l!}\sum_{j_0=1}^{k_0}\mu_{j_0}\sum_{s=0}^{\infty}\frac{j_0^s}{s!}m_l!\sum_{\substack{\sum_{i=1}^{s}x_i=m_l\\x_i\in\mathbb{N}_0}}\prod_{i=1}^{s}\frac{1}{x_i!}\sum_{j=1}^{k}\lambda_j(j)_{x_i}.\label{mnmn}
\end{align}
Finally, the result follows on substituting \eqref{mmmm} and \eqref{mnmn} in \eqref{nnnn}. 
\end{proof}

\section*{Appendix}
\paragraph{A1}{\textit{Proof of Proposition \ref{prpde}}:}
From \eqref{Ipgfde}, we have
\begin{align}
	\frac{\partial}{\partial t}\hat{G}(u,t)&=\Big(-\mu+\sum_{j_0=1}^{k_0}\mu_{j_0}e^{-j_0\lambda}\sum_{r=0}^{\infty}j_0^r\sum_{\sum_{j=1}^{k}x_j=r}\prod_{j=1}^{k}\frac{(\lambda_{j}u^{j})^{x_{j}}}{x_{j}!}\Big)\hat{G}(u,t)\nonumber\\
	&=-\mu\sum_{n\ge0}u^n\hat{p}(n,t)+\sum_{j_0=1}^{k_0}\mu_{j_0}e^{-j_0\lambda}\sum_{\substack{x_{j}\ge0\\1\le j\le k}}\Big(\prod_{j=1}^{k}\frac{(j_0\lambda_{j}u^{j})^{x_{j}}}{x_{j}!}\Big)\sum_{n\ge0}u^n\hat{p}(n,t)\nonumber\\
	&=\sum_{n\ge0}u^n\Big(-\mu\hat{p}(n,t)+\sum_{j_0=1}^{k_0}\mu_{j_0}e^{-j_0\lambda}\sum_{m=0}^{n}\sum_{\Omega(k,m)}\Big(\prod_{j=1}^{k}\frac{(j_0\lambda_{j})^{x_{j}}}{x_{j}!}\Big)\hat{p}(n-m,t)\Big).\label{Am}
\end{align}
Also, we have
\begin{equation}\label{AA}
	\frac{\partial}{\partial t}\hat{G}(u,t)=\sum_{n\ge0}u^n\frac{\mathrm{d}}{\mathrm{d}t}\hat{p}(n,t).
\end{equation}
Finally, on comparing the coefficient of $u^n$ over the range $n\ge0$ on both sides of \eqref{Am} and \eqref{AA}, we get the required result.
\vspace{.5cm}
\paragraph{A2}{\textit{Proof of Proposition \ref{thmappen}}:}
From \eqref{MIGCPpgfde}, we have
\begin{align}
	\frac{\partial}{\partial t}G_{\bar{\mathcal{M}}}(\bar{u},t)&=\Big(-\mu +\sum_{j_0=1}^{k_0}\mu_{j_0}e^{-j_0\lambda}\sum_{r=0}^{\infty}\frac{(r\sum_{i=1}^{q}\sum_{j_i=1}^{k_i}\lambda_{ij_i}u_i^{j_i})^r}{r!}\Big)G_{\bar{\mathcal{M}}}(\bar{u},t)\nonumber\\
	&=\Big(-\mu +\sum_{j_0=1}^{k_0}\mu_{j_0}e^{-j_0\lambda}\sum_{r=0}^{\infty}\frac{j_0^r}{r!}\sum_{r_1+r_2+\dots+r_q=r}r!\prod_{i=1}^{q}\frac{(\sum_{j_i=1}^{k_i}\lambda_{ij_i}u_i^{j_i})^{r_i}}{r_i!}\Big)G_{\bar{\mathcal{M}}}(\bar{u},t)\nonumber\\
	&=\Big(-\mu +\sum_{j_0=1}^{k_0}\mu_{j_0}e^{-j_0\lambda}\sum_{\substack{r_i\ge0\\i=1,2,\dots,q}}\prod_{i=1}^{q}j_0^{r_i}\sum_{x_{i1}+x_{i2}+\dots+x_{ik_i}=r_i}\prod_{j_i=1}^{k_i}\frac{(\lambda_{ij_i}u_i^{j_i})^{x_{ij_i}}}{x_{ij_i}!}\Big)G_{\bar{\mathcal{M}}}(\bar{u},t)\nonumber\\ 
	&=\Big(-\mu +\sum_{j_0=1}^{k_0}\mu_{j_0}e^{-j_0\lambda}\sum_{\substack{m_i\ge0\\i=1,2,\dots,q}}\sum_{\substack{\Omega(k_i,m_i)\\i=1,2,\dots,q}}\prod_{i=1}^{q}u_i^{m_i}\prod_{j_i=1}^{k_i}\frac{(j_0\lambda_{ij_i})^{x_{ij_i}}}{x_{ij_i}!}\Big)G_{\bar{\mathcal{M}}}(\bar{u},t)\nonumber\\ 
	&=-\mu \sum_{\bar{n}\ge\bar{0}}\Big(\prod_{i=1}^{q}u_i^{n_i}\Big)p_{\bar{\mathcal{M}}}(\bar{n},t)+\sum_{j_0=1}^{k_0}\mu_{j_0}e^{-j_0\lambda}\nonumber\\
	&\hspace{2.5cm}\cdot\sum_{\substack{m_i\ge0\\i=1,2,\dots,q}}\sum_{\substack{\Omega(k_i,m_i)\\i=1,2,\dots,q}}\Big(\prod_{i=1}^{q}\prod_{j_i=1}^{k_i}\frac{(j_0\lambda_{ij_i})^{x_{ij_i}}}{x_{ij_i}!}\Big)\sum_{\bar{n}\ge\bar{0}}\Big(\prod_{i=1}^{q}u_i^{m_i+n_i}\Big)p_{\bar{\mathcal{M}}}(\bar{n},t)\nonumber\\ 
	&=-\mu\sum_{\bar{n}\ge\bar{0}}\Big(\prod_{i=1}^{q}u_i^{n_i}\Big) p_{\bar{\mathcal{M}}}(\bar{n},t)+\sum_{\bar{n}\ge\bar{0}}\Big(\prod_{i=1}^{q}u_i^{n_i}\Big)\sum_{j_0=1}^{k_0}\mu_{j_0}e^{-j_0\lambda}\nonumber\\
	&\hspace{5cm}\cdot \sum_{\substack{m_i\ge0\\i=1,2,\dots,q}}\sum_{\substack{\Omega(k_i,m_i)\\i=1,2,\dots,q}}\Big(\prod_{i=1}^{q}\prod_{j_i=1}^{k_i}\frac{(j_0\lambda_{ij_i})^{x_{ij_i}}}{x_{ij_i}!}\Big)p_{\bar{\mathcal{M}}}(\bar{n}-\bar{m},t). \label{A}
\end{align}
Also, we have 
\begin{equation}\label{AAbn}
	\frac{\partial}{\partial t}G_{\bar{\mathcal{M}}}(\bar{u},t)=\sum_{\bar{n}\ge\bar{0}}\Big(\prod_{i=1}^{q}u_i^{n_i}\Big)\frac{\mathrm{d}}{\mathrm{d}t} p_{\bar{\mathcal{M}}}(\bar{n},t).
\end{equation}
Finally, the result follows on comparing the coefficient of $u_1^{n_1}u_2^{n_2}\dots u_q^{n_q}$ over the range $\bar{n}\ge\bar{0}$ on both sides of \eqref{A} and \eqref{AAbn}.

\vspace{.5cm}
\paragraph{A3}{\textit{Proof of Proposition \ref{prpcgpart}}:}
By using \eqref{IGDCpgf}, we get
\begin{align*}
	\mathbb{E}(u^{D(t)})&=\exp\Big(-t\sum_{j_0=1}^{k_0}\mu_{j_0}\Big(1-\exp\Big(-j_0\sum_{j=1}^{k}\lambda_j(1-(\mathbb{E}(u^X))^j)\Big)\Big)\Big)\\
	&=\exp\Big(-t\sum_{j_0=1}^{k_0}\mu_{j_0}\Big(1-\exp\Big(-j_0\sum_{j=1}^{k}\lambda_j(1-\mathbb{E}(u^{X_1+X_2+\dots+X_j}))\Big)\Big)\Big)\\
	&=\exp\Big(-t\sum_{j_0=1}^{k_0}\mu_{j_0}\Big(1-\exp\Big(-j_0\sum_{j=1}^{k}\lambda_j\Big(1-\sum_{i=0}^{\infty}u^i\mathrm{Pr}\{X_1+\dots+X_j=i\}\Big)\Big)\Big)\Big)\\
	&=\exp\Big(-t\sum_{j_0=1}^{k_0}\mu_{j_0}\Big(1-\exp\Big(-j_0\sum_{j=1}^{k}\lambda_j\Big(1-\sum_{i=0}^{\infty}u^i\sum_{\substack{\sum_{m=1}^{j}r_m=i\\r_m\in\mathbb{N}_0}}\alpha_{r_1}\alpha_{r_2}\dots\alpha_{r_j}\Big)\Big)\Big)\Big)\\
	&=\exp\Big(-t\sum_{j_0=1}^{k_0}\mu_{j_0}\Big(1-\exp\Big(-j_0\sum_{j=1}^{k}\lambda_j\Big(1-\sum_{i=0}^{\infty}u^i\alpha^{*(j)}_i\Big)\Big)\Big)\Big)\\
	&=\exp\Big(-t\sum_{j_0=1}^{k_0}\mu_{j_0}\Big(1-\exp\Big(-j_0\sum_{j=1}^{k}\lambda_j\sum_{i=0}^{\infty}\alpha^{*(j)}_i(1-u^i)\Big)\Big)\Big),\, \text{(as $\sum_{i=0}^{\infty}\alpha^{*(j)}_i=1$)}\\
	&=\exp\Big(-t\sum_{j_0=1}^{k_0}\mu_{j_0}\Big(1-\exp\Big(-j_0\sum_{j=1}^{k}\lambda_j\sum_{i=1}^{\infty}\alpha^{*(j)}_i(1-u^i)\Big)\Big)\Big).
\end{align*}
This completes the proof.
\vspace{.5cm}
\paragraph{A4}{\textit{Proof of Theorem \ref{thmIgfpmf}}:}
Let $h(x,t)$ be the density of inverse $\alpha$-stable subordinator and $\hat{p}(n,x)$ be the pmf of IGCP. Then, from \eqref{IGFCPDEF}, we have
{\scriptsize\begin{align}
	\hat{q}^\alpha(n,t)&=\int_{0}^{\infty}\hat{p}(n,s)h(s,t)\,\mathrm{d}s\nonumber\\
	&=\sum_{\Omega(k,n)}\Big(\prod_{j=1}^{k}\frac{\lambda_{j}^{n_{j}}}{n_{j}!}\Big)\sum_{\sum_{j_0=1}^{k_0}r_{j_0}=z_{k}}z_{k}!\Big(\prod_{j_0=1}^{k_0}\frac{j_0^{r_{j_0}}}{r_{j_0}!}\Big)\sum_{\substack{x_{j_0}\ge0\\1\le j_0\le k_0}}\Big(\prod_{j_0=1}^{k_0}\frac{x_{j_0}^{r_{j_0}}(\mu_{j_0}e^{-j_0\lambda})^{x_{j_0}}}{x_{j_0}!}\Big)\int_{0}^{\infty}s^{z_{k_0}}e^{-\mu s}h(s,t)\mathrm{d}s\label{MKLN}
\end{align}}
On taking Laplace transform on both sides of \eqref{MKLN}, we get
{\scriptsize\begin{align*}
\tilde{\hat{q}}(n,w)&=\sum_{\Omega(k,n)}\Big(\prod_{j=1}^{k}\frac{\lambda_{j}^{n_{j}}}{n_{j}!}\Big)\sum_{\sum_{j_0=1}^{k_0}r_{j_0}=z_{k}}z_{k}!\Big(\prod_{j_0=1}^{k_0}\frac{j_0^{r_{j_0}}}{r_{j_0}!}\Big)\sum_{\substack{x_{j_0}\ge0\\1\le j_0\le k_0}}\Big(\prod_{j_0=1}^{k_0}\frac{x_{j_0}^{r_{j_0}}(\mu_{j_0}e^{-j_0\lambda})^{x_{j_0}}}{x_{j_0}!}\Big)\int_{0}^{\infty}s^{z_{k_0}}e^{-\mu s}w^{\alpha-1}e^{-sw^\alpha}\mathrm{d}s\\
&=\sum_{\Omega(k,n)}\Big(\prod_{j=1}^{k}\frac{\lambda_{j}^{n_{j}}}{n_{j}!}\Big)\sum_{\sum_{j_0=1}^{k_0}r_{j_0}=z_{k}}z_{k}!\Big(\prod_{j_0=1}^{k_0}\frac{j_0^{r_{j_0}}}{r_{j_0}!}\Big)\sum_{\substack{x_{j_0}\ge0\\1\le j_0\le k_0}}\Big(\prod_{j_0=1}^{k_0}\frac{x_{j_0}^{r_{j_0}}(\mu_{j_0}e^{-j_0\lambda})^{x_{j_0}}}{x_{j_0}!}\Big)\frac{w^{\alpha-1}\Gamma(z_{k_0}+1)}{(\mu+w^\alpha)^{z_{k_0}+1}}.
\end{align*}}
Now, on taking the inverse Laplace transform on both sides of the above equation, we get
{\scriptsize
\begin{equation*}
	\hat{q}^\alpha(n,t)=\sum_{\Omega(k,n)}\Big(\prod_{j=1}^{k}\frac{\lambda_{j}^{n_{j}}}{n_{j}!}\Big)\sum_{\sum_{j_0=1}^{k_0}r_{j_0}=z_{k}}z_{k}!\Big(\prod_{j_0=1}^{k_0}\frac{j_0^{r_{j_0}}}{r_{j_0}!}\Big)\sum_{\substack{x_{j_0}\ge0\\1\le j_0\le k_0}}\Big(\prod_{j_0=1}^{k_0}\frac{x_{j_0}^{r_{j_0}}(\mu_{j_0}t^\alpha e^{-j_0\lambda})^{x_{j_0}}}{x_{j_0}!}\Big)z_{k_0}!E_{\alpha,\alpha z_{k_0}+1}^{z_{k_0}+1}(-\mu t^\alpha),
\end{equation*}}
where in the last step, we have used \eqref{mi}.
This completes the proof.

\vspace{.5cm}
\paragraph{A5}{\textit{Proof of Proposition \ref{prpdefIgfcp}}:}
By using \eqref{IGFCPDEF}, we have
\begin{equation*}
	\hat{q}^\alpha(n,t)=\sum_{m=0}^{\infty}\mathrm{Pr}\{M(m)=n\}p^\alpha(m,t).
\end{equation*}
So, by using \eqref{gfcpdeq}, we get
\begin{align*}
\frac{\mathrm{d}^\alpha}{\mathrm{d}t^\alpha}\hat{q}^\alpha(n,t)&=\sum_{m=0}^{\infty}\mathrm{Pr}\{M(m)=n\}\Big(-\sum_{j_0=1}^{k_0}\mu_{j_0}(p^\alpha(m,t)-p^\alpha(m-j_0,t))\Big)\\
&=-\sum_{j_0=1}^{k_0}\mu_{j_0}\hat{q}^\alpha(n,t)+\sum_{j_0=1}^{k_0}\mu_{j_0}\mathrm{Pr}\{M(m)=n\}p^\alpha(m-j_0,t)\\
&=-\sum_{j_0=1}^{k_0}\mu_{j_0}\hat{q}^\alpha(n,t)+\sum_{j_0=1}^{k_0}\mu_{j_0}\sum_{r=0}^{\infty}\mathrm{Pr}\{M(j_0)=r\}\hat{q}^\alpha(n-r,t)
\end{align*}
which reduces to the required result on using \eqref{p(n,t)}.

\end{document}